\renewcommand{\k}{\Bbbk}
\newcommand{\ku}{\Bbbk}
\newcommand{\nc}{\newcommand}
\newcommand{\ot}{\otimes}
\newcommand{\ydh}{{}^{H}_{H}\mathcal{YD}}
\newcommand{\rg}{\rangle}
\renewcommand{\lg}{\langle}
\def\qb{\mathfrak{q}}
\newcommand{\bq}{\mathfrak{q}}
\newcommand{\ra}{\rightharpoonup}
\newcommand{\la}{\leftharpoonup}
\newcommand{\Hom}{\operatorname{Hom}}
\newcommand{\ex}{\operatorname{Exp}}
\newcommand{\id}{\operatorname{id}}
\newcommand{\img}{\operatorname{img}}
\newcommand{\can}{\operatorname{can}}
\newcommand{\Ss}{\mathcal{S}}
\newcommand{\Alg}{\operatorname{Alg}}
\newcommand{\Cleft}{\operatorname{Cleft}}
\newcommand{\co}{\operatorname{co}}
\renewcommand{\mod}{\operatorname{-mod}}
\newcommand{\bimod}{\operatorname{-bimod}}
\newcommand{\N}{\mathbb{N}}
\newcommand{\s}{\mathbb{S}}
\newcommand{\I}{\mathbb{I}}
\newcommand{\mE}{\mathcal{E}}
\renewcommand{\P}{\mathrm{P}}
\newcommand{\A}{\mathcal{A}}
\newcommand{\Bq}{\mathfrak{B}}
\newcommand{\eps}{\epsilon}
\def\pf{\begin{proof}}
\def\epf{\end{proof}}
\def\bs{\boldsymbol}
\renewcommand{\H}{\operatorname{H}}
\newcommand{\Z}{\operatorname{Z}}
\newcommand{\B}{\operatorname{B}}
\newcommand{\C}{\operatorname{C}}
\newcommand{\Dchaintwo}[3]{\xymatrix@C-4pt{\overset{#1}{\underset{\ }{\circ}}\ar
		@{-}[r]^{#2}
		& \overset{#3}{\underset{\ }{\circ}}}}
\nc{\ben}{\begin{enumerate}[(i)]}
\nc{\een}{\end{enumerate}}
\numberwithin{equation}{section}
\theoremstyle{plain}
\newtheorem{theorem}{Theorem}[section]
\newtheorem{lemma}[theorem]{Lemma}
\newtheorem{proposition}[theorem]{Proposition}
\newtheorem{corollary}[theorem]{Corollary}
\newtheorem{definition}[theorem]{Definition}
\newtheorem*{theoremintro}{Theorem}
\theoremstyle{remark}
\newtheorem{remark}[theorem]{Remark}
\newtheorem*{acknowledgement*}{Acknowledgement}
\newtheorem{example}[theorem]{Example}
\title[Explicit Hopf cocycles]{On the computation of Hopf 2-cocycles, with an example of diagonal type}
\author{Agust\'in Garc\'ia Iglesias}
\address{A.G.I.: FaMAF-CIEM (CONICET),
	Medina Allende S/N,
	Universidad Nacional de C\'ordoba,
	Ciudad Universitaria, C\'ordoba (X5000HUA),
	República Argentina. }
\email{agustingarcia@unc.edu.ar}
\author{José Ignacio Sánchez}
\address{J.I.S.: FaMAF-CIEM (CONICET),
	Medina Allende S/N,
	Universidad Nacional de C\'ordoba,
	Ciudad Universitaria, C\'ordoba (X5000HUA),
	República Argentina. }
\email{jose.ignacio.sanchez@mi.unc.edu.ar}
\begin{document}

\begin{abstract}
We present a framework for the computation of the Hopf 2-cocycles involved in the deformations of Nichols algebras over semisimple Hopf algebras. We write down a recurrence formula and investigate the extent of the connection with invariant Hochschild cohomology in terms of exponentials. 

As an example, we present detailed computations leading to the explicit description of the Hopf 2-cocycles involved in the deformations of a Nichols algebra of Cartan type $A_2$ with $q=-1$, a.k.a.~the positive part of the small quantum group $\mathfrak{u}^+_{\sqrt{\text{-1}}}(\mathfrak{sl}_3)$. We show that these cocycles are generically {\it pure}, that is they are not cohomologous to exponentials of Hochschild 2-cocycles.
\end{abstract}

\maketitle

\section{Introduction}\label{sec:intro}

Let $A=(A,m,\Delta)$ be a Hopf algebra over a field $\k$. A Hopf cocycle $\sigma$ with coefficients in $\k$ for $A$ is a linear map $\sigma\colon A\ot A\to \k$ which is convolution invertible and satisfies
\[
\sigma(x_{(1)}, y_{(1)}) \sigma(x_{(2)} y_{(2)}, z) =
\sigma(y_{(1)}, z_{(1)}) \sigma(x, y_{(2)}z_{(2)}), \quad x,y,z\in A.
\]

This condition allows to deform the multiplication $m\rightsquigarrow m_\sigma=\sigma\ast m\ast \sigma^{-1}$ into a new associative product, so that the {\it cocycle deformation} $A_\sigma=(A,m_\sigma,\Delta)$ is again a Hopf algebra.
Introduced by Doi and Takeuchi in \cite{DT}, Hopf cocycles have evolved  in recent years from a technical device to an inescapable and key tool in the classification program for non-semisimple Hopf algebras.

Indeed, the classification of finite-dimensional pointed Hopf algebras over abe\-lian groups is completely determined by Hopf cocycles. Each such Hopf algebra is a cocycle deformation of the bosonization $B\# H$, where  $B=\Bq(V)$ is a finite-dimensional Nichols algebra of diagonal type and $H=\k\Gamma$ is a commutative group algebra. This was first observed by Masuoka in \cite{Ma}, see also \cite{GrM}, for the Hopf algebras classified in \cite{AS}, and proved in full generality in \cite{AnG}. A similar statement is true for every known example of a finite-dimensional pointed Hopf algebra over $H=\k G$, $G$ non-necessarily abelian, cf.~\cite{GaM,GIM,GIV1,GIV2}. Moreover, the same holds for copointed Hopf algebras over $H=\k^G$, see \cite{AV,FGM,GIV2}.

It is important to remark that, despite the name, Hopf cocycles do not arise from a cohomology setup (unless the Hopf algebra is cocommutative, in which case there are no deformations in this sense).
In fact, the set $Z^2(A)$ of Hopf cocycles in $A$
is not a group. This is a major obstruction for a systematic construction of cocycles.
There is, however, a notion of cohomologous cocycles, via an action of the group of convolution units in $A^\ast$.

As a matter of fact, the classification results bypass an explicit description of the cocycles involved and work with the algebraic counterpart of cleft objects.
These are certain comodule algebras $C$ endowed with a convolution invertible comodule isomorphism $\gamma\colon A \to C$. Any pair $(C,\gamma)$ defines a Hopf cocycle via
\begin{align}\label{eqn:sigmagamma-intro}
\sigma(a,b)=\gamma(a_{(1)})\gamma(b_{(1)})\gamma^{-1}(a_{(2)}b_{(2)}), \qquad a,b\in A.
\end{align}

There is a more cohomological alternative to compute Hopf cocycles, rooted in the Hochschild cohomology $\H^{\bullet}(A,\k)$ of $A$, as some Hopf cocycles can be obtained as exponentials of Hochschild 2-cocycles in $\Z^2(A,\k)$.  We also investigate this approach, which turns out to be rather limited.
We say that a Hopf cocycle is {\it pure}  when it is not cohomologous to an exponential.
%Conversely, any Hopf 2-cocycle $\sigma$ determines a cleft object $A_{(\sigma)}$ so that the deformation $A_\sigma$ arises as the associated Schauenburg left Hopf algebra $L(A_{(\sigma)},A)$.
%
%In the classification program, an exhaustive family of cleft objects $(C_{\bs\lambda})_{\bs\lambda\in\Lambda}$ is defined for each graded Hopf algebra $A=B\# H$ in such a way that every deformation $A_{\bs\lambda}$ arises as $L(C_{\bs\lambda},A)$, which is presented by generators and relations as a quotient of $T(V)\# H$. In particular, $A_{\bs\lambda}$ is a cocycle deformation $A_\sigma$ of $A$.
%

We propose to explicitly compute the Hopf cocycles involved in the classification by writing down the section for each cleft object and calculating the values of $\sigma$ using \eqref{eqn:sigmagamma-intro}. Also, we determine which of these cocycles are pure and which are exponentials.

\subsection{Setting}\label{sec:setting}
Motivated by the state-of-the-art of the classification program just described, we restrict ourselves to the following context
\begin{itemize}[leftmargin=*]
	\item $A=B\# H$, where $H$ is a semisimple  Hopf algebra and $B\in\ydh$ is a (braided) graded Hopf algebra $B=\bigoplus_{n\geq 0} B_n$ with $B_0=\k$ and generated by $V\coloneqq B_1$. That is, $B$ is a pre-Nichols algebra.
	\item The Hopf cocycles $\sigma$ for $A$ belong to the set
	\[
	Z^2_0(A)\coloneqq\{\sigma\in Z^2(A) | \sigma_{|H\otimes H}=\eps\}.
	\]
	In particular, they are determined by the set
	\[
	Z^2(B)^H\coloneqq\{\sigma_B\in Z^2(B) | \sigma(h_{(1)}\cdot x,h_{(2)}\cdot y)=\eps(h)\sigma(x,y), \  h\in H, x,y\in B\}
	\]
	of $H$-linear cocycles $\sigma_B$ for $B$, as a braided bialgebra, see \eqref{eqn:hopfcocycle-cordfree}, so that
	\begin{align}\label{eqn:sigma-intro}
	\sigma(ah,bk)=\sigma_B(a,h\cdot b)\eps(k), \qquad a,b\in B, \ h,k\in H.
	\end{align}
	Thus, $\sigma_B=\sigma_{|B\otimes B}$ and $\sigma$ is determined by its values on a basis of $B$. We write $\sigma=\sigma_B\# \eps$, so $Z^2_0(A)\simeq Z^2(B)^H$. See \cite{AG} for details on braided cocycles.
\end{itemize}

We develop some tools that reduce the number of calculations we need to perform to compute the cocycles, see Lemma \ref{lem:descomposi}.

\c{S}tefan shows in \cite{St} that there is an action of $H$ on $\Z^\bullet(B,M)$, that descends to $\H^{\bullet}(B,M)$, $M\in B\bimod$, in such a way that $\H^\bullet(A,M)\simeq \H^\bullet(B,M)^H$. This action is not explicit in {\it loc.cit.} We write down \c{S}tefan's Theorem in our context as Theorem \ref{thm:stefan}: we present an element-based proof of it and show that the action is:
\begin{align*}
(h\ra f)({\bf x})=h_{(1)}f(\Ss(h_{(2)})\cdot {\bf x})\Ss(h_{(3)}),
\end{align*}
for $h\in H$, $f\in \Z^q(B,M)\subset\Hom_\k(B^{\ot\,q}, M)$ and ${\bf x}\in B^{\ot\,q}$; $\Ss$ denotes the antipode of $H$. See also \cite{CGW} for $H$ a group algebra.
As well, we show that we can restrict the study of $\eta\in\Z^2(A,\k)$ to cocycles in $\eta_B\in \Z^2(B,\k)^H$, {\it cf.}~ \eqref{eqn:sigma-intro}. Finally, we show in Proposition \ref{pro:invariance} that $e^\eta\in Z^2(B)^H$ if and only if $\eta\in \Z^2(B,\k)^H$.

Now, the cleft objects in the classification arise as $C=E\# H$, for $E$ a cleft object for $B$ in $H\mod$. In particular the section $\gamma$ is given by $\gamma=\gamma_B\#\id_H$, $\gamma_B\colon B\to E$. Formula \eqref{eqn:sigmagamma-intro} extends to this context, see \cite{AG}.

In this setting, our strategy runs as follows.
\begin{itemize}
	\item[(a)] Compute the section $\gamma_B\colon B\to E$ in a basis $\mathbb{B}$ of $B$.
	\item[(b)] Write down the table of values of $\sigma$ as in \eqref{eqn:sigmagamma-intro} in terms of $\mathbb{B}^2$.
	\item[(c)] Compute the space $\Z^2(B,\k)^H$ of $H$-invariant Hochschild 2-cocycles and find necessary and sufficient conditions so that $e^\eta$ is a Hopf cocycle.
	\item[(d)] Determine those Hopf cocycles that are cohomologous to exponentials $e^\eta$ and those that are pure.
\end{itemize}

\subsection{An example of diagonal type}
We perform all of these steps for a braiding \[
\qb=\left(\begin{smallmatrix}
-1&q_{12}\\q_{21}&-1
\end{smallmatrix}\right)\] of Cartan type $A_2$, with parameter $q=q_{12}q_{21}=-1$, and a principal YD-realization, see \eqref{eqn:realization}, in $\ydh$ for a semisimple Hopf algebra $H$.
We recall that
\[\mathbb{B}=\{1,x_1,x_2,x_{12},x_2x_1, x_2x_{12}, x_{12}x_1, x_2x_{12}x_1\}
\] is a PBW basis for the Nichols algebra $B=\Bq_{\qb}$ and that the deformations $\mathfrak{u}_\qb(\bs\lambda)$ of $A=B\# H$ are classified by triplets $\bs\lambda=(\lambda_1,\lambda_2,\lambda_{12})\in\k^3$, subject to certain restrictions (depending on $H$), see \S \ref{sec:a2}.
Our main results are Theorems \ref{thm:sigma-a2} and \ref{thm:sigma-pure}, which can be summarized as follows.
\begin{theoremintro}%\label{teo:intro}
Let $\qb$, $H$ be as above.
Any Hopf cocycle $\sigma\in Z^2(\Bq_{\qb})^H$ is cohomologous to a cocycle
$\sigma_{\bs\lambda}$, $\bs\lambda=(\lambda_1,\lambda_2,\lambda_{12})$,
 as in the following table:
%\begin{center}
	\begin{table}[H]
		\resizebox{12cm}{!}
		{\begin{tabular}{|c|c|c|c|c|c|c|c|c|}
				\hline
				$\sigma_{\bs\lambda}$ &  $x_1$ & $x_2$ & $x_{12}$ & $x_2x_1$  &  $x_2x_{12}$ & $x_{12}x_1$ & $x_2x_{12}x_1$ \\
				\hline
				$x_1$   & $\lambda_1$ & $0$ & $0$ & $0$ & $\lambda_{12}$ & $0$ & $0$ \\
				\hline
				$x_2$  & $0$ & $\lambda_2$ & $0$ & $0$  &  $0$ & $2q_{12}\lambda_1\lambda_2$& $0$\\
				\hline
				$x_{12}$  & $0$ & $0$ & $\lambda_{12}$ & $0$ &  $0$ & $0$ & $0$ \\
				\hline
				$x_2x_1$  & $0$ & $0$ & $0$ & $-q_{21}\lambda_1\lambda_2$  &  $0$ & $0$ & $0$   \\
				\hline
				$x_2x_{12}$  & $0$ & $0$ & $0$ & $0$& $-q_{12}\lambda_2\lambda_{12}$ & $0$ & $0$  \\
				\hline
				$x_{12}x_1$  & $0$ & $2q_{12}\lambda_1\lambda_2 + \lambda_{12}$ & $0$ & $0$ &  $0$ & $q_{12}\lambda_{12}\lambda_1+4\lambda_1^2\lambda_2$ & $0$  \\
				\hline
				$x_2x_{12}x_1$ & $0$ & $0$ & $0$ & $0$  &  $0$ & $0$ & $q_{12}\lambda_2\lambda_{12}\lambda_1$ \\
				\hline
		\end{tabular}}
	\end{table}
%\end{center}
In particular, %for any $\bs\lambda\in \bs\Lambda$,
\[
\mathfrak{u}_\qb(\bs\lambda)\simeq (\Bq_{\qb}\# H)_{\sigma_{\bs\lambda}\# \eps}.
\]

In turn, the cocycle $\sigma_{\bs\lambda}$ is cohomologous to an exponential if and only if at most one of the parameters $\lambda_1,\lambda_2,\lambda_{12}$ is nonzero.\qed
\end{theoremintro}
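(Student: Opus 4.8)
The plan is to prove the two implications separately, using Proposition~\ref{pro:invariance} to convert "cohomologous to an exponential" into the existence of an $H$-invariant Hochschild cocycle $\eta\in\Z^2(\Bq_\qb,\k)^H$ together with a gauge. I would fix the basis $\eta_1,\eta_2,\eta_{12}$ of $\Z^2(\Bq_\qb,\k)^H$ produced in step (c), normalized so that on the PBW basis $\mathbb{B}$ their only nonzero generating values are $\eta_i(x_i,x_i)=1$ (the remaining values being forced by the Hochschild identity $\eta(ab,c)=\eta(a,bc)$ on the augmentation ideal), and write $\eta=c_1\eta_1+c_2\eta_2+c_{12}\eta_{12}$.

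For the "if" direction I would show that each one-parameter cocycle is already an exponential up to a coboundary. Since each $\eta_i$ is concentrated in a single bidegree, the convolution powers $\eta_i^{\ast n}$ terminate and $e^{\lambda_i\eta_i}$ can be read off directly from the recurrence. Specializing the table of Theorem~\ref{thm:sigma-a2} to a single nonzero parameter annihilates every product $\lambda_i\lambda_j$, so the targets $\sigma_{(\lambda_1,0,0)}$, $\sigma_{(0,\lambda_2,0)}$ and $\sigma_{(0,0,\lambda_{12})}$ are genuinely "abelian"; comparing them with the computed $e^{\lambda_i\eta_i}$ leaves only a coboundary discrepancy, which I would absorb by an explicit low-degree gauge $u$. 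The symmetry $1\leftrightarrow 2$ of $\qb$ collapses the first two cases into one, so only two short computations are required.

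The substance is the "only if" direction. Here my approach is to compute the full table of $e^{\eta}$ as polynomials in $(c_1,c_2,c_{12})$ and to compare it with $\sigma_{\bs\lambda}$ modulo the space $\B^2(\Bq_\qb)^H$ of coboundaries. Matching the linear part forces a correspondence $c_i\leftrightarrow\lambda_i$, after which all remaining values of $e^\eta$ are determined; its quadratic contributions, coming from $\tfrac12\,\eta\ast\eta$, produce a \emph{fixed} pattern of cross terms $c_ic_j$. The point I want to isolate is a gauge-invariant combination of the entries carrying the products $\lambda_1\lambda_2$, $\lambda_2\lambda_{12}$ and $\lambda_1\lambda_{12}$ — concretely the value on $(x_2x_1,x_2x_1)$, the value on $(x_2x_{12},x_2x_{12})$, and the $\lambda_1\lambda_{12}$-part of the value on $(x_{12}x_1,x_{12}x_1)$ — and to show that no exponential can realize the coefficients $-q_{21}\lambda_1\lambda_2$, $-q_{12}\lambda_2\lambda_{12}$ and $q_{12}\lambda_1\lambda_{12}$ dictated by Theorem~\ref{thm:sigma-a2} once two of the $\lambda_i$ are switched on. Equivalently, writing $\omega=\log_\ast\sigma_{\bs\lambda}$, the cocycle $\sigma_{\bs\lambda}$ is an exponential precisely when $\omega\in\Z^2(\Bq_\qb,\k)$, so the task is to detect, in a gauge-robust manner, the failure of $\omega$ to be Hochschild-closed and to trace it to these surviving products.

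The hard part will be controlling the gauge freedom: I must rule out that some coboundary, combined with the freedom in $(c_1,c_2,c_{12})$, conspires to cancel the quadratic obstruction. I would handle this by first computing $\B^2(\Bq_\qb)^H$ explicitly and reducing both $\sigma_{\bs\lambda}$ and $e^\eta$ to a normal form in which the cross-term entries become honest invariants, and then verifying that the normal form of $e^\eta$ never reproduces a nonzero $\lambda_i\lambda_j$. Pinning down this invariant combination — together with checking that the genuinely higher entries of the table, namely $-q_{21}\lambda_1\lambda_2$ on $(x_2x_1,x_2x_1)$, the cubic correction $4\lambda_1^2\lambda_2$ inside $(x_{12}x_1,x_{12}x_1)$, and $q_{12}\lambda_1\lambda_2\lambda_{12}$ on $(x_2x_{12}x_1,x_2x_{12}x_1)$, are incompatible with any exponential as soon as two parameters are nonzero — is the crux of the argument.
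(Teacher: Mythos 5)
Your ``only if'' direction rests on a conflation of the two different equivalence relations in play, and this is fatal to the plan. For Hopf 2-cocycles, ``cohomologous'' means lying in the same orbit of the multiplicative action \eqref{eqn:hopfcohomologous} of the group $U(\Bq_{\qb}^\ast)$; it does \emph{not} mean differing by an element of $\B^2(\Bq_{\qb},\k)^H$. Hence reducing $\sigma_{\bs\lambda}$ and $e^{\eta}$ to ``normal forms modulo coboundaries'' and comparing cross-term entries proves nothing about Hopf cohomology classes. The paper's own computations give a counterexample to the premise your reduction needs: $\beta\coloneqq\xi_{121}^2-\xi_{212}^1$ is a Hochschild coboundary by \eqref{eqn:cobordism}, and $e^{\beta}$ is cohomologous to $\eps$ (closing Remark of \S\ref{sec:a2}), yet $e^{\beta}-\eps$ is \emph{not} a Hochschild coboundary: by Lemma \ref{lem:hoch} every coboundary vanishes at $(x_2x_{12}x_1,x_2x_{12}x_1)$ because $(x_2x_{12}x_1)^2=0$, whereas $e^{\beta}(x_2x_{12}x_1,x_2x_{12}x_1)=-1$ (table in the proof of Lemma \ref{lem:converse}). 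So two cohomologous cocycles need not share your normal form, and the step you yourself single out as the crux --- ruling out that the gauge freedom cancels the quadratic obstruction --- cannot be carried out by computing $\B^2(\Bq_{\qb},\k)^H$.

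What the argument actually requires, and what the paper does, are two things your proposal omits. First, one must analyze the gauge equation $\alpha\ra\sigma_{\bs\lambda}=e^{\eta}$ itself: evaluating it on low-degree elements (Lemma \ref{lem:cuentaalpha}, Remark \ref{rem:alpha-grado1}, Lemma \ref{lem:alpha-sigma=eta}) forces $\alpha$ to vanish in degrees $1$ through $3$ and forces $\eta_1=\lambda_1$, $\eta_2=\lambda_2$, $\eta_{121}+\eta_{212}=\lambda_{12}+2q_{12}\lambda_1\lambda_2$; even your ``matching of linear parts'' needs this mechanism rather than coboundary reduction. Second, since $e^{\eta}$ lies in the $U(\Bq_{\qb}^\ast)$-orbit of a Hopf cocycle, $e^{\eta}$ must itself be a Hopf 2-cocycle, and by Lemma \ref{lem:converse} this happens precisely when $\eta_1\eta_2=\eta_1(\eta_{121}+\eta_{212})=\eta_2(\eta_{121}+\eta_{212})=0$; the obstruction appears at entries such as $(x_2x_1,x_{12})$, where the recursion of Lemma \ref{lem:descomposi} disagrees with the exponential, not in the diagonal quadratic entries you propose to use. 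Combining these two facts immediately yields $\lambda_1\lambda_2=\lambda_1\lambda_{12}=\lambda_2\lambda_{12}=0$. Two further inaccuracies: the normalized basis of $\Z^2(\Bq_{\qb},\k)^H$ you posit does not exist --- by Lemma \ref{lem:generadores-eta-abeliano} the invariant cocycles are spanned by $\xi_0,\xi_1^1,\xi_2^2,\xi_{121}^2,\xi_{212}^1$, and there is no invariant cocycle supported only at $(x_{12},x_{12})$, since the Hochschild identity forces values in bidegrees $(1,3)$, $(2,2)$, $(3,1)$; in particular your third generator is not concentrated in a single bidegree (its exponential still terminates, but by Remark \ref{rem:minimo}, not by your argument). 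Finally, your proposal is silent on the first two assertions of the statement: the table itself and the exhaustiveness of the family $\sigma_{\bs\lambda}$, which in the paper are Theorem \ref{thm:sigma-a2} (proved via the section of Lemma \ref{lem:gamma} and the recursion of Lemma \ref{lem:descomposi}) together with the classification of cleft objects.
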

This table was computed by the second author in his {\it Tesis de Licenciatura} \cite{S}.

\subsection{About the exponential}
%When $q_{12}\neq q_{21}$, the isomorphism classes of the algebras $(\mathfrak{u}_\qb(\bs\lambda))_{\bs\lambda}$ are parameterized by the classes $[\bs\lambda]=[\lambda_1:\lambda_{12}:\lambda_2]\in\mathbb{P}^2_{\k}$. When $q_{12}= q_{21}$, hence $q_{12}=\pm\sqrt{-1}$, this isomorphism still holds, but also the classes $[\lambda_1:\lambda_{12}:\lambda_2]$ and $[\lambda_2:\lambda_{12}:\lambda_1]$ yield isomorphic Hopf algebras. 
%In any case, 
The theorem above shows in particular that the exponential approach is far from optimal. Up to cohomology, only three non-trivial cocycles are exponentials, namely $\sigma_{(1,0,0)}, \sigma_{(0,1,0)}$ and $\sigma_{(0,0,1)}$.

On the other hand, it is unclear which conditions on $\eta$ are necessary to imply that the exponential $e^\eta$ is a Hopf cocycle. There are  well-known sufficient commutation conditions \eqref{eqn:conm1} and \eqref{eqn:conm2} in the convolution algebra $\Hom(A^{\ot 3},\k)$. As a byproduct of our calculations, we obtain in Lemma \ref{lem:converse} that
\begin{itemize}[leftmargin=*]
	\item There are Hochschild cocycles $\eta$ that fail \eqref{eqn:conm1}, \eqref{eqn:conm2} but still $e^\eta$ is a Hopf cocycle. 
	\item However, we show that in our examples such cocycles $\eta$ are cohomologous to a cocycle satisfying those conditions.
	We are not aware if this a general fact.
\end{itemize}

An important comment worth making is that,
depending on the realization and certain normalization conditions, a pure cocycle deformation $A_\sigma$ can be isomorphic to an exponential deformation $A_{e^\eta}$, see Remark \ref{rem:lambda=0}. Nevertheless, the corresponding cleft objects are not isomorphic.

Some of these observations have already been implicitly pointed in \cite{GaM}, for cocycles related to pointed Hopf algebras over $\s_4$.
Nevertheless, the exponential Hopf cocycles obtained in {\it loc.cit.}~considered a restricted class of Hochschild cocycles, namely  those that are concentrated in degree one. We hope to obtain a full description of the missing pure cocycles in future work.

On the other hand, those restrictions are enough to find all cocycles for pointed Hopf algebras over $\s_3$ {\it ibid.}, as deformations of the Fomin-Kirillov algebra $\mathcal{FK}_3$ and copointed Hopf algebras over the dihedral groups $\mathbb{D}_{4m}$ in \cite{FGM}.
In a forthcoming work \cite{GS}, we show that pure cocycles are indeed needed for pointed deformations of $\mathcal{FK}_3$ over more general groups  $G\twoheadrightarrow\s_3$, or for copointed deformations over $\k^{\s_3}$. 

It is also worth mentioning that, under certain conditions, some families of cocycles for the pointed Hopf algebras in \cite{AS} were described in \cite{MW}. The example we treat in this article escapes those restrictions.

\subsection{Organization}

The paper is organized as follows. In \S \ref{sec:prels}, we recall some preliminaries on Hopf algebras and deformations. In \S \ref{sec:tools} we develop some tools and techniques to compute the cocycles. In \S \ref{sec:hoch} we study the Hochschild cohomology of the algebras in our setting \S \ref{sec:setting}, in particular the structure of the set of $H$-invariant 2-cocycles. We also investigate the $H$-invariance introduced in \cite{St} and give an explicit description of the action.
We combine these analysis to study the exponential cocycles in \S \ref{sec:hopfschild}. Finally, we apply all of these steps in \S  \ref{sec:a2} where we compute in detail the cocycles associated to a deformation of Cartan type $A_2$.

\section*{Acknowledgments}
We thank Andrea Solotar for conversations in early stages of this work, particularly about the contents of \cite{St}.

\section{Preliminaries}\label{sec:prels}

We work over an algebraically closed field of characteristic zero $\k$. All tensor products, vector spaces and maps are assumed to be defined over $\k$. If $\theta\in\N$, we set $I_\theta=\{1,\dots,\theta\}\subset \N$.

We let $(H,m,\Delta)$ be a Hopf algebra; we use Sweedler's notation $\Delta(h)=h_{(1)}\ot h_{(2)}$ for the comultiplication, similarly for comodules over $H$ and the  coaction. We set $\Delta^{(2)}=(\Delta\ot\id)\Delta(=(\id\ot\Delta)\Delta)$.
We write $G(H)$ for the set of group-like elements on $H$, $\P(H)$ for the primitive elements and $U(H)$ for the units. We denote by $H_0$ the coradical of $H$. Recall that $H$ is pointed when $H_0=\k G(H)$ and copointed when $H_0=k^G$ for some (finite) non-abelian group $G$.

A (right) cleft object for $H$ is a right comodule algebra $C$ endowed with a convolution invertible comodule isomorphism $\gamma\colon H\to C$. This map is called a section when $\gamma(1)=1$. We let $\Cleft(H)$ denote the set of cleft objects up to isomorphism. Left, and bi-cleft, objects are defined similarly.

\subsection{Hopf cocycles}

A Hopf 2-cocycle $\sigma: H\ot  H \to \ku$ is a
convolution-invertible linear map $x\ot y\mapsto\sigma(x,y)$ satisfying
\begin{align}\label{eqn:hopfcocycle}
\sigma(x_{(1)}, y_{(1)}) \sigma(x_{(2)} y_{(2)}, z) &=
\sigma(y_{(1)}, z_{(1)}) \sigma(x, y_{(2)}z_{(2)}), \quad x,y,z\in H.
\end{align}
It follows that $\sigma(1,1)\neq 0$ and $\sigma(x, 1) = \sigma(1, x) = \eps(x)\sigma(1,1)$, $x\in H$, so by
\begin{align}\label{eqn:normalized}
\sigma\leftrightarrow \sigma(1,1)^{-1}\sigma
\end{align}
we can always assume that $\sigma(x, 1) = \sigma(1, x) = \eps(x)$. Such cocycles are called {\it normalized}.
We shall denote by
\[Z^2(H)=\{\sigma\colon H\ot H\to \k \ | \ \text{\eqref{eqn:hopfcocycle} holds}\}\] the set of normalized Hopf 2-cocycles in $H$.

We will use a coordinate-free version of \eqref{eqn:hopfcocycle}, namely
\begin{align}\label{eqn:hopfcocycle-cordfree}
(\sigma\ot\eps)\ast \sigma(m\ot\id)&=
(\eps\ot\sigma)\ast \sigma(\id\ot\, m).
\end{align}
This allows us to consider a braided version of Hopf cocycles for any bialgebra in a braided category; see \cite{AG} for details.

%
%\begin{remark} \cite[Montgomery]{M}
%	Assume $H=R\# H_0$. Then $\alpha\in H^\ast$ is invertible if and only if $\alpha_{|H_0}$ is.
%\end{remark}
%

\subsubsection{Cocycle deformations}

Let $\sigma\in Z^2(H)$. Then $\cdot_{\sigma}:H\ot H\rightarrow H$, given by
\begin{align}\label{eqn:cociclo-prod}
x\cdot_{\sigma}y &= \sigma(x_{(1)}, y_{(1)}) x_{(2)} y_{(2)}
\sigma^{-1}(x_{(3)}, y_{(3)}), \quad x, y \in H,
\end{align}
defines an associative product on the vector space $H$. Moreover,
the collection $(H,\cdot_\sigma,  \Delta)$ is a Hopf
algebra with antipode $\Ss_{\sigma} = f*\Ss*f^{-1}$, for $f =
\sigma\circ(\id\ot\,\Ss)\circ\Delta$. This Hopf algebra is denoted $H_\sigma$ and referred to as a {\it cocycle deformation of $H$}.

If $\sigma\in Z^2(H)$, then there is another deformation of the multiplication in $H$. Namely, $\cdot_{(\sigma)}:H\ot H\rightarrow H$, given by
\begin{align}\label{eqn:cociclo-prod-cleft}
x\cdot_{(\sigma)}y &= \sigma(x_{(1)}, y_{(1)}) x_{(2)} y_{(2)}, \quad x, y, \in H,
\end{align}
also defines an associative product on the vector space $H$; we denote this new algebra  by $H_{(\sigma)}$.
In this case,
the comultiplication $H\to H\otimes H$ defines coactions $H_{(\sigma)}\to H_{(\sigma)}\otimes H$
and $H_{(\sigma)}\to H_{\sigma}\otimes H_{(\sigma)}$ in such a way that $C=H_{(\sigma)}$ becomes a $(H_{\sigma},H)$-bicleft object.

Moreover, any (right) cleft object $C$ for $H$ (and hence any cocycle deformation $H_{\sigma}$) arises in this way. If $C$ is a cleft object with a normalized section $\gamma:H\to C$, then $C\simeq H_{(\sigma)}$; for the cocycle $\sigma\colon H\ot H\to \k$ determined by the formula
\begin{align}\label{eqn:formula-cleft}
\sigma(x,y)=\gamma(x_{(1)})\gamma(y_{(1)})\gamma^{-1}(x_{(2)}y_{(2)}), \quad x,y\in H.
\end{align}
We stress that formula \eqref{eqn:formula-cleft} extends to cleft objects in braided categories, see \cite{AG}.

Furthermore, it is possible to recover $H_{\sigma}$ completely in terms of the cleft object $C$, as the so-called Schauenburg left Hopf algebra $L(C,H)$; in particular, no cocycles are explicitly involved. See \cite{Sch} for further details.

\subsubsection{Cohomologous Hopf cocycles}

The group $U(H^*)$ of convolution units in $H^\ast$ acts on $Z^2(H)$ via
\begin{equation}\label{eqn:hopfcohomologous}
(\alpha\rightharpoonup\sigma)(x,y)=\alpha(x_{(1)})\alpha(y_{(1)})\sigma(x_{(2)},y_{(2)})\alpha^{-1}(x_{(3)}y_{(3)}).
\end{equation}
We write $\sigma\sim_\alpha\sigma'$ when $\sigma'=\alpha\rightharpoonup\sigma$
and set
\[
H^2(H)\coloneqq Z^2(H)/U(H^\ast)=\{[\sigma]:\sigma\in Z^2(H)\}.
\]

Moreover, if $\sigma'=\alpha\rightharpoonup\sigma$, then $H_\sigma\simeq H_{\sigma'}$ as Hopf algebras. However, the converse is not true, see Remark \ref{rem:taft} below. 

The equivalence does hold for $H$-cleft objects, namely $(C,\gamma)\simeq (C',\gamma')$ if and only if $[\sigma]=[\sigma']$, for $\sigma,\sigma'$ the Hopf 2-cocycles associated to $\gamma,\gamma'$ respectively as in \eqref{eqn:formula-cleft}, so that $C\simeq H_{(\sigma)}$ and $C'\simeq H_{(\sigma')}$, see \cite[Theorem 2.2]{D}. That is, there is an equivalence
\[
\Cleft(H)\longleftrightarrow H^2(H).
\]

We may always assume that such $\alpha$ as in \eqref{eqn:hopfcohomologous} is such that $\alpha(1)=1$.
Indeed, $c=\alpha(1)\neq 0$ as $\alpha$ is convolution invertible and
\[
c^{-1}\alpha\rightharpoonup \sigma=c^{-1}(\alpha\rightharpoonup \sigma)\sim_{c^{-1}\eps} \alpha\rightharpoonup \sigma.
\]

\begin{remark}\label{rem:taft}
	Two cocycle deformations $H_{\sigma}$ and $H_{\sigma'}$ may be isomorphic for two non-equivalent cocycles $\sigma,\sigma'\in Z^2(H)$. 
	
	We briefly develop an example. Let $H=\k\lg x,g:x^2=0, g^2=1, gx=-xg\rg$ be the Taft algebra and let $\sigma_\lambda$, $\lambda\in\k$, be the cocycle determined by $\sigma_\lambda(x,x)=\lambda$ and 
	\[
	\sigma_\lambda(x^a g^i, x^bg^j)=(-1)^{bi}\sigma_\lambda(x^a,x^b),
	\]
	for $a,b,i,j\in\{0,1\}$. It follows that $H_{\sigma_{\lambda}}\simeq H$ for every $\lambda\in\k$. If $\lambda\neq 0$, then $\sigma_{\lambda}\not\sim \sigma_1$, but $\sigma_{1}\not\sim \sigma_0=\eps$. Indeed, they determine the (non-isomorphic) cleft objects $C_\lambda=\k\lg y,g:y^2=\lambda, g^2=1, gy=-yg\rg$, $\lambda=0,1$.
\end{remark}

\subsection{Yetter-Drinfeld modules and Nichols algebras}

We recall that the category $\ydh$ of Yetter-Drinfeld modules over $H$ is the category of all simultaneously  left modules and left comodules $M$ such that the action $\cdot$ and the coaction $\rho$ are subject to
\[
\rho(h\cdot m)=h_{(1)}m_{(-1)}\Ss(h_{(3)})\ot h_{(2)}\cdot m_{(0)}, \qquad m\in M, h\in H.
\] When the antipode $\Ss$ of $H$ is bijective this is a braided category, with braiding $c_{M,N}\colon M\ot N\to N\ot M$, $m\ot n\mapsto m_{(-1)}\cdot n\ot m_{(0)}$; $M,N\in\ydh$, $m\in M$, $n\in N$.

A braided vector space is a pair $(V,c)$, where $V$ is a vector space together with an invertible linear map $c\colon V\ot V\to V\ot V$ that satisfies the braid equation $(c\ot\id)(\id\ot\, c)(c\ot\id)=(\id\ot\, c)(c\ot\id)(\id\ot \,c)$. In particular, any $M\in\ydh$ is a braided vector space with $c=c_{M,M}$. Conversely, a realization of $(V,c)$ over $H$ is a structure of Yetter-Drinfeld module on $V$ so that $c=c_{V,V}\in\ydh$.

If $B\in\ydh$ is an algebra, then one may use the braiding to define an algebra structure on $B\ot B\in\ydh$.
A bialgebra, resp.~Hopf algebra, in $\ydh$ is defined under this convention. In this case, the bosonization $B\# H$ becomes a (Hopf) algebra. We recall that in this case the multiplication is defined via
\[
(b\otimes h)(c\otimes k)=b (h_{(1)}\cdot c)\ot h_{(2)}k, \qquad b,c\in B, \ h,k\in H.
\]
We write $bh$ for an element $b\#h$ of $B\# H$. Notice that this is coherent with the multiplication just defined as
\[B\simeq B\otimes 1_H\subset B\# H \qquad  \text{ and }  \qquad H\simeq 1_B\ot H\subset B\otimes H
\] as algebras and $(b\otimes 1)(1\otimes h)=b\otimes h$.
As well, notice that, for $h\in H$, $b \in B$:
\begin{align}\label{eqn:hb}
hb&=h_{(1)}\cdot b h_{(2)}, & bh&=h_{(2)}(\Ss^{-1}(h_{(1)})\cdot b), & h_{(1)}b\Ss(h_{(2)})&=h\cdot b.
\end{align}
In turn, the comultiplication is given by
the rule
\begin{align}\label{eqn:comultBH}
\Delta(x\#h)=x_{(1)}\#x_{(2)(-1)}h_{(1)}\ot x_{(2)(0)}\# h_{(2)}.
\end{align}
See \cite{AS1} for details.

A Nichols algebra is a connected graded Hopf algebra $B=\oplus_{n\geq 0} B_n$ in $\ydh$ generated by $V=B_1$ and such that $\P(B)=V$.
We say that such a Hopf algebra is pre-Nichols when this last condition is not satisfied, so that $V\subsetneq \P(V)$.
We denote such Nichols algebra by $\Bq(V)$, as it is completely determined by the braided vector space $(V,c)$, independently of the realization in $\ydh$.

A finite-dimensional braided vector space is of diagonal type when there is a basis $\{x_1,\dots,x_\theta\}$ of $V$ and a matrix $\qb=(q_{ij})_{i,j\in\I_\theta}$ so that the braiding $c=c^{\qb}$ is given by $c(x_i\ot x_j)=q_{ij}x_j\ot x_i$, $i,j\in \I_\theta$. In this setting $\Bq(V)$ is also denoted $\Bq_{\qb}$.

A principal YD-realization of $V$ over $H$ is a family $(\chi_i,g_i)_{i\in \I_\theta}$ with $g_i\in G(H)$ and $\chi_i\colon H\to \k$  algebra maps so that 
\begin{align*}
\chi_j(g_i)=q_{ij}  \quad \text{ and } \quad  \chi_i(h)g=\chi_i(h_{(2)})h_{(1)}g\Ss(h_{(3)}), 
\end{align*}
$i,j\in \I_\theta$, $g\in G(H), h\in H$. This determines a realization $V\in\ydh$ via
\begin{align}\label{eqn:realization}
\rho(x_i)&=g_i\ot x_i, &  h\cdot x_i&=\chi_i(h)x_i, \qquad h\in H, i\in\I_\theta.
\end{align}

\begin{remark}
Diagonal braided vector spaces $(V,c^{\qb})$, or equivalently matrix braidings $\bq$, with finite-dimensional associated Nichols algebra, are classified in \cite{H} in terms of decorated Dynkin diagrams. On top of that, the Nichols algebras $\Bq_{\qb}$ are presented by generators and relations in \cite{An1,An2} as quotients of the tensor algebra $T(V)$. In turn, if $H$ is a semisimple Hopf algebra with a principal YD-realization $V\in\ydh$, then all deformations of $A=\Bq_{\qb}\# H$ are classified in \cite{AnG}, by defining an exhaustive family of cleft objects $(C_{\bs\lambda})_{\bs\lambda\in\Lambda}$ for $A$ in such a way that every deformation arises as the associated Schauenburg left Hopf algebra $L(C_{\bs\lambda},A)$, which is presented by generators and relations as a quotient $\mathfrak{u}_{\qb}(\bs\lambda)$ of $T(V)\# H$. In particular, $\mathfrak{u}_{\qb}(\bs\lambda)$ is a cocycle deformation of $A$.
\end{remark}

For any connected $B=\bigoplus_{n\geq 0} B_n$ bialgebra and $b\in B^+\coloneqq\bigoplus_{n\geq 1} B_n$, we shall consider the restricted comultiplication:
\begin{align}\label{eqn:restr-comult}
\underline{\Delta}(b)\coloneqq\Delta(b)-b\ot 1-1\ot b.
\end{align}
Also, we set $\underline{\Delta}(1)=1\ot 1$; and write
$
\underline{\Delta}(b)=b_{\underline{{(1)}}}\ot b_{\underline{{(2)}}}$, $b\in B.
$
We will use a similar notation $\underline{\rho}$ for coactions. A restricted version of coassociativity holds:
	\[
	(\underline{\Delta}\ot \id)\underline{\Delta}=(\id\ot \underline{\Delta})\underline{\Delta}.
	\]
	As usual, we write $b_{\underline{{(1)}}{\underline{{(1)}}}}\ot b_{\underline{{(1)}}{\underline{{(2)}}}}\ot b_{\underline{{(2)}}}=b_{\underline{{(1)}}}\ot b_{\underline{{(2)}}{\underline{{(1)}}}}\ot b_{\underline{{(2)}}{\underline{{(2)}}}}=b_{\underline{{(1)}}}\ot b_{\underline{{(2)}}}\ot b_{\underline{{(3)}}}$.

\section{Some tools to compute the cocycles}\label{sec:tools}

Recall our setting from \S \ref{sec:intro}. Namely, $H$ is a semisimple Hopf algebra, $B\in\ydh$, $A=B\# H$ and we consider Hopf cocycles 
\[
\sigma\in Z^2_0(A)=\{\sigma\in Z^2(A)|\sigma_{|H\otimes H}=\eps\}\simeq Z^2(B)^H,
\]
so $\sigma=\sigma_B\#\eps$, $\sigma_B\in Z^2(B)^H$, see \eqref{eqn:sigma-intro}. We let $\mathbb{B}$ be a homogeneous basis of $B$, set $V\coloneqq B_1$ and fix $\{x_1,\dots,x_\theta\}\subset \mathbb{B}$ a basis of $V$.

In this section we collect some ideas that reduce the number of computations necessary to calculate the values
\[
\{\sigma(b,b'):b,b'\in\mathbb{B}\}.\]
We also investigate the orbit of $\sigma$ under the action of $U(A^\ast)$ in \S \ref{sec:orbit}.

\smallskip

The first result is straightforward.
\begin{lemma}\label{lem:cuentacociclo}
	Let $(E,\gamma)$ be a cleft object for $B$ in $H\mod$ and let $\sigma$ the associated cocycle as in \eqref{eqn:hopfcocycle}.
	For each $x\in \P(B), b\in B^+$, we have
	\begin{align*}
	\sigma(x,b)=\gamma(x_{(-1)}\cdot b_{(1)})\gamma^{-1}(x_{(0)}b_{(2)}).
	\end{align*}
\end{lemma}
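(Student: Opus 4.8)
The plan is to evaluate the braided analogue of the cleft cocycle formula \eqref{eqn:formula-cleft} on $x\ot b$ with $x$ primitive. Writing that formula coordinate-free as $\sigma=(\gamma\ot\gamma)\ast(\gamma^{-1}\circ m)$, a convolution product in $\Hom(B\ot B,E)$ where $\gamma\ot\gamma$ denotes the map $u\ot v\mapsto\gamma(u)\gamma(v)$ into $E$ (see \cite{AG}), the first step is to expand this convolution using the comultiplication of the braided tensor product $B\ot B$. Inserting the braiding $c(a_{(2)}\ot b_{(1)})=a_{(2)(-1)}\cdot b_{(1)}\ot a_{(2)(0)}$ between the two middle legs of $\Delta_B\ot\Delta_B$ yields
\[
\sigma(a,b)=\gamma(a_{(1)})\,\gamma\bigl(a_{(2)(-1)}\cdot b_{(1)}\bigr)\,\gamma^{-1}\bigl(a_{(2)(0)}\,b_{(2)}\bigr),
\]
where $a_{(2)(-1)}\ot a_{(2)(0)}$ is the $H$-coaction on $a_{(2)}\in B$.

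Next I would specialize $a=x\in\P(B)$, so that $\Delta(x)=x\ot 1+1\ot x$, splitting the expression above into exactly two summands. For the summand coming from $x_{(1)}\ot x_{(2)}=x\ot 1$, the coaction of the unit is trivial, $1_{(-1)}\ot 1_{(0)}=1_H\ot 1$, so the term collapses to $\gamma(x)\,\gamma(b_{(1)})\,\gamma^{-1}(b_{(2)})=\gamma(x)\,(\gamma\ast\gamma^{-1})(b)$; this vanishes because $(\gamma\ast\gamma^{-1})(b)=\eps(b)1_E$ and $\eps(b)=0$ for $b\in B^+$. For the summand coming from $x_{(1)}\ot x_{(2)}=1\ot x$, one has $\gamma(1)=1$ since $\gamma$ is a (normalized) section, and what survives is precisely $\gamma(x_{(-1)}\cdot b_{(1)})\,\gamma^{-1}(x_{(0)}b_{(2)})$, which is the asserted identity.

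The computation is short; the only point that needs care is the bookkeeping of the braiding in the comultiplication of $B\ot B$ --- that is, making sure the Yetter--Drinfeld coaction is inserted in the correct leg so that $a_{(2)(-1)}\cdot b_{(1)}$ appears --- together with the use of the normalization $\gamma(1)=1$. Everything else is a direct application of the counit property of $\gamma\ast\gamma^{-1}$ and of $\eps|_{B^+}=0$.
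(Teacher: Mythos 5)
Your proposal is correct and follows essentially the same route as the paper: both expand the braided convolution formula $\sigma(a,b)=\gamma(a_{(1)})\gamma(a_{(2)(-1)}\cdot b_{(1)})\gamma^{-1}(a_{(2)(0)}b_{(2)})$ using the comultiplication of the braided tensor product, specialize to $\Delta(x)=x\ot 1+1\ot x$, and kill the first summand via $(\gamma\ast\gamma^{-1})(b)=\eps(b)=0$ for $b\in B^+$. Your justification for that vanishing is in fact cleaner than the paper's citation of \eqref{eqn:hopfcocycle}, which should really point to the counit property of the convolution inverse.
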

\begin{proof}
	Indeed, we have that
	\begin{align*}
	\Delta(x\ot b)=x\ot b_{(1)}\ot 1\ot b_{(2)}+1\ot x_{(-1)}\cdot b_{(1)}\ot x_{(0)}\ot b_{(2)}.
	\end{align*}
	Hence we get
	\begin{align*}
	\sigma(x,b)&=\gamma(x)\gamma(b_{(1)})\gamma^{-1}(b_{(2)})+ \gamma(x_{(-1)}\cdot b_{(1)})\gamma^{-1}(x_{(0)}b_{(2)})\\
	&=	\gamma(x_{(-1)}\cdot b_{(1)})\gamma^{-1}(x_{(0)}b_{(2)}),
	\end{align*}
 by	\eqref{eqn:hopfcocycle}.
\end{proof}

Next lemma drastically reduces the number of computations, showing that the values $\{\sigma(b,b'):b,b'\in\mathbb{B}\}$ rely on the bialgebra structure of $B$ in $\ydh$ and the set:
\begin{align}\label{eqn:S-sigma}
S_\sigma\coloneqq\{\sigma(x_i,b): i\in \I_\theta; b\in \mathbb{B}\}.
\end{align}
%Let $(c_{i,j}^{p,q})_{i,j,p,q\in\I}$ be the {\it matrix coefficients of the braiding} $c:V\ot V\to V\ot V$, so
Let $(c_{i,j}^{p,q})_{
	\begin{smallmatrix}
	i,j,\\ p,q
	\end{smallmatrix}
	\in\I_\theta}$ be the {\it matrix coefficients of the braiding} $c:V\ot V\to V\ot V$, so
\[
c(x_i\ot x_j)=\sum_{p,q\in \I_\theta} c_{i,j}^{p,q}x_p\ot x_q, \qquad i,j\in \I_\theta.
\]

\begin{lemma} \label{lem:descomposi}
	Let $a=xa',b\in B^+$, $x\in V$, $\deg a'>0$. If $a$ is a monomial, then
	\begin{align}\label{eqn:hop-cocycle-decomp-recursive}
	\begin{split}
	\sigma(a,b)=&\sigma(x,a'b)+\sigma(a',b_{\underline{{(1)}}})\sigma(x,b_{\underline{{(2)}}})+\sigma(a'_{\underline{{(1)}}},a'_{\underline{{(2)}}(-1)}\cdot b)\sigma(x,a'_{\underline{{(2)}}(0)})\\
	&+\sigma(a'_{\underline{{(1)}}},a'_{\underline{{(2)}}(-1)}\cdot b_{\underline{{(1)}}})\sigma(x,a'_{\underline{{(2)}}(0)}b_{\underline{{(2)}}})-\sigma(x,a'_{\underline{{(1)}}})\sigma(a'_{\underline{{(2)}}},b).
	\end{split}
	\end{align}
	In particular, for each $a,b\in\mathbb{B}^+$ there is a subset $S_\sigma^{a,b}\subseteq S_\sigma$ such that
	\begin{align}\label{eqn:hop-cocycle-decomp}
	\sigma(a,b)=\sum_{s_i\in S_\sigma^{a,b}}c_{s_1\dots s_\ell}\,s_1s_2\dots s_\ell.
	\end{align}
	for some $(c_{s_1\dots s_\ell})_{s_i\in S_\sigma^{a,b}}\in\mathbb{Z}[c_{i,j}^{p,q}]_{\begin{smallmatrix}
		i,j,\\ p,q
		\end{smallmatrix}
		\in\I_\theta}$.
\end{lemma}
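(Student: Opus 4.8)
The plan is to obtain the recurrence \eqref{eqn:hop-cocycle-decomp-recursive} by evaluating the braided cocycle identity \eqref{eqn:hopfcocycle-cordfree} on $x\ot a'\ot b$ and isolating $\sigma(xa',b)$, and then to deduce \eqref{eqn:hop-cocycle-decomp} by induction on $\deg a$. Both sides of \eqref{eqn:hopfcocycle-cordfree} are convolutions in $\Hom(B^{\ot 3},\k)$, hence are computed through the comultiplication of $B^{\ot 3}$ in $\ydh$: comultiplying each leg and braiding the three ``second halves'' to the right, the first tensor copy of $\Delta_{B^{\ot 3}}(x\ot a'\ot b)$ is $x_{(1)}\ot x_{(2)(-2)}\cdot a'_{(1)}\ot (x_{(2)(-1)}a'_{(2)(-1)})\cdot b_{(1)}$ and the second is $x_{(2)(0)}\ot a'_{(2)(0)}\ot b_{(2)}$. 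Since $x\in V\subseteq\P(B)$ is primitive, $\Delta x=x\ot 1+1\ot x$ and each side reduces to two summands. On the left-hand side $(\sigma\ot\eps)\ast\sigma(m\ot\id)$, the summand with $x_{(2)}=x$ collapses, via $\sigma(1,-)=\eps$ and the (co)unit axioms, to $\sigma(xa',b)$, while the summand with $x_{(2)}=1$ gives $\sigma(x,a'_{(1)})\sigma(a'_{(2)},b)$. On the right-hand side $(\eps\ot\sigma)\ast\sigma(\id\ot m)$, the summand with $x_{(1)}=x$ is annihilated by $\eps(x)=0$, and in the summand with $x_{(1)}=1$ the residual coaction of $x$ enters the first $\sigma$ through its two leftmost legs $x_{(-2)}\ot x_{(-1)}$, which by coassociativity of the coaction form the comultiplication of a single element; hence the $H$-invariance $\sigma(h_{(1)}\cdot-,h_{(2)}\cdot-)=\eps(h)\sigma(-,-)$ of $\sigma\in Z^2(B)^H$ applies and, together with $\eps(x_{(-1)})x_{(0)}=x$, leaves $\sigma(a'_{(1)},a'_{(2)(-1)}\cdot b_{(1)})\,\sigma(x,a'_{(2)(0)}b_{(2)})$. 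Altogether \eqref{eqn:hopfcocycle-cordfree} reads
\[
\sigma(xa',b)+\sigma(x,a'_{(1)})\sigma(a'_{(2)},b)=\sigma(a'_{(1)},a'_{(2)(-1)}\cdot b_{(1)})\,\sigma(x,a'_{(2)(0)}b_{(2)}).
\]

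Next I would solve for $\sigma(xa',b)$ and expand every full coproduct through its restricted version \eqref{eqn:restr-comult}, writing $\Delta(u)=u\ot1+1\ot u+u_{\underline{(1)}}\ot u_{\underline{(2)}}$ for $u=a',b$. As $a',b\in B^+$ and $\sigma$ is normalized, every resulting summand carrying a factor $\sigma(1,-)$, $\sigma(-,1)$ or $\eps$ on $B^+$ vanishes, the mixed terms are killed by $(\eps\ot\id)\underline{\Delta}=0=(\id\ot\eps)\underline{\Delta}$, and the remaining ones are disposed of by $h\cdot1=\eps(h)1$. Book-keeping the nine terms produced by $\sigma(a'_{(1)},a'_{(2)(-1)}\cdot b_{(1)})\sigma(x,a'_{(2)(0)}b_{(2)})$ then leaves precisely the first four summands of \eqref{eqn:hop-cocycle-decomp-recursive}, while $-\sigma(x,a'_{(1)})\sigma(a'_{(2)},b)$ contributes the last one $-\sigma(x,a'_{\underline{(1)}})\sigma(a'_{\underline{(2)}},b)$.

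For \eqref{eqn:hop-cocycle-decomp} I would induct on $n=\deg a$, proving the claim for all monomials $a$ of degree $n$ and all $b\in\mathbb{B}$ and using the linearity of $\sigma$ in each variable to re-express every intermediate argument in $\mathbb{B}$. If $n=1$ then $a=x_i$ and $\sigma(x_i,b)\in S_\sigma$ by \eqref{eqn:S-sigma}. If $n>1$, write $a=xa'$ with $x\in V$ and $a'$ a monomial of degree $n-1$ and apply the recurrence: every factor $\sigma(x,-)$ has first entry a generator, so after rewriting its second entry in $\mathbb{B}$ it becomes a $\mathbb{Z}[c^{p,q}_{i,j}]$-combination of elements of $S_\sigma$; the factors $\sigma(a',-)$, $\sigma(a'_{\underline{(1)}},-)$, $\sigma(a'_{\underline{(2)}},-)$ have first entry of degree $\le n-1$, and expanding these first entries in $\mathbb{B}$ reduces them to monomials to which the inductive hypothesis applies. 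That all coefficients lie in $\mathbb{Z}[c^{p,q}_{i,j}]$ reflects the structural fact that, $B$ being generated by $V$, the multiplication, the restricted comultiplication $\underline{\Delta}$ and the map $(p,q)\mapsto p_{(-1)}\cdot q$ (the first leg of the braiding $c$) all have structure constants in $\mathbb{Z}[c^{p,q}_{i,j}]$ relative to $\mathbb{B}$.

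I expect the main obstacle to be the first step: correctly propagating the braidings in $\Delta_{B^{\ot 3}}$ and verifying that the spurious coaction of $x$ surviving on the right-hand side is exactly the one absorbed by the $H$-invariance of $\sigma$. The rest is the routine normalization book-keeping of the second step and the degree induction of the third, the latter resting on the stated polynomiality of the structure constants in the braiding coefficients.
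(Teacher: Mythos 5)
Your proposal is correct and follows essentially the same route as the paper: both evaluate the braided 2-cocycle identity on $x\ot a'\ot b$, use primitivity of $x$ to reduce each side to a handful of terms, isolate $\sigma(xa',b)$ to get the recurrence \eqref{eqn:hop-cocycle-decomp-recursive}, and then induct on $\deg a$ for \eqref{eqn:hop-cocycle-decomp}. The only difference is presentational: you make explicit the braided comultiplication of $B^{\ot 3}$ and the use of the $H$-invariance of $\sigma$ to absorb the residual coaction of $x$ on the right-hand side, steps which the paper's Sweedler-notation expansion leaves implicit.
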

\begin{proof}
	We proceed by induction on the degree $\deg(a)$ of a monomial $a\in B$. If $\deg(a)=1$, this is clear.
	Fix $n>1$ and assume that such decomposition exists for elements of degree $\leq n-1$.
	If $a\in B$ is of degree $n$, then there exists $x\in V$ such that $a=xa'$, where $\deg(a')=n-1$. Hence we have
	$\Delta(x)=x\ot 1+1\ot x$ and
	\begin{align*}
	\Delta(a')&=a'\ot 1+1\ot a'+a'_{\underline{{(1)}}}\ot a'_{\underline{{(2)}}}, & \Delta(b)&=b\ot 1+1\ot b+b_{\underline{{(1)}}}\ot b_{\underline{{(2)}}},
	\end{align*}
	{\it cf.}~\eqref{eqn:restr-comult}.
	In this case,  \eqref{eqn:hopfcocycle} becomes:
	\[
	\sigma(x_{(1)},a'_{(1)})\sigma(x_{(2)}a'_{(2)},b)=\sigma(a'_{(1)},b_{(1)})\sigma(x,a'_{(2)}b_{(2)}).
	\]
	Now, the left hand side can be developed further and we get
	\begin{align*} \sigma(x_{(1)}&,a'_{(1)})\sigma(x_{(2)}a'_{(2)},b)=\sigma(x,a')\sigma(1,b)+\sigma(x,1)\sigma(a',b)+\sigma(x,a'_{\underline{{(1)}}})\sigma(a'_{\underline{{(2)}}},b)\\
	&+\sigma(1,x_{(-1)}\cdot a')\sigma(x_{(0)},b)+\sigma(1,1)\sigma(xa',b)+\sigma(1,x_{(-1)}\cdot a'_{\underline{{(1)}}})\sigma(x_{(0)}a'_{\underline{{(2)}}},b)\\
	&=\sigma(a,b)+\sigma(x,a'_{\underline{{(1)}}})\sigma(a'_{\underline{{(2)}}},b),
	\end{align*}
	that is, $\sigma(a,b)=\sigma(x_{(1)},a'_{(1)})\sigma(x_{(2)}a'_{(2)},b)-\sigma(x,a'_{\underline{{(1)}}})\sigma(a'_{\underline{{(2)}}},b)$.
	
	Now we can develop the right hand side, and obtain:
	\begin{align*}
	\sigma(&a'_{(1)},b_{(1)})\sigma(x,a'_{(2)}b_{(2)})= \sigma(a',b)\sigma(x,1)+\sigma(a',1)\sigma(x,b)+\sigma(a',b_{\underline{{(1)}}})\sigma(x,b_{\underline{{(2)}}})\\
	&+\sigma(1,a'_{(-1)}\cdot b)\sigma(x,a'_{(0)})+\sigma(1,1)\sigma(x,a'b)+\sigma(1,a'_{(-1)}\cdot b_{\underline{{(1)}}})\sigma(x,a'_{(0)}b_{\underline{{(2)}}})\\
	&+\sigma(a'_{\underline{{(1)}}},{a'_{\underline{{(2)}}(-1)}}\cdot b)\sigma(x,a'_{\underline{{(2)}}(0)})+\sigma(a'_{\underline{{(1)}}},1)\sigma(x,a'_{\underline{{(2)}}}b)\\
	&+\sigma(a'_{\underline{{(1)}}},{a'_{\underline{{(2)}}(-1)}}\cdot b_{\underline{{(1)}}})\sigma(x,{a'_{\underline{{(2)}}(0)}}b_{\underline{{(2)}}})\\
	=& \, \sigma(x,a'b)+\sigma(a',b_{\underline{{(1)}}})\sigma(x,b_{\underline{{(2)}}})+\sigma(a'_{\underline{{(1)}}},{a'_{\underline{{(2)}}(-1)}}\cdot b)\sigma(x,a'_{\underline{{(2)}}(0)})\\
	&+\sigma(a'_{\underline{{(1)}}},a'_{\underline{{(2)}}(-1)}\cdot b_{\underline{{(1)}}})\sigma(x,a'_{\underline{{(2)}}(0)}b_{\underline{{(2)}}}).
	\end{align*}
	If $a=xa'$, then we obtain \eqref{eqn:hop-cocycle-decomp-recursive}.
	The conclusion is obtained by applying inductive hypothesis on each summand.
\end{proof}

\begin{definition}\label{def:hop-cocycle-decomp}
	The expression obtained in \eqref{eqn:hop-cocycle-decomp} via the recursive process \eqref{eqn:hop-cocycle-decomp-recursive} is the Hopf cocycle decomposition of $\sigma$ in terms of the set $S_{\sigma}$ from \eqref{eqn:S-sigma}.
\end{definition}

\begin{example}\label{exa:descompos-lowdregree}
	We exhibit explicitly the Hopf cocycle decomposition of a cocycle $\sigma$, in low degrees.
	
	Let $x,y,w,z\in V$ and $b\in \mathbb{B}^+$, then
	\begin{align*}\label{eq:descgrado2}
	\sigma(xy,b)=&\sigma(x,yb)+\sigma(x,b_{\underline{(2)}})\sigma(y,b_{\underline{(1)}}),\\
	\sigma(xyw,b)=&\sigma(x,ywb)-\sigma(x,y)\sigma(w,b)+\sigma(y,w_{(-1)}\cdot b)\sigma(x,w_{(0)})\\
	&+\sigma(y_{(-2)}\cdot w,y_{(-1)}\cdot b)\sigma(x,y_{(0)})+\sigma(yw,b_{\underline{(1)}})\sigma(x,b_{\underline{(2)}})\\
	&+\sigma(y,w_{(-1)}\cdot b_{\underline{(1)}})\sigma(x,w_{(0)}b_{\underline{(2)}})-\sigma(x,y_{(-1)}\cdot w)\sigma(y_{(0)},b)\\
	&+\sigma(y_{(-2)}\cdot w, y_{(-1)}\cdot b_{\underline{(1)}})\sigma(x,y_{(0)}b_{\underline{(2)}}),\\
	\sigma(xywz,b)=&\sigma(x,ywzb)-\sigma(x,y)\sigma(wz,b)-\sigma(x,yw)\sigma(z,b)\\
	&-\sigma(x,yw_{(-1)}\cdot z)\sigma(w_{(0)},b)-\sigma(x,y_{(-1)}\cdot(wz))\sigma(y_{(0)},b)&\\
	&-\sigma(x,y_{(-1)}\cdot w)\sigma(y_{(0)}z,b)-\sigma(x,(y_{(-1)}w_{(-1)})\cdot z)\sigma(y_{(0)}w_{(0)},b)
	\\
	&+\sigma(yw,z_{(-1)}\cdot b)\sigma(x,z_{(0)})+\sigma(y(w_{(-2)}\cdot z),w_{(-1)}\cdot b)\sigma(x,w_{(0)})\\
	&+\sigma(y,(w_{(-1)}z_{(-1)})\cdot b)\sigma(x,w_{(0)}z_{(0)})+\sigma(ywz,b_{\underline{(1)}})\sigma(x,b_{\underline{(2)}})\\
	&+\sigma(y_{(-2)}\cdot(wz),y_{(-1)}\cdot b)\sigma(x,y_{(0)})\\
	&+\sigma(y_{(-2)}\cdot(wz), y_{(-1)}\cdot b_{\underline{(1)}})\sigma(x,y_{(0)}b_{\underline{(2)}})\\
	&+\sigma(y_{(-2)}\cdot w,(y_{(-1)}z_{(-1)})\cdot b)\sigma(x,y_{(0)}z_{(0)})\\
	&+\sigma((y_{(-2)}w_{(-2)})\cdot z,(y_{(-1)}w_{(-1)})\cdot b)\sigma(x,y_{(0)}w_{(0)})
	\\
	&+\sigma(yw,z_{(-1)}\cdot b_{\underline{(1)}})\sigma(x,z_{(0)}b_{\underline{(2)}})\\
	&+\sigma(yw_{(-2)}\cdot z,w_{(-1)}\cdot b_{\underline{(1)}})\sigma(x, w_{(0)}b_{\underline{(2)}})
	\\
	&+\sigma(y,(w_{(-1)}z_{(-1)})\cdot b_{\underline{(1)}})\sigma(x,w_{(0)}z_{(0)}b_{\underline{(2)}})\\
	&+\sigma(y_{(-2)}\cdot w,(y_{(-1)}z_{(-1)})\cdot b_{\underline{(1)}})\sigma(x,y_{(0)}z_{(0)}b_{\underline{(2)}})
	\\
	&+\sigma((y_{(-2)}w_{(-2)})\cdot z, (y_{(-1)}w_{(-1)})\cdot b_{\underline{(1)}})\sigma(x,y_{(0)}w_{(0)}b_{\underline{(2)}}).
	\end{align*}
\end{example}

We have the following straightforward but useful consequence of the decomposition exhibited in Lemma \ref{lem:descomposi}.
\begin{corollary}\label{cor:primeralinea}
	Let $\sigma$ and $\sigma'$ be two Hopf cocycles in $B$.
	\begin{enumerate}[label=(\alph*)]
		\item If $\sigma(x_i,b)=\sigma'(x_i,b)$ for each $i\in\I_\theta$ and every $b\in\mathbb{B}$, then $\sigma=\sigma'$.
		\item If $\alpha\in B^\ast$ is convolution invertible and $\alpha\rightharpoonup\sigma(x_i,b)=\sigma'(x_i,b)$, for each $i\in\I_\theta$ and every $b\in\mathbb{B}$, then $\alpha\rightharpoonup\sigma=\sigma'$.\hfill\qed
	\end{enumerate}
\end{corollary}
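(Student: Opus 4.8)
The plan is to read off both statements from the decomposition of Lemma~\ref{lem:descomposi}. Its content is precisely that every value $\sigma(a,b)$ is a \emph{universal} expression in the values $S_\sigma=\{\sigma(x_i,b'):i\in\I_\theta,\,b'\in\mathbb{B}\}$, namely the $\mathbb{Z}[c_{i,j}^{p,q}]$-polynomial recorded in \eqref{eqn:hop-cocycle-decomp}, whose shape and coefficients depend only on the braided bialgebra structure of $B$ and not on the particular cocycle.

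For part (a) I would argue by induction on $\deg a$, with $a,b\in\mathbb{B}$; by bilinearity this gives $\sigma=\sigma'$. The cases $a=1$ or $b=1$ are settled by normalization, $\sigma(1,b)=\eps(b)=\sigma'(1,b)$ and $\sigma(a,1)=\eps(a)=\sigma'(a,1)$, and the case $\deg a=1$ is the hypothesis, extended from the basis $\{x_i\}$ to all of $V$ by linearity. For $\deg a=n>1$ and $b\in\mathbb{B}^+$, write $a=xa'$ and expand $\sigma(a,b)$ via \eqref{eqn:hop-cocycle-decomp-recursive}. The key point is that each $\sigma$-factor on the right-hand side has first argument one of $x$, $a'$, $a'_{\underline{(1)}}$, $a'_{\underline{(2)}}$, all of degree $\leq n-1$; the second arguments are complicated elements manufactured from $b$ via the action and (restricted) comultiplication, but they are irrelevant, since only the first-argument degree drives the recursion. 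Because \eqref{eqn:hop-cocycle-decomp-recursive} is structural it holds verbatim for $\sigma'$ with identical coefficients, so the inductive hypothesis replaces every $\sigma$-factor by the matching $\sigma'$-factor and yields $\sigma(a,b)=\sigma'(a,b)$. Equivalently, one applies \eqref{eqn:hop-cocycle-decomp} directly: both sides are the same polynomial in $S_\sigma=S_{\sigma'}$.

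For part (b) I would first record that $\alpha\rightharpoonup\sigma$ is itself a Hopf cocycle, since $U(B^\ast)$ acts on $Z^2(B)$ through \eqref{eqn:hopfcohomologous}; after normalizing $\alpha$ so that $\alpha(1)=1$ — harmless, as recalled before Remark~\ref{rem:taft} — the cocycle $\alpha\rightharpoonup\sigma$ is normalized and Lemma~\ref{lem:descomposi} applies to it. The hypothesis states exactly that $\alpha\rightharpoonup\sigma$ and $\sigma'$ agree on every pair $(x_i,b)$, $i\in\I_\theta$, $b\in\mathbb{B}$, so part (a) applied to the pair $(\alpha\rightharpoonup\sigma,\sigma')$ gives $\alpha\rightharpoonup\sigma=\sigma'$.

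I do not expect a genuine obstacle here: the proof is bookkeeping on top of Lemma~\ref{lem:descomposi}. The only things to check carefully are that the first-argument degree strictly decreases in \eqref{eqn:hop-cocycle-decomp-recursive}, so that the induction is well-founded, and that the coefficients of the decomposition are intrinsic to $B$ and hence shared by $\sigma$ and $\sigma'$ — both already built into the statement of Lemma~\ref{lem:descomposi}.
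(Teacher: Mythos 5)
Your proof is correct and is precisely the argument the paper leaves implicit: the corollary is stated with a bare \qed{} as a ``straightforward consequence'' of Lemma~\ref{lem:descomposi}, and your induction --- the first-argument degree strictly drops in \eqref{eqn:hop-cocycle-decomp-recursive}, the coefficients there are intrinsic to the bialgebra structure of $B$ and hence shared by $\sigma$ and $\sigma'$, second arguments are handled by linearity, and (b) reduces to (a) --- is exactly that consequence spelled out. The only step to phrase more carefully is the normalization in (b): rescaling $\alpha$ rescales $\alpha\rightharpoonup\sigma$, so it is not literally harmless, and in fact (b) fails for unnormalized $\alpha$ (take $\alpha=c\,\eps$ with $c\neq1$ and $\sigma=\sigma'=\eps\ot\eps$: the hypothesis holds on every pair $(x_i,b)$ but $\alpha\rightharpoonup\sigma=c\,\eps\ot\eps\neq\sigma'$); the statement must therefore be read with the paper's standing convention $\alpha(1)=1$ --- which you do cite --- and under that convention there is nothing to normalize and part (a) applies directly.
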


\subsection{Some remarks on $\alpha\rightharpoonup\sigma$}\label{sec:orbit}

Let $\sigma\in Z^2_0(A)$, $\alpha\in U(A^*)$, so that $\sigma'\coloneqq \alpha\rightharpoonup\sigma$ as in \eqref{eqn:hopfcohomologous} is a Hopf cocycle cohomologous to $\sigma$. Moreover, we assume that $\sigma'\in Z^2_0(A)$.
Recall that we may assume $\alpha(1)=1$.

\begin{remark}
	$\gamma_\alpha\coloneqq \alpha\ast\gamma\colon A\to E$ is another section, with associated cocycle $\alpha\rightharpoonup\sigma$.
\end{remark}

\begin{lemma}
	If $\alpha\ra\sigma=\sigma'$, then $\alpha_{|H}\in\Alg(H,\k)$. Hence $\alpha_{|H}^{-1}=(\alpha\circ\Ss)_{|H}$.
\end{lemma}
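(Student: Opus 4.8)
The plan is to test the defining identity $\sigma'=\alpha\ra\sigma$ from \eqref{eqn:hopfcohomologous} on arguments drawn from $H$ and read off multiplicativity of $\alpha_{|H}$. The key structural input is that $H\hookrightarrow A=B\#H$, $h\mapsto 1\# h$, is a Hopf subalgebra, so for $h,k\in H$ every iterated coproduct occurring in \eqref{eqn:hopfcohomologous} is computed inside $H$ and the product $h_{(3)}k_{(3)}$ is the product of $H$. First I would specialize \eqref{eqn:hopfcohomologous} to
\[
\sigma'(h,k)=\alpha(h_{(1)})\,\alpha(k_{(1)})\,\sigma(h_{(2)},k_{(2)})\,\alpha^{-1}(h_{(3)}k_{(3)}),\qquad h,k\in H,
\]
and then use $\sigma_{|H\ot H}=\eps$ together with $\sigma'_{|H\ot H}=\eps$, both of which hold because $\sigma,\sigma'\in Z^2_0(A)$.

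Substituting $\sigma(h_{(2)},k_{(2)})=\eps(h_{(2)})\eps(k_{(2)})$ and contracting the two counits collapses the triple coproducts to double ones, so the identity becomes
\[
\eps(h)\eps(k)=\alpha(h_{(1)})\,\alpha(k_{(1)})\,\alpha^{-1}(h_{(2)}k_{(2)}),\qquad h,k\in H.
\]
Writing $\beta\coloneqq\alpha_{|H}\in H^\ast$, the step I expect to require the most care is the bookkeeping that the convolution inverse restricts correctly, i.e.\ $\alpha^{-1}_{|H}=\beta^{-1}$ in $H^\ast$. This is exactly where the Hopf subalgebra property is used: because $\Delta_A$ restricts to $\Delta_H$ on $H$, restriction $A^\ast\to H^\ast$ is an algebra map for the convolution products, and applying it to $\alpha\ast\alpha^{-1}=\eps$ gives $\beta\ast(\alpha^{-1}_{|H})=\eps_{|H}$, whence $\alpha^{-1}_{|H}=\beta^{-1}$ by uniqueness of convolution inverses. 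With this identification the displayed equation reads
\[
(\beta\ot\beta)\ast(\beta^{-1}\circ m)=\eps\ot\eps
\]
in the convolution algebra $\Hom(H\ot H,\k)$, where $H\ot H$ carries the tensor-product coalgebra structure and $m$ is the multiplication of $H$.

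Finally, $\beta\ot\beta$ is convolution invertible with inverse $\beta^{-1}\ot\beta^{-1}$, since $(\beta\ot\beta)\ast(\beta^{-1}\ot\beta^{-1})=(\beta\ast\beta^{-1})\ot(\beta\ast\beta^{-1})=\eps\ot\eps$. Cancelling $\beta\ot\beta$ on the left then yields $\beta^{-1}\circ m=\beta^{-1}\ot\beta^{-1}$, that is $\beta^{-1}(hk)=\beta^{-1}(h)\beta^{-1}(k)$; together with $\beta^{-1}(1)=1$ this shows $\beta^{-1}\in\Alg(H,\k)$. Its convolution inverse $\beta=(\beta^{-1})^{-1}=\beta^{-1}\circ\Ss$ is again an algebra map, because $\Ss$ is an anti-homomorphism and $\k$ is commutative, so $\alpha_{|H}=\beta\in\Alg(H,\k)$. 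The closing assertion is then immediate: for the character $\beta$ one has $\beta^{-1}=\beta\circ\Ss$, and since $\Ss_{|H}$ is the antipode of $H$ this equals $(\alpha\circ\Ss)_{|H}$, whence $\alpha_{|H}^{-1}=(\alpha\circ\Ss)_{|H}$.
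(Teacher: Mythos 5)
Your proof is correct and takes essentially the same route as the paper: specialize \eqref{eqn:hopfcohomologous} to $H\ot H$, use $\sigma_{|H\ot H}=\sigma'_{|H\ot H}=\eps$ to reduce to $\eps(h)\eps(k)=\alpha(h_{(1)})\alpha(k_{(1)})\alpha^{-1}(h_{(2)}k_{(2)})$, and then conclude multiplicativity by cancellation (uniqueness of convolution inverses) in $\Hom(H\ot H,\k)$. The paper does this last step in one stroke, identifying $\alpha\ot\alpha$ with $\alpha\circ m$ as left inverses of $\alpha^{-1}\circ m$, while you take a harmless detour showing first that $\beta^{-1}=(\alpha_{|H})^{-1}$ is a character and then transferring to $\beta=\beta^{-1}\circ\Ss$; both arguments are valid and rest on the same facts.
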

\pf
The cocycle $\sigma_\alpha$ associated to $\gamma_\alpha$ is $\sigma'$, which satisfies $\sigma'(h,k)=\eps(hk)$. But
\[
\eps(hk)=\sigma_\alpha(h,k)=\alpha(h_{(1)})\alpha(k_{(1)})\alpha^{-1}(h_{(2)}k_{(2)}),
\]
which shows that $\alpha(hk)=\alpha(h)\alpha(k)$, by uniqueness of the convolution inverse.
\epf

\begin{lemma}\label{lem:alpha-lineal}
	We have $\alpha(xh)=\alpha(x)\alpha(h)$ and $\alpha(hx)=\alpha(x)\alpha(h)$.
	
	In particular, $\alpha_{|B}:B\to \k$ is $H$-linear. That is,
	\[
	\alpha=\alpha'\#\varphi, \quad\text{for } \alpha'\in\Hom_H(B,\k) \text{ and } \varphi\in\Alg(H,\k).
	\]
\end{lemma}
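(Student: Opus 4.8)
The plan is to prove the two multiplicativity identities $\alpha(ah)=\alpha(a)\alpha(h)$ and $\alpha(ha)=\alpha(a)\alpha(h)$ for all $a\in A$ and $h\in H$; the asserted factorization $\alpha=\alpha'\#\varphi$ (with $\alpha'=\alpha_{|B}$, $\varphi=\alpha_{|H}$) is then just a restatement of the first identity, while the $H$-linearity of $\alpha_{|B}$ drops out at the end. The starting point is the previous lemma, which gives $\alpha_{|H}\in\Alg(H,\k)$ and $\alpha^{-1}_{|H}=(\alpha\circ\Ss)_{|H}$, together with the remark that, since both $\sigma$ and $\sigma'=\alpha\ra\sigma$ lie in $Z^2_0(A)$, their mixed values are counit-like: from \eqref{eqn:sigma-intro} and the $H$-linearity of $\eps_B$ one checks $\sigma(u,k)=\eps(u)\eps(k)$ and $\sigma(k,u)=\eps(k)\eps(u)$ for \emph{every} $u\in A$, $k\in H$, and likewise for $\sigma'$.

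First I would extract two clean scalar identities. Writing $\Delta^{(2)}(a)=a_{(1)}\ot a_{(2)}\ot a_{(3)}$ and $\Delta^{(2)}(h)=h_{(1)}\ot h_{(2)}\ot h_{(3)}$ and feeding them into \eqref{eqn:hopfcohomologous}, the factor $\sigma(a_{(2)},h_{(2)})=\eps(a_{(2)})\eps(h_{(2)})$ collapses the middle slot and leaves
\[
\eps(a)\eps(h)=\sigma'(a,h)=\alpha(a_{(1)})\alpha(h_{(1)})\alpha^{-1}(a_{(2)}h_{(2)}).
\]
On the other hand, since $\Delta$ is an algebra map, $\Delta(ah)=a_{(1)}h_{(1)}\ot a_{(2)}h_{(2)}$, so evaluating $\eps=\alpha\ast\alpha^{-1}$ on $ah$ gives $\eps(a)\eps(h)=\alpha(a_{(1)}h_{(1)})\alpha^{-1}(a_{(2)}h_{(2)})$. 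Subtracting the two, the common factor $\alpha^{-1}(a_{(2)}h_{(2)})$ survives:
\[
\bigl[\alpha(a_{(1)}h_{(1)})-\alpha(a_{(1)})\alpha(h_{(1)})\bigr]\,\alpha^{-1}(a_{(2)}h_{(2)})=0.
\]

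The crux of the argument --- and the step where a naive approach would instead force a painful induction on $\deg a$ through the restricted comultiplication, with $\alpha^{-1}$ evaluated on unwieldy mixed products --- is to read this last identity as a convolution equation $X\ast Z=0$ in $\Hom(A\ot H,\k)$, where $X(a\ot h)=\alpha(ah)-\alpha(a)\alpha(h)$, $Z(a\ot h)=\alpha^{-1}(ah)$, and $A\ot H$ carries the tensor-product coalgebra structure. The point is that $Z$ is convolution invertible: setting $W(a\ot h)=\alpha(ah)$ and using once more that $\Delta$ is an algebra map, $(Z\ast W)(a\ot h)=\alpha^{-1}((ah)_{(1)})\alpha((ah)_{(2)})=\eps(ah)=\eps_{A\ot H}(a\ot h)$, and symmetrically $W\ast Z=\eps_{A\ot H}$. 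Hence $X=X\ast Z\ast W=0$, i.e.\ $\alpha(ah)=\alpha(a)\alpha(h)$ for all $a\in A$, $h\in H$. Running the entirely symmetric computation with $\sigma'(h,a)$ and $\eps(ha)$ yields $\alpha(ha)=\alpha(a)\alpha(h)$. I expect this convolution-invertibility reformulation to be the one genuinely nontrivial move; everything around it is bookkeeping, the only delicate point being to verify the counit-like values of $\sigma,\sigma'$ for arbitrary $u\in A$, which is exactly what makes the middle slot collapse uniformly.

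Finally, $H$-linearity of $\alpha_{|B}$ follows formally from the two identities and the last relation in \eqref{eqn:hb}: writing $h\cdot b=h_{(1)}b\Ss(h_{(2)})$ and peeling off the outer $H$-factors via the multiplicativity just proved gives $\alpha(h\cdot b)=\alpha(b)\,\alpha\bigl(h_{(1)}\Ss(h_{(2)})\bigr)=\eps(h)\alpha(b)$. Thus $\alpha'=\alpha_{|B}\in\Hom_H(B,\k)$ and $\varphi=\alpha_{|H}\in\Alg(H,\k)$, and $\alpha(bh)=\alpha'(b)\varphi(h)$ is precisely the assertion $\alpha=\alpha'\#\varphi$.
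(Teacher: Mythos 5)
Your proof is correct and follows essentially the same route as the paper's: both derive the identity $\eps(xh)=\alpha(x_{(1)})\alpha(h_{(1)})\alpha^{-1}(x_{(2)}h_{(2)})$ from the hypothesis $\sigma'=\alpha\rightharpoonup\sigma\in Z^2_0(A)$, conclude multiplicativity by a uniqueness-of-convolution-inverse argument, and obtain the $H$-linearity of $\alpha_{|B}$ from the conjugation formula $h\cdot b=h_{(1)}b\Ss(h_{(2)})$ in \eqref{eqn:hb}. Your one refinement is to locate the inverse argument explicitly in the convolution algebra $\Hom(A\ot H,\k)$, where the relevant maps are genuinely well defined; this makes precise the paper's terser claim that $\psi(xh)=\alpha(x)\alpha(h)$ is a left convolution inverse of $\alpha^{-1}$ and hence equals $\alpha$.
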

\pf
Notice that
\begin{align}\label{eqn:alpha2}
\eps(xh)=\alpha\ra\sigma(x,h)=\alpha(x_{(1)})\alpha(h_{(1)})\alpha^{-1}(x_{(2)}h_{(2)}).
\end{align}
Thus $\psi(xh)=\alpha(x)\alpha(h)$ is a left convolution inverse to $\alpha^{-1}$; hence $\psi=\alpha$.
The other equality is similar, considering $\alpha\ra\sigma(h,x)$.

Now, $\alpha_{|B}(h\cdot x)=\alpha(h_{(1)}x \Ss(h_{2}))=\alpha(h_{(1)})\alpha(x)\alpha(\Ss(h_{2}))=\eps(h)\alpha(x)$.
\epf

\begin{lemma}
	We can assume, without loss of generality, that $\alpha_{|H}=\eps$.
\end{lemma}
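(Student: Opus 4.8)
The plan is to manufacture, out of $\alpha$, a new convolution unit $\tilde\alpha\in U(A^*)$ with $\tilde\alpha\ra\sigma=\sigma'$ but $\tilde\alpha_{|H}=\eps$, so that $\tilde\alpha$ may replace $\alpha$ throughout. The natural candidate is governed by the character part $\varphi=\alpha_{|H}\in\Alg(H,\k)$ produced in Lemma \ref{lem:alpha-lineal}. First I would check that $\beta:=\eps_B\#\varphi$ is an algebra map $A\to\k$: a direct computation with the smash-product multiplication, using the $H$-linearity of $\eps_B$ and that $\varphi$ is a character, gives $\beta\big((a\#h)(b\#k)\big)=\beta(a\#h)\,\beta(b\#k)$, so $\beta\in\Alg(A,\k)\subset U(A^*)$ with $\beta^{-1}=\beta\circ\Ss$. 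Using \eqref{eqn:comultBH} one then verifies the convolution factorization $\alpha=(\alpha'\#\eps)\ast\beta$; writing $\alpha'':=\alpha'\#\eps$ we have $\alpha''_{|H}=\eps$. Since \eqref{eqn:hopfcohomologous} defines a left action of $(U(A^*),\ast)$, we get $\sigma'=\alpha\ra\sigma=\alpha''\ra(\beta\ra\sigma)$, and the whole statement reduces to the claim that $\beta$ stabilises $\sigma$, i.e.\ $\beta\ra\sigma=\sigma$; granting this, $\sigma'=\alpha''\ra\sigma$ and we take $\tilde\alpha=\alpha''$.

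To prove $\beta\ra\sigma=\sigma$ I would first record that, because $\beta$ is a character, \eqref{eqn:hopfcohomologous} collapses to $\beta\ra\sigma=\sigma\circ(C_\beta\ot C_\beta)$, where $C_\beta(x)=\beta(x_{(1)})\,x_{(2)}\,\beta^{-1}(x_{(3)})$ is conjugation by $\beta$; a short Sweedler computation (in which the middle $\beta^{-1}\ast\beta=\eps$ telescopes) shows $C_\beta$ is an algebra automorphism of $A$. Next I would evaluate $C_\beta$ on the two pieces of $A$. On $H$ it is the winding automorphism $h\mapsto\varphi(h_{(1)})\,h_{(2)}\,\varphi^{-1}(h_{(3)})$, which fixes $\eps$. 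On a generator $x_i$ the triple coproduct $\Delta^{(2)}(x_i)=x_i\ot 1\ot 1+(x_i)_{(-1)}\ot(x_i)_{(0)}\ot 1+(x_i)_{(-2)}\ot(x_i)_{(-1)}\ot(x_i)_{(0)}$, obtained from \eqref{eqn:comultBH} and $\underline{\Delta}(x_i)=0$, together with the fact that $\beta$ and $\beta^{-1}$ vanish on $B^+\#1$, leaves only the middle summand, so $C_\beta(x_i)=\varphi((x_i)_{(-1)})(x_i)_{(0)}$. As $C_\beta$ is an algebra map and $\rho$ is an algebra map $B\to H\ot B$, agreement on generators identifies $C_\beta{}_{|B}=\Phi:=(\varphi\ot\id)\rho$, an algebra automorphism of $B$.

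Finally I would descend to $B$. Since $\beta\ra\sigma$ is again a Hopf cocycle and $(\beta\ra\sigma)_{|H\ot H}=\eps$ (the winding automorphism preserves $\eps$ and $\sigma_{|H\ot H}=\eps$), it lies in $Z^2_0(A)$; under the identification $Z^2_0(A)\simeq Z^2(B)^H$ of \eqref{eqn:sigma-intro} and Corollary \ref{cor:primeralinea}(a) it then suffices to check the equality on $B\ot B$, i.e.\ $\sigma_B(\Phi a,\Phi b)=\sigma_B(a,b)$. This is the crux and the step I expect to be the main obstacle. The clean argument is that $\sigma_B$, being a braided Hopf cocycle, is a morphism $B\ot B\to\k$ in $\ydh$, hence $H$-colinear: $\sum a_{(-1)}b_{(-1)}\,\sigma_B(a_{(0)},b_{(0)})=\sigma_B(a,b)\,1_H$. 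Applying the algebra map $\varphi$ to this identity yields exactly $\sigma_B(\Phi a,\Phi b)=\sum\varphi(a_{(-1)})\varphi(b_{(-1)})\,\sigma_B(a_{(0)},b_{(0)})=\sigma_B(a,b)$, whence $\beta\ra\sigma=\sigma$ and $\tilde\alpha=\alpha''$ does the job. The only delicate point is precisely this colinearity: should the ambient definition of $Z^2(B)$ record $H$-linearity alone, I would instead derive the grading identity from the Hopf cocycle axiom \eqref{eqn:hopfcocycle} for $\sigma$ on $A$ evaluated on suitable $H$-inputs, exploiting $\sigma_{|H\ot H}=\eps$ and the relations \eqref{eqn:hb}.
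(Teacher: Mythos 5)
Your reductions are sound as far as they go: $\beta=\eps_B\#\varphi$ is indeed a character of $A$, the factorization $\alpha=(\alpha_{|B}\#\eps)\ast\beta$ holds, \eqref{eqn:hopfcohomologous} is a left action of $U(A^*)$, and your $\alpha''=\alpha\ast\beta^{-1}$ is exactly the paper's $\tilde\alpha=\alpha\ast\tau$, $\tau=\eps\#\varphi^{-1}$. So you have correctly reduced the lemma to the single assertion $\beta\ra\sigma=\sigma$. The gap is that this assertion is false in general, and so is the premise you rest it on. The cocycles $\sigma_B\in Z^2(B)^H$ of this paper are \emph{not} morphisms in $\ydh$: the definition in \S\ref{sec:setting} records $H$-\emph{linearity} only, and $H$-\emph{colinearity} genuinely fails for the cocycles of interest. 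Indeed, for $\sigma_{\bs\lambda}$ of Theorem \ref{thm:sigma-a2}, colinearity evaluated at $(x_1,x_1)$ would read $\lambda_1\,g_1^2=\lambda_1\,1_H$, forcing $g_1^2=1$ whenever $\lambda_1\neq 0$; nothing of the sort is assumed (the restrictions \eqref{eqn:restictions-lambda} constrain the characters $\chi_i$, not the group-likes $g_i$, and the deformation $a_1^2=\lambda_1(1-g_1^2)$ in \eqref{eqn:rels-hopf} is of interest precisely when $g_1^2\neq1$). Concretely, your own formula $\beta\ra\sigma=\sigma\circ(C_\beta\ot C_\beta)$ with $(C_\beta)_{|B}=(\varphi\ot\id)\rho$ gives $\beta\ra\sigma_{\bs\lambda}=\sigma_{\bs\lambda'}$ with $\lambda_1'=\varphi(g_1)^2\lambda_1$, $\lambda_2'=\varphi(g_2)^2\lambda_2$, $\lambda_{12}'=\varphi(g_1g_2)^2\lambda_{12}$. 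Taking $H=\k\Gamma$ with $g_1$ of order $4$ and $\varphi(g_1)=\sqrt{-1}$ (all restrictions \eqref{eqn:restictions-lambda} can still be met), one gets $\beta\ra\sigma_{(\lambda_1,0,0)}=\sigma_{(-\lambda_1,0,0)}\neq\sigma_{(\lambda_1,0,0)}$. Your proposed fallback cannot rescue this: no manipulation of the cocycle axiom will yield an identity that is false, and one can check directly in a rank-one instance ($B=\k[x]/(x^2)$, $\rho(x)=g\ot x$, $g$ of order $4$) that \emph{no} unit trivial on $H$ carries $\sigma_\lambda$ to $\sigma_{-\lambda}$: evaluating \eqref{eqn:hopfcohomologous} at $(x,g)$ and $(g,x)$ forces $\tilde\alpha(x)=0$, whence $(\tilde\alpha\ra\sigma_\lambda)(x,x)=\lambda\neq-\lambda$.

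The obstruction you hit is not an artifact of your route; your transparent packaging in fact exposes the same gap hidden in the paper's own computation. There, the step $\tilde\alpha(x_{(1)}h_{(1)})=\alpha(x_{(1)})\eps(h_{(1)})$ applies the identity $\tilde\alpha(bh)=\alpha(b)\eps(h)$, valid for $b\in B$, to the legs $x_{(1)}$ of $\Delta^{(2)}_A(x)$, which by \eqref{eqn:comultBH} are not in $B$ but carry $H$-components coming from the coaction; on those components $\tilde\alpha$ evaluates as $\eps$ while $\alpha$ evaluates as $\varphi$, and discarding these winding factors is exactly the discrepancy between $\sigma$ and $\beta\ra\sigma$ (the same example also affects Proposition \ref{pro:alpha-extendido}(b), where $\alpha_{|B}=\eps_B$ but $\sigma_B\neq\sigma'_B$). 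What your argument does establish is the correct weaker statement: $\sigma'=\alpha''\ra(\beta\ra\sigma)$ with $\alpha''_{|H}=\eps$, i.e.\ one may normalize $\alpha_{|H}=\eps$ only at the cost of replacing $\sigma$ by the rescaled cocycle $\beta\ra\sigma$, which in the diagonal case stays inside the family $\{\sigma_{\bs\lambda}\}$. The lemma as stated holds under the extra hypothesis that $\varphi$ kills the winding factors (e.g.\ $\varphi(g_i)^2=1$ for all $i$, automatic when the $g_i$ have order two), and the weaker conclusion is harmless for the later results, since the purity criterion of Theorem \ref{thm:sigma-pure} is invariant under $\bs\lambda\mapsto\bs\lambda'$.
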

\pf
Set $\varphi\coloneqq\alpha_{|H}$; if $\varphi\neq \eps$, consider $\tau=\eps\#\varphi^{-1}$ and set $\tilde{\alpha}\coloneqq\alpha\ast\tau$. On the one hand, $\tilde\alpha_{|H}=\eps$, and $\tilde{\alpha}(xh)=\alpha(x)\eps(h)$, $\tilde{\alpha}^{-1}(xh)=\alpha^{-1}(x)\eps(h)$. Now, observe that
\begin{align*}
\tilde\alpha&\ra\sigma(xh,yk)=\tilde\alpha(x_{(1)}h_{(1)})\tilde\alpha(y_{(1)}k_{(1)})\sigma(x_{(2)}h_{(2)}, y_{(2)}k_{(2)})\tilde\alpha^{-1}(x_{(3)}h_{(3)} y_{(3)}k_{(3)})\\
&=\alpha(x_{(1)})\alpha(y_{(1)})\sigma(x_{(2)}h_{(1)}, y_{(2)})\tilde\alpha^{-1}(x_{(3)}h_{(2)}\cdot y_{(3)})\eps(k)\\
&=\alpha(x_{(1)})\alpha(y_{(1)})\sigma(x_{(2)}h_{(1)}, y_{(2)})\alpha^{-1}(x_{(3)}h_{(2)}\cdot y_{(3)})\eps(k)\\
&=\alpha(x_{(1)})\alpha(h_{(1)}\cdot y_{(1)})\sigma(x_{(2)}, h_{(2)}\cdot y_{(2)})\alpha^{-1}(x_{(3)}h_{(3)}\cdot y_{(3)})\eps(k)\\
&=\sigma'(x,h\cdot y)\eps(k)=\sigma'(xh,yk).
\end{align*}
Hence we can replace $\alpha$ by $\tilde{\alpha}$.
\epf

\begin{proposition}\label{pro:alpha-extendido}
	Let $A=B\#H$. Fix $\sigma=\sigma_B\#\eps, \sigma'=\sigma'_B\#\eps\in Z^2(A)$.
	\begin{enumerate}[label=(\alph*)]
		\item If $\alpha'\colon B\to\k$ is convolution invertible, $H$-linear, and $\alpha'\ra\sigma_B=\sigma'_B$, then $\alpha \ra \sigma=\sigma'$, for $\alpha=\alpha'\#\eps$.
		\item If $\alpha\colon A\to\k$ is convolution invertible is such that $\alpha\ra\sigma=\sigma'$, then we have $\alpha_{|B}\ra \sigma_B=\sigma'_B$.
	\end{enumerate}
\end{proposition}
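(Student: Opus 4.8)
The plan is to reduce both assertions to a single computation, namely that the cohomology action on $A=B\# H$, restricted to $B\ot B$, coincides with the braided cohomology action on $B$. Concretely, I would prove the identity
\begin{equation}\tag{$\star$}
(\alpha\ra\sigma)(b,c)=(\alpha_{|B}\ra\sigma_B)(b,c),\qquad b,c\in B,
\end{equation}
valid whenever $\alpha=\alpha_{|B}\#\eps$ with $\alpha_{|B}$ convolution invertible and $H$-linear. Granting $(\star)$, both parts follow quickly. For (a), I first check that $\alpha\ra\sigma\in Z^2_0(A)$: since $\alpha_{|H}=\eps$, $\sigma_{|H\ot H}=\eps$, and $H\simeq 1_B\ot H$ is a Hopf subalgebra of $A$ (so also $\alpha^{-1}_{|H}=\eps$), formula \eqref{eqn:hopfcohomologous} gives $(\alpha\ra\sigma)(h,k)=\eps(hk)$. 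Because every element of $Z^2_0(A)$ is determined by its restriction to $B\ot B$ through \eqref{eqn:sigma-intro}, it suffices to see that $(\alpha\ra\sigma)_{|B\ot B}=\sigma'_B$, which is exactly $(\star)$ combined with the hypothesis $\alpha'\ra\sigma_B=\sigma'_B$. For (b), Lemma \ref{lem:alpha-lineal} and the normalization lemma let me assume $\alpha=\alpha_{|B}\#\eps$ with $\alpha_{|B}$ $H$-linear and invertible (the normalization does not alter $\alpha_{|B}$); restricting the hypothesis $\alpha\ra\sigma=\sigma'$ to $B\ot B$ and invoking $(\star)$ yields $(\alpha_{|B}\ra\sigma_B)(b,c)=\sigma'(b,c)=\sigma'_B(b,c)$ for all $b,c\in B$, which is the claim.

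To establish $(\star)$ I would expand its left-hand side by \eqref{eqn:hopfcohomologous}, computing the iterated coproduct $\Delta_A^{(2)}$ of $b$ and $c$ through the bosonization rule \eqref{eqn:comultBH}. The coaction legs $b_{(2)(-1)}$ produced by \eqref{eqn:comultBH} land in $H$; since $\alpha=\alpha_{|B}\#\eps$ and its convolution inverse is $\alpha_{|B}^{-1}\#\eps$ (a routine check from uniqueness of convolution inverses and the multiplicativity $\alpha(uh)=\alpha(u)\eps(h)$ of Lemma \ref{lem:alpha-lineal}), applying $\alpha$ and $\alpha^{-1}$ annihilates every ``pure $H$'' leg via $\eps$. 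The only places where an $H$-component genuinely survives are the middle factor $\sigma$ applied to the second legs of the $A$-coproducts of $b$ and $c$, where $\sigma=\sigma_B\#\eps$ reintroduces the action through \eqref{eqn:sigma-intro}, and the factor $\alpha^{-1}$ applied to the product in $A$ of the third legs, where \eqref{eqn:hb} intervenes. I would then verify that these surviving $H$-actions recombine precisely into the braiding underlying the braided convolution that defines $\alpha_{|B}\ra\sigma_B$ on $\Hom(B\ot B,\k)$, as in \cite{AG}.

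A convenient simplification I would exploit is Corollary \ref{cor:primeralinea}(b): to prove an identity of braided cocycles on $B$ it suffices to test it on pairs $(x_i,b)$ with $x_i$ a generator and $b\in\mathbb{B}$. Since the generators $x_i$ are primitive in $B$, hence skew-primitive in $A$ with $\Delta_A(x_i)=x_i\ot 1+g_i\ot x_i$, the expansion of $(\alpha\ra\sigma)(x_i,b)$ has very few terms, and Lemma \ref{lem:cuentacociclo} controls the $\sigma$-values that appear. This keeps the coaction bookkeeping to a minimum and is the route I would actually carry out in practice.

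The main obstacle is precisely this bookkeeping: tracking how the $H$-legs scattered by \eqref{eqn:comultBH} across the three tensor slots of $\Delta_A^{(2)}$ are either killed by $\eps$ or converted, through \eqref{eqn:hb} and \eqref{eqn:sigma-intro}, into the Yetter--Drinfeld action, and then checking that the outcome is exactly the braided action $\alpha_{|B}\ra\sigma_B$. This is standard ``bosonization of cocycles'' manipulation, but it is order-sensitive; the delicate point is matching the braiding that appears in the braided convolution on $\Hom(B\ot B,\k)$ with the coactions emerging from \eqref{eqn:comultBH}. I would therefore organize the computation so that all $\eps$-killed terms are removed first, leaving only genuinely braided $B$-terms to identify.
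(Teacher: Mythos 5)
Your proposal is correct and takes essentially the same route as the paper: the computational heart in both cases is expanding $\alpha\ra\sigma$ through the bosonization coproduct \eqref{eqn:comultBH}, using $H$-linearity of $\alpha'$ (together with $\alpha^{-1}=(\alpha')^{-1}\#\eps$ and \eqref{eqn:sigma-intro}) to kill the $H$-legs, and recognizing what survives as the braided action $\alpha'\ra\sigma_B$. The only difference is organizational: the paper carries out this computation at general elements $xh,yk\in A$ and so concludes (a) directly, whereas you verify the identity only on $B\ot B$ and then invoke the correspondence $Z^2_0(A)\simeq Z^2(B)^H$ plus the (easy, and correctly flagged) check that $\alpha\ra\sigma$ is trivial on $H\ot H$; both versions are sound, and your handling of (b) via Lemma \ref{lem:alpha-lineal} and the normalization lemma matches the paper's.
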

\pf
$(a)$ We have that $\alpha'\ra\sigma_B=\sigma'_B$, that is for $x,y\in B$,
\[
\alpha'(x_{(1)})\alpha'(x_{(3)(-2)}x_{(2)(-1)}\cdot y_{(1)})\sigma_B(x_{(2)(0)},x_{(3)(-1)}y_{(2)})\alpha'{}^{-1}(x_{(3)(0)}y_{(3)})=\sigma'_B(x,y).
\]
Here $x_{(1)}\ot x_{(2)}=\Delta_B(x)$ stands for the coproduct in $B$. Recall that the coproduct
$\Delta(xh)=x_{(1)}x_{(2)(-1)}h_{(1)}\ot x_{(2)(0)}h_{(2)}$ in $A$ from \eqref{eqn:comultBH} so
\begin{align*}
\Delta^{(2)}(xh)&=x_{(1)}x_{(2)(-1)}x_{(3)(-2)}h_{(1)} \ot  x_{(2)(0)}x_{(3)(-1)}h_{(2)}\ot x_{(3)(0)}h_{(3)}.
%\\&{=}x_{(1)}x_{(2)(-1)}\ot x_{(2)(0)(1)}x_{(2)(0)(2)(-1)}\ot x_{(2)(0)(2)(0)}
\end{align*}
Now,  $\alpha\rightharpoonup \sigma(xh,yk)$ gives, using that that $\alpha'$ is $H$-linear:
\begin{align*}
\alpha(x_{(1)}&x_{(2)(-1)}x_{(3)(-2)} h_{(1)})\alpha(y_{(1)}y_{(2)(-1)}y_{(3)(-2)} k_{(1)})\\
&\hspace*{1.5cm} \times\sigma(x_{(2)(0)}x_{(3)(-1)}h_{(2)},y_{(2)(0)}y_{(3)(-1)}k_{(2)})\alpha^{-1}(x_{(3)(0)}h_{(3)}y_{(3)(0)}k_{(3)})\\
&=\alpha'(x_{(1)})\alpha'(y_{(1)})\sigma_B(x_{(2)},x_{(3)(-1)}h_{(1)}\cdot y_{(2)}) \alpha'{}^{-1}(x_{(3)(0)}h_{(2)}y_{(3)})\eps(k)\\
&=\alpha'(x_{(1)})\alpha'(x_{(3)(-2)}x_{(2)(-1)}h_{(1)}\cdot y_{(1)})\\
&\hspace*{1.5cm} \times\sigma_B(x_{(2)(0)},x_{(3)(-1)}h_{(2)}y_{(2)})\alpha'{}^{-1}(x_{(3)(0)}h_{(3)}y_{(3)})\eps(k)\\
&=\sigma'_B(x,h\cdot y)\eps(k)=\sigma'(xh,yk).
\end{align*}

$(b)$ Follows as $(a)$, using that $\alpha'$ is $H$-linear by Lemma \ref{lem:alpha-lineal} and the compatibility between $\Delta_B(x)$ and $\Delta(x\#1)$, see \eqref{eqn:comultBH}.
\epf

Next lemma will also allow us to reduce computations when dealing with Hopf cocycles arising as exponentials of Hochschild cocycles.

\begin{lemma}\label{lem:cuentaalpha}
Let $\tau,\sigma\in Z^2(B)^H$ and let $\alpha\in B^\ast$ be convolution invertible.
	\begin{enumerate}[label=(\alph*)]
		\item For any $x\in V$, $b\in\mathbb{B}$, we have that
		\[
		\alpha\rightharpoonup \tau (x,b)=\alpha(x_{(-1)}\cdot b_{(1)})\tau(x_{(0)}, b_{(2)})\alpha^{-1}(b_{(3)})+\alpha(x_{(-1)}\cdot b_{(1)})\alpha^{-1}(x_{(0)}b_{(2)}).
		\]
		\item Assume that $\alpha\rightharpoonup \tau=\sigma$. Then
		for any $x,y\in V$, we have that
\begin{align}\label{eqn:alpha-grado1}
\alpha(x)\alpha(y)-\alpha(xy)=\sigma(x,y)-\tau(x,y).
\end{align}
	\end{enumerate}
\end{lemma}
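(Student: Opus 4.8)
The plan is to derive part (b) from part (a), and to prove part (a) by the same strategy used for Lemma \ref{lem:cuentacociclo}, now applied to the cohomologous cocycle $\alpha\rightharpoonup\tau$ instead of $\tau$. Writing the braided version of \eqref{eqn:hopfcohomologous} as a convolution $\alpha\rightharpoonup\tau=(\alpha\ot\alpha)\ast\tau\ast(\alpha^{-1}\circ m)$ in $\Hom(B\ot B,\k)$, the first step is to expand $\Delta^{(2)}_{B\ot B}(x\ot b)$. Since $x\in V$ is primitive, the braided coproduct of $B\ot B$ gives
\[
\Delta_{B\ot B}(x\ot b)=(x\ot b_{(1)})\ot(1\ot b_{(2)})+(1\ot x_{(-1)}\cdot b_{(1)})\ot(x_{(0)}\ot b_{(2)}),
\]
exactly as in the proof of Lemma \ref{lem:cuentacociclo}; applying $\Delta_{B\ot B}$ once more produces three summands, according to where the surviving copy of $x$ lands.

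Next I would evaluate $(\alpha\ot\alpha)\ot\tau\ot(\alpha^{-1}\circ m)$ on these three summands. The summand carrying $x$ in the first leg collapses, via $\tau(1,-)=\eps$ and $\alpha\ast\alpha^{-1}=\eps$, to $\alpha(x)\eps(b)$, which vanishes for $b\in\mathbb{B}^+$ (the only case of interest). The summand carrying $x$ in the middle leg yields $\alpha(x_{(-1)}\cdot b_{(1)})\tau(x_{(0)},b_{(2)})\alpha^{-1}(b_{(3)})$ at once. The summand carrying $x$ in the last leg produces $\alpha(x_{(-2)}\cdot b_{(1)})\eps(x_{(-1)}\cdot b_{(2)})\alpha^{-1}(x_{(0)}b_{(3)})$, and here I would use that the counit of $B$ is $H$-linear, $\eps(x_{(-1)}\cdot b_{(2)})=\eps(x_{(-1)})\eps(b_{(2)})$, together with the comodule counit axiom, to collapse it to $\alpha(x_{(-1)}\cdot b_{(1)})\alpha^{-1}(x_{(0)}b_{(2)})$. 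Adding the two surviving contributions gives the formula in (a).

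For (b) I would specialize (a) to $b=y\in V$, which is legitimate by linearity in $b$. Since $y$ is primitive, $\Delta^{(2)}(y)$ has three terms and the first summand of (a) simplifies: using that $\tau$ is normalized, that $\eps_B(x_{(0)})=0$ (the coaction preserves the grading, so $x_{(0)}\in V$), and the comodule counit axiom, it collapses to $\tau(x,y)$. For the second summand of (a), expanding $\Delta_B(y)=y\ot 1+1\ot y$ gives $\alpha(x_{(-1)}\cdot y)\alpha^{-1}(x_{(0)})+\alpha^{-1}(xy)$. The \emph{key observation} is that applying $\alpha\ast\alpha^{-1}=\eps$ to $xy\in B_2$, whose restricted coproduct is $\underline{\Delta}(xy)=x\ot y+(x_{(-1)}\cdot y)\ot x_{(0)}$, yields
\[
0=\alpha(xy)+\alpha(x)\alpha^{-1}(y)+\alpha(x_{(-1)}\cdot y)\alpha^{-1}(x_{(0)})+\alpha^{-1}(xy);
\]
combined with $\alpha^{-1}(y)=-\alpha(y)$ for $y\in V$ (itself a consequence of $\alpha\ast\alpha^{-1}=\eps$ on the primitive $y$), this shows the second summand equals $\alpha(x)\alpha(y)-\alpha(xy)$. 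Since $\alpha\rightharpoonup\tau=\sigma$, summing the two contributions gives $\sigma(x,y)=\tau(x,y)+\alpha(x)\alpha(y)-\alpha(xy)$, which is \eqref{eqn:alpha-grado1}.

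I expect the main obstacle to be purely bookkeeping: correctly tracking the braided coproduct $\Delta_{B\ot B}$ and the nested coaction splittings (terms of the form $x_{(-2)}\ot x_{(-1)}\ot x_{(0)}$) in part (a), and justifying each collapse by the correct axiom, namely the $H$-linearity of $\eps_B$, the comodule counit identity, and $\alpha\ast\alpha^{-1}=\eps$. No idea beyond Lemma \ref{lem:cuentacociclo} is needed; the only conceptual point in (b) is recognizing that the coaction term $\alpha(x_{(-1)}\cdot y)\alpha^{-1}(x_{(0)})$ is exactly the one produced by the braided middle term of $\underline{\Delta}(xy)$, so that it cancels against it.
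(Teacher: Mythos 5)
Your proof is correct and takes essentially the same route as the paper's: part (a) expands $\Delta^{(2)}(x\ot b)$ into the same three summands and collapses them via the normalization of $\tau$, the counit axioms and $\alpha\ast\alpha^{-1}=\eps$, while part (b) specializes to $b=y\in V$ and eliminates $\alpha^{-1}(xy)$ using the identity coming from $\underline{\Delta}(xy)=x\ot y+(x_{(-1)}\cdot y)\ot x_{(0)}$ together with $\alpha^{-1}(y)=-\alpha(y)$, which is exactly the paper's \eqref{eqn:alpha-1xy}. Your only deviation is cosmetic and in fact slightly more careful: you record that the discarded first summand equals $\alpha(x)\eps(b)$ and vanishes for $b\in\mathbb{B}^+$, a point the paper leaves implicit.
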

\begin{proof}
(1)	Let $x\in V$ and $b\in \mathbb{B}$, then
	\begin{align*}
	(\id\ot \Delta)\Delta(x\ot b)=& \, x\ot b_{(1)}\ot 1\ot b_{(2)}\ot 1\ot b_{(3)}\\
&+1\ot x_{(-1)}\cdot b_{(1)}\ot x_{(0)}\ot b_{(2)}\ot 1\ot b_{(3)}\\
&+1\ot x_{(-2)}\cdot b_{(1)}\ot1\ot x_{(-1)}\cdot b_{(2)}\ot x_{(0)}\ot b_{(3)}.
	\end{align*}
	When we act by $\alpha$, we get
	\begin{align*}
	\alpha\ra\tau(x,b)=& \,\alpha(x)\alpha(b_{(1)})\tau(1,b_{(2)})\alpha^{-1}(b_{(3)})+\alpha(x_{(-1)}\cdot b_{(1)})\tau(x_{(0)}, b_{(2)})\alpha^{-1}(b_{(3)})\\
&+\alpha(x_{(-2)}\cdot b_{(1)})\tau(1, x_{(-1)}\cdot b_{(2)})\alpha^{-1}(x_{(0)}b_{(3)})\\
=& \,\alpha(x)\alpha(b_{(1)})\eps(b_{(2)})\alpha^{-1}(b_{(2)})+\alpha(x_{(-1)}\cdot b_{(1)})\tau(x_{(0)}, b_{(2)})\alpha^{-1}(b_{(3)})\\
&+\alpha(x_{(-1)}\cdot b_{(1)})\alpha^{-1}(x_{(0)}b_{(2)})\\
=&\,\alpha(x_{(-1)}\cdot b_{(1)})\tau(x_{(0)}, b_{(2)})\alpha^{-1}(b_{(3)})+\alpha(x_{(-1)}\cdot b_{(1)})\alpha^{-1}(x_{(0)}b_{(2)}).
	\end{align*}
(2) Use the formula in (1) to compute $\alpha\rightharpoonup \tau(x,y)=\sigma(x,y)$, then
\[\alpha(x_{(-1)}\cdot y)\alpha^{-1}(x_{(0)})+\alpha^{-1}(xy)=\sigma(x,y)-\tau(x,y).\]
Now, as
\begin{align}\label{eqn:alpha-1xy}
\alpha^{-1}(xy)=\alpha(x)\alpha(y)-\alpha(x_{(-1)}\cdot y)\alpha^{-1}(x_{(0)})-\alpha(xy),
\end{align}
we obtain \eqref{eqn:alpha-grado1}.
\end{proof}

\section{Invariant Hochschild cohomology}\label{sec:hoch}

We let $B$, $H$, $A=B\# H$ be as in \S \ref{sec:setting}. Recall that $\{x_1,\dots,x_\theta\}$ denotes a linear basis of $V\coloneqq B_1$, contained in an homogeneous basis $\mathbb{B}$ of $B$.

Let $M$ be an $A$-bimodule; hence a $B$-bimodule by restriction.
In this section we focus on the Hochschild cohomology $\H^{\bullet}$ of $A$ with coefficients on $M$ and the relation with $\H^{\bullet}(B,M)$.

In particular, we present a proof of a Theorem of \c{S}tefan in \cite{St}, which shows that there is an action of $H$ on $\H^{\bullet}(B,M)$ in such a way that $\H^{\bullet}(A,M)\simeq \H^{\bullet}(B,M)^H$. We give an explicit description of this action in our context.

As well, we will be particularly interested in the case $M=\k$.
We write $\Z^{\bullet}(A,M)$ for the space of cocycles and  $\B^{\bullet}(A,M)$ for the cobordisms. When $M=\k$, we may simply write $\H^{\bullet}(A), \Z^{\bullet}(A), \B^{\bullet}(A)$. We denote by $\Z^{\bullet}(B)^H\subset \Z^{\bullet}(B)$ the subset of $H$-invariant cocycles. The subset $\Z^{2}(B)^H$ will be of big relevance in our work.

We refer the reader to \cite{book-cohom} for details on Hochschild cohomology.

\subsection{Hochschild 2-cocycles with coefficients in $\k$}

Let $B^+=\bigoplus_{n > 0} B_n$.
We begin with an alternative description of the set $\Z^2(B,\k)$.

\begin{lemma}\label{lem:hoch}
Let $\eta:B\ot B\to \k$ be a linear map.	The following are equivalent.
	\begin{enumerate}[label=(\alph*)]
\item  $\eta\in \Z^2(B,\k)$.
\item $\eta:B\ot B\to \k$ satisfies, for $a,c\in B^+$,
\begin{align}
\label{cond1}
\eta(a,1)&=\eta(1,c)=0,\\
\label{cond2a}
\eta(a,bc)&=\eta(ab,c), \quad b\in B.
\end{align}
	\end{enumerate}
Similarly, $\eta\in \B^2(B,\k)$ if and only if \eqref{cond1} holds and there is a linear map $f\colon B\to \k$ such
that $\eta(1,1)=f(1)$ and
\[
\eta(a,b)=-f(ab), \quad a,b\in B^+.
\]
\end{lemma}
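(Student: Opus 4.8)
The plan is to work directly with the Hochschild cochain complex of $B$ with coefficients in the trivial bimodule $\k$, where both the left and right actions are given by the counit $\eps$ (recall $B$ is connected graded, so $\eps$ is the projection onto $B_0=\k$ and in particular $\eps(B^+)=0$). With this, the coboundary maps collapse considerably: a $1$-cochain $f\colon B\to\k$ has
\[
(df)(a,b)=\eps(a)f(b)-f(ab)+\eps(b)f(a),
\]
and a $2$-cochain $\eta\colon B\ot B\to\k$ has
\[
(d\eta)(a,b,c)=\eps(a)\eta(b,c)-\eta(ab,c)+\eta(a,bc)-\eps(c)\eta(a,b).
\]
So $\eta\in\Z^2(B,\k)$ means precisely that the right-hand side of the second display vanishes for all $a,b,c\in B$.

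The key reduction is that all the maps in sight are multilinear, so it suffices to test the cocycle identity on arguments lying in the direct summands of $B=\k 1\oplus B^+$; that is, on the $2^3=8$ triples $(a,b,c)$ with each entry either $1$ or in $B^+$. First I would run through these eight cases. The cases with arguments equal to $1$ together with ones in $B^+$ either give $0$ identically or reduce, using $\eps(B^+)=0$, to the single relations $\eta(a,1)=0$ and $\eta(1,c)=0$ for $a,c\in B^+$, which is \eqref{cond1}; here one also uses $B^+\cdot B^+\subseteq B^+$, so that the terms $\eta(1,bc)$ and $\eta(ab,1)$ arising from the remaining mixed cases are again covered by \eqref{cond1}. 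The all-positive case $a,b,c\in B^+$ reduces, again by $\eps(B^+)=0$, exactly to $\eta(a,bc)=\eta(ab,c)$, which is \eqref{cond2a} (the value $b=1$ of \eqref{cond2a} being trivially true, so the case $b\in B^+$ suffices by linearity). This proves (a)$\Rightarrow$(b). For the converse I would substitute \eqref{cond1} and \eqref{cond2a} back into the eight evaluations of $(d\eta)(a,b,c)$ and check each is $0$; this is the same computation read in reverse.

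For the coboundary statement, recall $\eta\in\B^2(B,\k)$ means $\eta=df$ for some $f\colon B\to\k$. Using the formula for $(df)(a,b)$ and $\eps(B^+)=0$, evaluating on the four corners of $B=\k 1\oplus B^+$ gives $(df)(1,1)=f(1)$, then $(df)(a,b)=-f(ab)$ for $a,b\in B^+$, and $(df)(a,1)=(df)(1,c)=0$ for $a,c\in B^+$; this shows every coboundary has the asserted shape and satisfies \eqref{cond1}. Conversely, given \eqref{cond1} and an $f$ with $\eta(1,1)=f(1)$ and $\eta(a,b)=-f(ab)$ on $B^+\ot B^+$, I would observe that these data determine $\eta$ on every corner of $B\ot B$ (the mixed corners vanishing by \eqref{cond1}), and that they agree corner-by-corner with $df$; hence $\eta=df\in\B^2(B,\k)$.

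I do not expect a genuine obstacle here: the content is entirely the simplification of the Hochschild differential afforded by the trivial action $\eps$, after which everything is a finite, bi-/trilinear case check on the splitting $B=\k 1\oplus B^+$. The only points requiring a little care are bookkeeping the eight cases without sign errors and repeatedly invoking $\eps(B^+)=0$ and $B^+\cdot B^+\subseteq B^+$, so that the boundary cases fold into \eqref{cond1}.
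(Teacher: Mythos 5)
Your proof is correct and follows essentially the same route as the paper's: both write out the Hochschild cocycle and coboundary conditions for the trivial bimodule $\k$ and evaluate them on the splitting $B=\k 1\oplus B^+$, using $\eps(B^+)=0$ and $B^+\cdot B^+\subseteq B^+$ to fold the boundary cases into \eqref{cond1}. The only differences are cosmetic: the paper splits cases on $a$ and $c$ alone, keeping $b$ arbitrary (four cases rather than your eight), and your corner-by-corner verification of the coboundary converse is a bit more explicit than the paper's.
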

\pf
Recall that $\eta\in\Z^2(B,\k)$ if and only if, for $a,b,c\in B$:
\begin{equation}\label{hoch}
\eps(a)\eta(b,c)+\eta(a,bc)=\eta(ab,c)+\eta(a,b)\eps(c).
\end{equation}
Now, if $a,c\in B^+$, then this becomes \eqref{cond2a}.
On the other hand, if $a=\eps(a)\in B_0=\k$ and $c\in B^+$ (or viceversa), then this is
\begin{align*}
\eps(a)\eta(b,c)+\eps(a)\eta(1,bc)=\eps(a)\eta(b,c)
\Rightarrow \eta(1,bc)=0 \qquad (\text{or }\eta(ab,1)=0).
\end{align*}
If $a,c\in B_0=\k$ then  \eqref{hoch} is tautological.

Recall that $\eta\in\B^2(B,\k)$ if and only if there is $f\colon B\to \k$ in such a way that
\[
\eta(a,b)=\eps(a)f(b)-f(ab)+f(a)\eps(b).
\]
This is in turn equivalent to
$\eta(a,b)=-f(ab)$, $a,b\in B^+$.
\epf

We compute an easy example in \eqref{eqn:etabi} to fix some notation. We will apply Lemma \ref{lem:hoch} to write down in \S \ref{sec:hoch-a2} two more non-trivial examples in \eqref{eqn:xi-121} and \eqref{eqn:xi-212}.

\begin{lemma}\label{exa:xi_ij}
For each $i,j\in I_\theta$, let $\xi_j^i\colon B\ot B\to \k$ be the linear map defined by
\begin{align}\label{eqn:etabi}
\xi_j^i(b,b')=\delta_{b,x_j}\delta_{x_i,b'}, \ b,b'\in\mathbb{B}.
\end{align}
Then $\xi_j^i\in \Z^2(B,\k)$.\hfill\qed
\end{lemma}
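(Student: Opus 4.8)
The plan is to verify conditions (a), (b) of Lemma \ref{lem:hoch} for the map $\xi_j^i$, since that lemma reduces the cocycle condition to the two identities \eqref{cond1} and \eqref{cond2a}. First I would check \eqref{cond1}: for $a,c\in B^+$ we need $\xi_j^i(a,1)=\xi_j^i(1,c)=0$. This is immediate from the definition \eqref{eqn:etabi}, because $1\notin\{x_1,\dots,x_\theta\}$ (the $x_k$ all lie in $V=B_1$, while $1\in B_0$), so both Kronecker deltas $\delta_{1,x_j}$ and $\delta_{x_i,1}$ vanish. Hence $\xi_j^i$ is supported on $V\otimes V$.

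The substantive step is \eqref{cond2a}, namely $\xi_j^i(a,bc)=\xi_j^i(ab,c)$ for all $a,c\in B^+$ and $b\in B$. Here I would exploit precisely the fact just observed: $\xi_j^i$ only sees the $V\otimes V$ component of its argument. Writing everything in the homogeneous basis $\mathbb{B}$ and using that $\xi_j^i$ is bilinear, both sides are nonzero only when the relevant tensor factors are degree-one basis vectors $x_j$ and $x_i$ respectively. The left-hand side $\xi_j^i(a,bc)$ requires $a=x_j$ (so $a$ has degree $1$) and the degree-one part of $bc$ to involve $x_i$; but if $a=x_j\in V$ and we also need $bc\in V$, then since $a,c\in B^+$ forces $\deg a,\deg c\geq 1$, the only contributions come from configurations where the total relevant degree matches on both sides. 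The clean way to package this is to extend $\xi_j^i$ to the multiplication map: observe that $\xi_j^i(u,v)$ equals the coefficient extraction $\langle x_j^*\otimes x_i^*, u\otimes v\rangle$ under projection to $V\otimes V$, and both $a\cdot(bc)$ and $(ab)\cdot c$ have the same degree-$2$ component in $V\otimes V$ only through terms forced by associativity of the product together with the grading constraint $a,c\in B^+$.

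The cleanest argument I would actually write: since $\xi_j^i$ vanishes unless its first entry equals $x_j$ and its second equals $x_i$ (as elements of $\mathbb{B}$), evaluate both sides on basis monomials. The left side $\xi_j^i(a,bc)$ is nonzero iff $a=x_j$ and $bc=x_i$ in $\mathbb{B}$; because $c\in B^+$ has degree $\geq 1$ and $x_i$ has degree $1$, this forces $\deg b=0$, i.e.\ $b\in\k$, whence $bc=c$ and the condition reads $a=x_j,\ c=x_i,\ b=1$. Symmetrically the right side $\xi_j^i(ab,c)$ is nonzero iff $ab=x_j$ and $c=x_i$; with $a\in B^+$ of degree $\geq 1$ this forces $b\in\k$, $a=x_j$, $c=x_i$. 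Thus both sides are supported on exactly the same configuration $a=x_j$, $c=x_i$, $b\in\k$, where they agree (each equals $\eps(b)$ times $1$). Therefore \eqref{cond2a} holds and, by Lemma \ref{lem:hoch}, $\xi_j^i\in\Z^2(B,\k)$.

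The main obstacle to watch for is the grading bookkeeping in \eqref{cond2a}: one must be careful that $b$ ranges over all of $B$ (not just $B^+$), so the degree-counting argument has to rule out, using $a,c\in B^+$, any stray contribution where $b$ carries positive degree. Once it is clear that positivity of both $a$ and $c$ pins down $b$ to be a scalar, the two sides collapse to the same value and the verification is complete.
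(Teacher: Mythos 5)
Your proof is correct and follows exactly the route the paper intends: the paper states this lemma with the proof omitted as an easy computation, precisely because it is the direct verification of conditions \eqref{cond1} and \eqref{cond2a} of Lemma \ref{lem:hoch} that you carry out. Your grading argument—positivity of $\deg a$ and $\deg c$ forces $b\in\k$, collapsing both sides of \eqref{cond2a} to $\eps(b)\,\delta_{a,x_j}\delta_{c,x_i}$—is the right bookkeeping and closes the only point where care is needed.
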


\begin{remark}\label{rem:hoch}
{\it In the sense} of Lemma \ref{lem:descomposi}, \eqref{cond2a} states that $\eta$ is determined by the values in the set $\{\eta(b,x_i)|b\in\mathbb{B},i\in\I_\theta\}\cup\{\eta(1,1)\}$.
\end{remark}
%
%In particular, \eqref{cond2a} states that $\eta$ is determined by $\{\eta(b,x_i)|b\in\mathbb{B},i\in\I_\theta\}$ and $\eta(1,1)$.
%For each $b\in\mathbb{B}$ and $i\in\I_\theta$, we define the cocycle $\xi_b^i\in \Z^2(B,\k)$ via:
%\begin{align}\label{eqn:etabi}
%\xi_b^i(b',j)=\delta_{b,b'}\delta_{i,j}, \ b'\in\mathbb{B}, j\in\I_\theta.
%\end{align}
%Analogously, we define $\eta_{i,b}\in \Z^2(B,\k)$.
%We have the following useful remark.
%\begin{lemma}\label{lem:bxi} Let $B$ and $(\xi_b^i)_{b\in\mathbb{B}, i\in\I_\theta}\in \Z^2(B,\k)$ be as above.
%\begin{enumerate}[label=(\alph*)]
%\item $\Z^2(B,\k)$ is generated by the set $\{\eta(1,1),\xi_b^i: b\in\mathbb{B},i\in\I_\theta\}$.
%\item $\H^2(B,\k)$ is generated by the set of classes $\{[\xi_b^i]: b\in\mathbb{B},i\in\I_\theta, bx_i= 0\}$.
%\end{enumerate}	
%\end{lemma}
%There is an analogous version of this lemma in terms of  $(\eta(x_i,b))_{b\in\mathbb{B}, i\in\I_\theta}$.
%
%\pf
%Indeed, if $\eta\in\Z^2(B,\k)$, then
%\[
%\eta=\sum_{b,i}\eta(b,x_i)\xi_b^i.
%\]
%Now, if $bx_i\neq 0$ in $B$, then $\xi_b^i$ is the cobordism associated to  $f=f_{b\,i}\colon B\to \k$, given by $f(x)=-\delta_{bx_i,x}$.
%\epf
%
%
%\begin{remark}
%	If $H\cdot x_1=B_1$, then $\eta$ is determined by the values  $\{\eta(b,x_1)|b\in\mathbb{B}\}$.
%\end{remark}

\subsection{\c{S}tefan's Theorem revisited}
The following result, in general terms is due to \c{S}tefan \cite{St}, and it is of central importance in our work.
It is in fact a compendium of several results in {\it loc.cit.}, which we have adapted to our setting, and this allows us to find explicit formulas that are implicit in Stefan's work. It is our belief that it is in this shape that might evoke more interest in the scope of this article. In particular, we state it in terms of left actions because of this idea.

\begin{theorem}\label{thm:stefan}
Let $H$ be a Hopf algebra with bijective antipode and let $B\in\ydh$. Let $A=B\# H$ and let $M$ be an $A$-bimodule.
	
Then there is a left $H$-action $\ra$ on $\H^q(B,M)$, $q\geq 0$.
\begin{itemize}
	\item For $q=0$, this is
	\begin{align}\label{eqn:H0-action}
	h\ra m=h_{(1)}\cdot m\cdot \Ss(h_{(2)}),
	\end{align}
	for $h\in H$ and $m\in \H^0(B,M)=\{m:b\cdot m=m\cdot b, \forall\,b\in B\}\subseteq M$.
	\item For $q>0$, this is
	\begin{align}\label{eqn:Hq-action}
	(h\ra f)({\bf x})=h_{(1)}\cdot f(\Ss(h_{(2)})\cdot {\bf x})\cdot \Ss(h_{(3)}),
	\end{align}
	for $h\in H$ and $f\in \Z^q(B,M)\subset\Hom_\k(B^{\ot\,q}, M)$, ${\bf x}=(x_1,\cdots, x_q)\in B^{\ot\,q}$.
\end{itemize}
	 If $H$ is semisimple, then
	 \begin{equation}\label{eqn:invariance}
	 	\H^n(A,\k)\simeq \H^n(B,\k)^H.
	 \end{equation}
\end{theorem}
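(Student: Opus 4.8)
The plan is to establish the theorem in three stages, building the $H$-action explicitly and then deducing the invariance isomorphism \eqref{eqn:invariance} in the semisimple case. First I would verify that the formulas \eqref{eqn:H0-action} and \eqref{eqn:Hq-action} genuinely define a left $H$-action on the cochain level, and that this action descends to cohomology. For $q=0$ this amounts to checking that $h\ra m=h_{(1)}\cdot m\cdot\Ss(h_{(2)})$ lands in $\H^0(B,M)=\{m:b\cdot m=m\cdot b\}$; this is a direct computation using the Yetter–Drinfeld compatibility and the relations \eqref{eqn:hb} that govern how $H$ and $B$ interact inside $A=B\#H$. For $q>0$, the work is to confirm that $(h\ra f)$ is again a Hochschild cocycle whenever $f$ is, and that $h\ra(-)$ sends coboundaries to coboundaries; both reduce to pushing the antipode terms $\Ss(h_{(2)}),\Ss(h_{(3)})$ through the differential and using that the $H$-action on $B^{\ot q}$ (diagonal, via the braiding-compatible action on each tensor factor) is compatible with the bimodule structure on $M$. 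The associativity $g\ra(h\ra f)=(gh)\ra f$ of the action is then a formal consequence of $\Ss$ being an anti-homomorphism together with coassociativity.

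The second stage is the heart of the matter: producing the isomorphism $\H^n(A,M)\simeq\H^n(B,M)^H$ at the level of complexes, independently of semisimplicity. The natural route is to compare the bar-type resolutions computing $\H^\bullet(A,M)$ and $\H^\bullet(B,M)$. Since $A=B\#H$ is a smash product, there is a standard change-of-rings / spectral-sequence argument (this is \c{S}tefan's original strategy) relating $\H^\bullet(A,M)$ to the $H$-(co)invariants of $\H^\bullet(B,M)$. Concretely, I would exhibit an explicit cochain map $\Hom_\k(A^{\ot n},M)\to\Hom_\k(B^{\ot n},M)$ by restriction along $B^{\ot n}\hookrightarrow A^{\ot n}$, show its image lands in the $H$-invariants $\Z^n(B,M)^H$ for the action \eqref{eqn:Hq-action}, and construct an inverse using the inclusion $B\hookrightarrow A$ together with the fact that every element of $A$ factors as $bh$ with $b\in B$, $h\in H$. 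The reconstruction of an $A$-cochain from an $H$-invariant $B$-cochain is exactly where the formula \eqref{eqn:Hq-action} is forced: $H$-invariance is precisely the condition that allows the $B$-cochain to be extended consistently over the $H$-directions of $A^{\ot n}$.

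The third stage specializes to $M=\k$ and uses semisimplicity to upgrade the cochain-level correspondence to an honest isomorphism on cohomology. When $H$ is semisimple over a field of characteristic zero, the functor $(-)^H$ of taking $H$-invariants is exact (equivalently, there is a normalized integral, and averaging gives a projection onto invariants commuting with the differential). Hence taking $H$-invariants commutes with passing to cohomology, so $\H^n(B,\k)^H=\H^n\bigl(\Z^\bullet(B,\k)^H\bigr)$, and combined with stage two this yields \eqref{eqn:invariance}. I expect the main obstacle to be stage two, the cochain-level identification: one must choose the resolutions carefully so that the restriction map is a genuine map of complexes and so that the claimed action \eqref{eqn:Hq-action} is literally the action induced on invariants, rather than merely cohomologous to it. The antipode bookkeeping — tracking $\Ss(h_{(2)})\cdot\mathbf{x}$ and the flanking $h_{(1)}\cdot(-)\cdot\Ss(h_{(3)})$ through the bar differential and verifying they assemble into the smash-product differential — is the calculation most likely to hide sign or Sweedler-index errors, and is where the bijectivity of $\Ss$ (needed to invert the action on $B^{\ot q}$) is essential.
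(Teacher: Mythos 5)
There is a genuine gap, and it sits exactly where you locate the ``heart of the matter'': your stage two cannot work as described. You propose a cochain-level correspondence between $\Hom_\k(A^{\ot n},M)$ and the $H$-invariant $B$-cochains, claiming that restriction along $B^{\ot n}\hookrightarrow A^{\ot n}$ lands in $\Z^n(B,M)^H$ and that this sets up an isomorphism ``independently of semisimplicity.'' Both claims fail. Restriction of an $A$-cocycle to $B^{\ot n}$ is in general \emph{not} $H$-invariant; the paper itself records this in \S 4.3: the restriction $\eta_{|B}$ of $\eta\in \Z^2(B\#H,\k)$ is a cocycle but ``not necessarily $H$-invariant,'' and one must average against an integral $\int$ to project onto $\Z^2(B,\k)^H$ --- which is precisely where semisimplicity enters and cannot be postponed to a final stage. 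Moreover, no complex-level identification independent of semisimplicity can exist, because the conclusion \eqref{eqn:invariance} is simply false without it: take $B=\k$, so $A=H$ and $\H^n(B,M)^H=0$ for $n>0$, while $\H^n(H,M)$ is generally nonzero for non-semisimple $H$. The true statement behind the theorem is \c{S}tefan's spectral sequence $\H^p(H,\H^q(B,M))\Rightarrow \H^{p+q}(A,M)$, whose collapse to \eqref{eqn:invariance} is exactly the vanishing of higher $H$-cohomology, i.e.~semisimplicity. The extension map \eqref{eqn:hoch-extension} you invoke does split the (averaged) restriction on invariants, but the two composites are not both the identity on cochains, so no isomorphism of complexes results.

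The paper's route is structurally different and avoids this trap. It does not attempt to re-prove the isomorphism \eqref{eqn:invariance} at all --- that is quoted from \cite[(3.6.1)]{St} --- nor does it build a cochain-level dictionary between $A$-cochains and invariant $B$-cochains. Its actual content is the identification of the \emph{explicit formula} \eqref{eqn:Hq-action} with \c{S}tefan's abstractly defined action: since $\H^\bullet(A,-)\circ R$ is a cohomological, coeffaceable $\delta$-functor, the degree-zero action \eqref{eqn:H0-action} (which comes from the Galois canonical map, $\can^{-1}(1\ot h)=\Ss(h_{(1)})\ot h_{(2)}$) extends \emph{uniquely} to a morphism of $\delta$-functors; so it suffices to check that the candidate formula commutes with the connecting homomorphisms of long exact sequences, which is the element-wise computation with \eqref{eqn:hb} occupying most of the proof. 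Your stage one (checking \eqref{eqn:Hq-action} is a cocycle-to-cocycle, coboundary-to-coboundary action) is sound but does not by itself tie the formula to the action for which \c{S}tefan proved \eqref{eqn:invariance}; without the uniqueness argument, you would only have \emph{some} $H$-action on $\H^q(B,M)$, and the final isomorphism would not be known to intertwine it. To repair your proposal you would need either to reproduce \c{S}tefan's spectral-sequence (or $\delta$-functor) argument, or to replace stage two by the paper's uniqueness-of-extension mechanism.
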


\pf
Following \cite{St} we work, equivalently, with a right action.
%
%The left version can be achieved from the right action corresponding to $H^{\cop}$. That is, if $M$ is a right module over $H^{\cop}$ (antipode $\Ss^{\cop}=\Ss^{-1}$), then it becomes a left module over $H$ using the antipode $\Ss$:
%\[
%h\cdot m=m\cdot \Ss(h)=\Ss^{\cop}(\Ss(h)_{(2)})m\Ss(h)_{(1)}=\Ss^{-1}(\Ss(h_{(1)}))m\Ss(h_{(2)})=h_{(1)}m\Ss(h_{(2)}).
%\]
We update our equations and need to show that there is a right $H$-action $\la$ on $\H^q(B,M)$ so that
\begin{itemize}
	\item For $q=0$, this is
	\begin{align}\tag{\ref{eqn:H0-action}'}\label{eqn:H0-action-right}
   m\la h=\Ss(h_{(1)})\cdot m \cdot h_{(2)}, \quad m\in \H^0(B,M), h\in H;
	\end{align}
	\item For $q>0$, this is
	\begin{align}\tag{\ref{eqn:Hq-action}'}\label{eqn:Hq-action-right}
	(f\la h)({\bf x})=\Ss(h_{(1)})\cdot f(h_{(2)}\cdot {\bf x})\cdot h_{(3)}, \quad f\in \Z^q(B,M), {\bf x}\in B^{\ot\,q}.
	\end{align}
Observe that the left action $h\cdot b$ can also be interpreted as $b\cdot \Ss^{-1}(h)$.
\end{itemize}

First, recall that $A$ is a right $H$-Galois object with coaction $A\to A\ot H$ given by
$b\# h\mapsto b\# h_{(1)}\ot h_{(2)}$, with  $A^{\co H}\simeq B$ by \cite[Theorem 2.2.7]{Sch}.

We let $\can\colon A\ot_B A\to A\ot H$ be the canonical isomorphism; in particular,
\[
\sum \ell_i(h)\ot r_i(h)\coloneqq \can^{-1}(1\ot h)=\Ss(h_{(1)})\ot h_{(2)}.
\]

For $M\in A\bimod$, we write $\cdot$ both for the left and right action.
By \cite{St}, the $H$-action $\la$ on $H^0(B,M)=M^{B}$ is given by
$m\la h=\sum \ell_i(h)\cdot m\cdot  r_i(h)$, which in this case translates to \eqref{eqn:H0-action-right}:
\[
m\la h=\Ss(h_{(1)})\cdot m\cdot h_{(2)}.
\]
For left actions, this is \eqref{eqn:H0-action}.

Assume this action \eqref{eqn:H0-action-right} has been extended to $\H^q(B,M)$ (here we admit $q=0$).

Next paragraph is almost {\it verbatim} from the proof of \cite[Proposition 2.4]{St}. We refer the reader to loc.cit.~and references therein for fuller details.

If $R\colon A\bimod\to B\bimod$ is the functor given by restriction of scalars, then $\H^{\bullet}(A,-)\circ R$ is a cohomological and coeffaceable $\delta$-functor. Hence the morphisms $\rho^h_0\colon \H(A,-)\circ R\to \H(A,-)\circ R$, $h\in H$, given by $\rho^h_0(M)(m)=m\la h$ can be extended univocally to morphisms of $\delta$-functors $\rho^h_{\bullet}\colon \H^{\bullet}(A,-)\circ R\to \H^{\bullet}(A,-)\circ R$.

We will describe this action explicitly in our context. For the content of this proof, we  shall write, for shortness,
%$H^\bullet(A,-)\circ R(M)=H^\bullet(B,M)$, $M\in\A\bimod$
$H^\bullet(A,-)\circ R(M)=H^\bullet(M)$, $M\in\A\bimod$. We denote by $d^q_{M}\colon \Hom_\k(B^{\ot q},M)\to \Hom_\k(B^{\ot q+1},M)$ the differential.

Let us fix a short exact sequence
\[
0\to  M'\stackrel{\iota}{\longrightarrow} M \stackrel{\pi}{\longrightarrow} M''\to 0
\]
in $A\bimod$; we write $M\ni m\twoheadrightarrow\bar{m}\in M''$ for the projection.

First, the fact that $\rho^h_{\bullet}$ is a morphism of $\delta$-functors establishes that
\[
\rho^h_{q+1}(\delta(\H^q(M'')))=\delta(\rho^h_{q}(\H^q(M''))).
\]
In other words this is the recipe to extend the action, as it gives
\[
\delta([f])\la h=\delta([f\la h]), \qquad f\colon B^{\ot q}\to M''\in\Z^q(M'').
\]
We shall write, wlog, $\delta(f)\colon B^{\ot q}\to M'$ for an element in $\Z^{q+1}(M')$ such that $[\delta(f)]=\delta([f])$.
Now, the morphism $\delta\colon \H^q(M'')\to \H^{q+1}(M')$ is the usual connecting homomorphism for the long exact sequence in cohomology:
\[
\dots \to  \H^q(M) \stackrel{\H^q(\pi)}{\longrightarrow} \H^q(M'')\stackrel{\delta}{\longrightarrow}\H^{q+1}(M') \stackrel{\H^{q+1}(\iota)}{\longrightarrow} \H^{q+1}(M) \to \dots
\]
Explicitly, for $f\in\Z^q(M'')=\ker d^q_{M''}$, let $\tilde{f}\colon B^{\ot q}\to M$ be such that $\pi(\tilde{f})=f$, in the sense that $f(b_1,\dots,b_q)(\pi(m))=\tilde{f}(b_1,\dots,b_q)(m)$. Now we consider $d^q_M({\tilde{f}})\in \Z^{q+1}(M'')$ and see that $\H^{q+1}(\pi)([d^q_M({\tilde{f}})])=d^q_{M''}(\H^{q}(\pi)(\tilde{f}))=d^q_{M''}(f)=0$.
That is $[d^q_M({\tilde{f}})]\in\ker \H^{q+1}(\pi)=\img  \H^{q+1}(\iota)$ and we can consider $[d^q_M({\tilde{f}})]$ as an element $\delta([f])\in \H^{q+1}(M')$. By abuse of notation we shall write $\delta(f)=df$.

In our setting, if ${\bf b}=(b_1,\dots,b_{q+1})$
\begin{align*}
(\delta(f)\la h)({\bf b})=d(f \la h)({\bf b})
\end{align*}
If $q=0$, $m\in M, b\in B$ , then $dm(b)=b\cdot m-m\cdot b$.
Also, recall the commutations \eqref{eqn:hb}. The connection reads as
\begin{align*}
(\delta(m)\la h)(b)&=d(m \la h)(b)=(d(\Ss(h_{(1)})\cdot m \cdot h_{(2)})(b))\\
&=b\cdot \Ss(h_{(1)})\cdot m\cdot h_{(2)} -\Ss(h_{(1)})\cdot m \cdot h_{(2)}\cdot b\\
&=(b\Ss(h_{(1)}))\cdot m\cdot h_{(2)} -\Ss(h_{(1)})\cdot m \cdot (h_{(2)}b)\\
&=\Ss(h_{(1)})(h_{(2)}\cdot b)\cdot m \cdot h_{(3)}-\Ss(h_{(1)})\cdot m \cdot (h_{(2)}\cdot b)h_{(3)}\\
&=\Ss(h_{(1)})\cdot (h_{(2)}\cdot b\cdot m - m\cdot h_{(2)}\cdot b)\cdot h_{(3)}\\
&=\Ss(h_{(1)})\cdot dm(h_{(2)}\cdot b)\cdot h_{(3)},
\end{align*}
so $dm\la h$ is as in \eqref{eqn:H0-action-right}.

Now, we turn to $q>0$. If ${\bf b}=b_1\ot \cdots \ot b_{q+1}\in B^{\ot\,q+1}$, then recall that  we write ${\bf b}=b_1\ot {\bf b}^2={\bf b}^q\ot b_{q+1}$ and ${\bf b}_{i\,i+1}=b_1\ot \cdots \ot b_ib_{i+1}\ot \cdots\ot b_{q+1}\in B^{\ot\,q}$, $i\in\I_q$.

Hence, we obtain, with commutations \eqref{eqn:hb} as above:
\begin{align*}
d(f \la h)&({\bf b})=b_1\cdot (f \la h)({\bf b}^2)-(f \la h)({\bf b}_{12})+(f \la h)({\bf b}_{23})\\
&\qquad +\cdots +(-1)^{n-1}(f \la h)({\bf b}_{q-1q})+(-1)^n(f \la h)({\bf b}^q)\cdot b_{q+1}\\
&=b_1\cdot \Ss(h_{(1)})\cdot f(h_{(2)}\cdot {\bf b}^2) \cdot h_{(3)}-\Ss(h_{(1)})\cdot f(h_{(2)}\cdot {\bf b}_{12})h_{(3)}\\
&\qquad +\cdots +(-1)^n\Ss(h_{(1)})\cdot f (h_{(2)}\cdot {\bf b}^q)\cdot h_{(3)}\cdot b_{q+1}\\
&=\Ss(h_{(1)} ) \cdot h_{(2)}\cdot b_1 \cdot f(h_{(3)}\cdot {\bf b}^2) \cdot h_{(3)}-\Ss(h_{(1)})\cdot f(h_{(2)}\cdot {\bf b}_{12})h_{(3)}\\
&\qquad +\cdots +(-1)^n\Ss(h_{(1)})\cdot f (h_{(2)}\cdot {\bf b}^q)\cdot h_{(3)}\cdot b_{q+1}\cdot h_{(4)}\\
&=\Ss(h_{(1)} ) \cdot (h_{(2)}\cdot {\bf b})_1 \cdot f((h_{(2)}\cdot {\bf b})^2) \cdot h_{(3)}-\Ss(h_{(1)})\cdot f((h_{(2)}\cdot {\bf b})_{12})h_{(3)}\\
&\qquad +\cdots +(-1)^n\Ss(h_{(1)})\cdot f ((h_{(2)}\cdot {\bf b})^q)\cdot (h_{(2)}\cdot {\bf b})_{q+1}\cdot h_{(3)}\\
&=\Ss(h_{(1)})df(h_{(2)}\cdot {\bf b})h_{(3)},
\end{align*}
which shows \eqref{eqn:Hq-action-right}.

By  uniqueness of the extension; this shows that  \eqref{eqn:Hq-action-right} is indeed the action announced on \cite[Proposition 2.4]{St}; which implies \eqref{eqn:Hq-action} for the case of  left actions.

The identity \eqref{eqn:invariance} is \cite[(3.6.1) in p.229]{St}.
\epf

\subsection{Invariant Hochschild 2-cocycles with coefficients in $\k$}

In this setting, the action $\ra$ in \eqref{eqn:Hq-action} from Theorem \ref{thm:stefan} becomes
\[
h\ra \eta(x,y)= \eta(\Ss(h_{(2)})\cdot x,\Ss(h_{(1)})\cdot y).
\]
Since the antipode $\Ss$ is bijective, a cocycle $\eta\in \Z^2(B)$ is $H$-invariant when
\begin{align}\label{eqn:eta-invariant}
\eta(h_{(1)}\cdot a,h_{(2)}\cdot b)=\eps(h)\eta(a,b), \ a,b\in B.
\end{align}

Similarly, if $\eta\in Z^2(B)^H$, then $\eta$ extends to $\eta\in \Z^2(B\# H)$ via the formula
\begin{align}\label{eqn:hoch-extension}
\eta (a\#h,y\#k)=\eta(a,h\cdot b)\eps(k), \ a,b\in B, h,k\in H.
\end{align}

Conversely, if $\eta\in Z^2(B\# H)$ then the restriction
\[
\eta_{|B}\coloneqq \eta_{|B\otimes B}\colon B\otimes B\to \k
\]
defines a cocycle $\eta_{|B}\in Z^2(B)$; this is not necessarily $H$-invariant. If $\int\in H$ is an integral with $\eps(\int)=1$, then we can project $\eta\in Z^2(B)$ onto $\eta^{\int}\in Z^2(B)^H$ by
\[	
\eta^{\int}(a,b)=\eta(\smallint{}_{(1)}\cdot a,\smallint{}_{(2)}\cdot b), \qquad a,b\in B.
\]
%
%\begin{remark}
%We collect, for completeness, some relations between $\Z^2(B)$, $\Z^2(B)^H$ and $\Z^2(B\#H)$, in the spirit of \eqref{eqn:eta-extension}.
%\begin{itemize}[leftmargin=*]
%\item If $\eta\in \Z^2(B)$ and $\int$ is an integral for $H$, with $\eps(\int)=1$, then
%\[	
%\eta^{\int}(a,b)=\eta(\smallint{}_{(1)}\cdot a,\smallint{}_{(2)}\cdot b), \qquad a,b\in B
%\]
%defines a cocycle in $\Z^2(B)^H$.
%\item
%\end{itemize}
%\end{remark}
%Hence we have a map $\varphi \colon Z^2(B)\to Z^2(B\# H)$ given by
%\[
%\varphi(\eta)(ah,bk)=\eta(\int_1\cdot a,h\int_2\cdot b)\eps(k).
%\]
%If $\eta\in Z^2(B)^H$, then this is $\varphi(\eta)(ah,bk)=\eta(a,h\cdot b)\eps(k)$; let us set $\varphi'=\varphi_{|Z^2(B)^H}$.
%\begin{remark}
%	Recall that $\int_1\ot \, h\int_2=\Ss(h)\int_1\ot \int_2$.
%\end{remark}

\section{Hopfschild cohomology}\label{sec:hopfschild}

Let $(A,m,\Delta)$ be a Hopf algebra and let $\eta\in \Z^2(A,\k)$ be a  Hochschild 2-cocycle with coefficients in $\k$.
Under certain conditions, it is possible to consider the exponential
\[
e^\eta\coloneqq \sum_{k\geq 0} \frac{1}{k!}\eta^{\ast k}\colon A\ot A\to \k,
\]
which becomes a convolution invertible map with inverse $e^{-\eta}$. For instance, this holds when $\eta_{|A\ot A_0+A_0\ot A}=0$.

We show that this map is always well-defined in our setting in \S \ref{sec:expo-and-inv}.

Moreover, there are occasions in which this exponential map is a Hopf 2-cocycle. 
A sufficient condition is given by the $\ast$-commutations in $\Hom(A^{\ot 3},\k)$:
\begin{align}
\label{eqn:conm1}
[\eta(\id\ot m), \eps\ot \eta]_{\ast}&=0,\\
\label{eqn:conm2} [\eta(m\ot \id), \eta\ot \eps]_{\ast}&=0.
\end{align}
However, we observe the following:
\begin{remark}
Conditions  \eqref{eqn:conm1} and \eqref{eqn:conm2} are not necessary, see Lemma \ref{lem:converse} in \S \ref{sec:a2}.
\end{remark}
On the other hand, under some hypothesis, these equations are equivalent to each other, see \cite{GaM}. We investigate some of these instances in \S \ref{sec:when}.

The sufficiency of \eqref{eqn:conm1} and \eqref{eqn:conm2} is clear: for $\sigma=e^\eta$, \eqref{eqn:hopfcocycle-cordfree} becomes
\begin{align*}
(\sigma\ot\eps)\ast \sigma(m\ot\id)&=e^{\eta\ot\eps}\ast e^{\eta(m\ot\id)}\stackrel{\eqref{eqn:conm2}}{=}e^{\eta\ot\eps+\eta(m\ot\id)}
\stackrel{\eqref{hoch}}{=} e^{\eps\ot\eta+\eta(\id\ot\, m)}\\
&\stackrel{\eqref{eqn:conm1}}{=}
e^{\eps\ot\eta}\ast e^{\eta(\id\ot\, m)}=(\eps\ot\sigma)\ast \sigma(\id\ot\, m).
\end{align*}
This is the content of \cite[Lemma 4.1]{GrM}.

Notice that, in terms of elements, \eqref{eqn:conm1} and \eqref{eqn:conm2} are, for $x,y,z\in A$:
\begin{align}\label{eqn:conds-in-elements}
\begin{split}
\eta(x,y_{(1)}z_{(1)})\eta(y_{(2)},z_{(2)})&=\eta(y_{(1)},z_{(1)})\eta(x,y_{(2)}z_{(2)}), \\ \eta(x_{(1)}y_{(1)},z)\eta(x_{(2)},y_{(2)})&=\eta(x_{(1)},y_{(1)})\eta(x_{(2)}y_{(2)},z).
\end{split}
\end{align}

Based on our examples, we observe that Hopf cocycle deformations are generically determined by cocycles which do not arise as exponentials. Hence we introduce the following concept, to name and study this phenomenon.
\begin{definition}
We say that a Hopf 2-cocycle $\sigma\in Z^2(A)$ is pure if it is not cohomologous to an exponential $e^\eta$ of a Hochschild 2-cocycle $\eta\in \Z^2(A,\k)$.
\end{definition}

We shall speak of pure deformations and exponential deformations, as expected.

\subsection{Exponential and invariance}\label{sec:expo-and-inv}

Let $H,B\in\ydh$, $A=B\# H$ be as in \S \ref{sec:setting}.

We start with some simple though meaningful observations in Lemma \ref{lem:eta11}.
Let $\eta\in \Z^2(B,\k)^H$ be an $H$-invariant Hochschild 2-cocycle; and set $\eta\in \Z^2(A,\k)$ the extension as in \eqref{eqn:hoch-extension}. Notice that
\[
\eta(x,h)=\eta(x,1)\eps(h)=0, \qquad \eta(h,x)=\eta(1,h\cdot x)=0,
\]
so $\eta_{|A^+\ot A_0+A_0\ot A^+}=0$. Hence if $\eta_{|A_0\ot A_0}=0$, then the exponential
$\sigma\coloneqq e^\eta$ is well-defined.
Moreover, as in our case $\eta(h,k)=\eps(hk)\eta(1,1)$, $h,k\in H=A_0$, it is enough to require
\begin{align}\label{eqn:eta11}
\eta(1,1)=0.
\end{align}

Next lemma shows that we can always assume \eqref{eqn:eta11}, without loss of generality.

\begin{lemma}\label{lem:eta11}
Let $\eta\in \Z^2(B,\k)^H$ and set $
\eta'=\eta - \eta(1,1)\eps$. Then
\begin{enumerate}[label=(\alph*)]
	\item $\eta'\in \Z^2(B,\k)$ and $[\eta]=[\eta']$ in $\H^2(B,\k)$.
	\item  $\eta'(1,1)=0$ and $\eta'=\eta$ in $B^+\ot B+B\ot B^+$.
	\item $e^{\eta'}\colon B\ot B\to \k$ is a normalized convolution invertible linear map and
\[
e^{\eta}=e^{\eta(1,1)}e^{\eta'}.
\]
In particular, 	$e^\eta\colon B\ot B\to \k$ is a convolution invertible linear map.
\item $e^\eta$ is a Hopf 2-cocycle if and only if $e^{\eta'}$ is a (normalized) Hopf 2-cocycle.
\end{enumerate}
\end{lemma}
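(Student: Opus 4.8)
The lemma bundles four assertions about the modified cocycle $\eta' = \eta - \eta(1,1)\eps$. My plan is to dispatch them in order, since each feeds the next, and to lean on Lemma \ref{lem:hoch} for the Hochschild-theoretic bookkeeping. For part (a), I would first check that $\eta(1,1)\eps$ is itself a Hochschild $2$-cocycle: the constant functional $\eps\otimes\eps$, scaled, is annihilated by the differential because the coboundary formula \eqref{hoch} is tautological when evaluated on $\eps$-factors, so $\eta'\in\Z^2(B,\k)$ as a difference of cocycles. To see $[\eta]=[\eta']$ in $\H^2(B,\k)$, I would exhibit $\eta(1,1)\eps$ as a coboundary: taking $f=\eta(1,1)\eps$ in the coboundary description from Lemma \ref{lem:hoch}, one checks $-f(ab)=-\eta(1,1)\eps(a)\eps(b)$ vanishes on $B^+\otimes B^+$, which matches the requirement that a coboundary be determined by $\eta(1,1)=f(1)$ together with vanishing on $B^+\otimes B^+$. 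Hence $\eta(1,1)\eps\in\B^2(B,\k)$ and the classes agree.

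\textbf{The normalization and factorization steps.} Part (b) is immediate: $\eta'(1,1)=\eta(1,1)-\eta(1,1)\eps(1)\eps(1)=0$, and on $B^+\otimes B+B\otimes B^+$ the correction term $\eta(1,1)\eps(a)\eps(b)$ vanishes because $\eps$ kills $B^+$, so $\eta'=\eta$ there. Part (c) is where the exponential enters. Since $\eta'(1,1)=0$ and $\eta'$ vanishes on $A^+\otimes A_0 + A_0\otimes A^+$ by the discussion preceding the lemma, the exponential $e^{\eta'}$ is well-defined and normalized ($e^{\eta'}(1,1)=1$, and $e^{\eta'}(x,1)=\eps(x)$). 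For the factorization $e^\eta=e^{\eta(1,1)}e^{\eta'}$, I would write $\eta=\eta'+\eta(1,1)\eps$ and observe that $\eta(1,1)\eps$ is $\ast$-central: convolving with $\eps\otimes\eps$ (up to the scalar) commutes with everything because $\eps$ is the convolution unit's character in each slot. Concretely, $\eta(1,1)\eps$ $\ast$-commutes with $\eta'$, so the exponential of the sum factors as the product of exponentials, and $e^{\eta(1,1)\eps}=e^{\eta(1,1)}\eps$ collapses to the scalar $e^{\eta(1,1)}$ times the unit. Convolution invertibility of $e^\eta$ then follows since it is a nonzero scalar multiple of the invertible $e^{\eta'}$.

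\textbf{The equivalence of Hopf-cocycle conditions.} Part (d) is the payoff and, I expect, the only step requiring genuine care. The claim is that $e^\eta$ satisfies the Hopf cocycle condition \eqref{eqn:hopfcocycle-cordfree} if and only if $e^{\eta'}$ does. The clean way is to use part (c): $e^\eta=c\,e^{\eta'}$ with $c=e^{\eta(1,1)}$ a nonzero scalar. I would then invoke the normalization equivalence \eqref{eqn:normalized}, under which a cocycle and its nonzero-scalar multiple are interchangeable as Hopf cocycles; rescaling by $c^{-1}$ sends $e^\eta$ to $e^{\eta'}$ and preserves condition \eqref{eqn:hopfcocycle-cordfree}, because that condition is homogeneous of the same degree on both sides (two $\sigma$-factors appear on each side). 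The main obstacle to watch is verifying that the homogeneity really is balanced: each side of \eqref{eqn:hopfcocycle-cordfree} is a $\ast$-product of exactly two copies of $\sigma$, so scaling $\sigma\mapsto c\,\sigma$ multiplies both sides by $c^2$ and the equation is unaffected. This is precisely why a nonzero scalar is harmless, and it is the crux that makes (d) follow formally from (c) rather than requiring a fresh computation with the exponential series.
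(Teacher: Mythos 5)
Your proof is correct and follows essentially the same route as the paper: you identify $\theta=\eta(1,1)\eps$ as a Hochschild coboundary for (a)--(b), use that $\theta$ is a scalar multiple of the convolution unit $\eps\ot\eps$ (hence $\ast$-commutes with $\eta'$) to obtain the factorization $e^{\eta}=e^{\eta(1,1)}e^{\eta'}$ in (c), and reduce (d) to the scalar-rescaling normalization \eqref{eqn:normalized}, exactly as the paper does. The only differences are expository: you spell out the coboundary verification via Lemma \ref{lem:hoch} and the degree-two homogeneity of \eqref{eqn:hopfcocycle-cordfree} under $\sigma\mapsto c\,\sigma$, steps the paper leaves implicit.
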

\pf
Let $\theta=\eta(1,1)\eps$, then $\theta\in \B^2(B,\k)$ and  $(a)$ holds. Also, $e^\theta=e^{\eta(1,1)}\eps$. $(b)$ is clear and it implies that $e^{\eta'}\colon B\ot B\to \k$ is well-defined normalized convolution invertible linear map. As $\eta'\ast\theta=\theta\ast\eta'$, then $e^{\theta}\ast e^{\eta'}=e^{\eta}$, which proves $(c)$. $(d)$ is straightforward, cf.~\eqref{eqn:normalized}.
\epf

\begin{remark}\label{rem:minimo}
	If $\eta(1,1)=0$, then
\[
e^{\eta}(x,y)=\sum\limits_{k=0}^{m_{x,y}}\dfrac{1}{k!}\eta^{\ast k}(x,y), \qquad m_{x,y}=\min\{\deg(x),\deg(y)\}, \ x,y\in B.
\]
In particular,
\begin{align}\label{eqn:ealaeta=eta}
e^\eta(x,x_i)=\eta(x,x_i), \qquad x\in B, \ i\in\I_\theta.
\end{align}
\end{remark}

In our setting, we require that $\sigma=e^\eta$ satisfies \eqref{eqn:sigma-intro}.
Next proposition shows that this is to require that $\eta\in \Z^2(B,\k)^H$.

\begin{proposition}\label{pro:invariance}
Let $\eta\in \Z^2(B,\k)$ be a Hochschild 2-cocycle. Then $e^\eta$ is $H$-linear if and only if $\eta\in \Z^2(B,\k)^H$.
\end{proposition}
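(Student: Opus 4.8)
The plan is to establish the equivalence by expanding what it means for $e^\eta$ to be $H$-linear in the sense of \eqref{eqn:sigma-intro}, namely that $e^\eta(h_{(1)}\cdot a, h_{(2)}\cdot b) = \eps(h)e^\eta(a,b)$ for all $a,b\in B$ and $h\in H$, and comparing it directly with the defining condition \eqref{eqn:eta-invariant} for $H$-invariance of $\eta$. By Lemma \ref{lem:eta11} we may assume without loss of generality that $\eta(1,1)=0$, so that $e^\eta$ is genuinely well-defined as a finite sum on each pair of homogeneous elements (Remark \ref{rem:minimo}).

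The direction ($\Leftarrow$) should be the easier one. First I would observe that the convolution product $\ast$ on $\Hom_\k(B\ot B,\k)$ is compatible with the diagonal $H$-action, in the sense that if $\eta$ is $H$-invariant then so is each power $\eta^{\ast k}$: this reduces to checking that the $H$-action commutes with the comultiplication of $B$ in $\ydh$, which is exactly the statement that $\Delta_B$ is a morphism in $\ydh$. Concretely, $\eta^{\ast k}(h_{(1)}\cdot a, h_{(2)}\cdot b)$ unfolds using coassociativity and the fact that $h$ acts diagonally through $\Delta^{(k)}$; invariance of each factor $\eta$ then pulls out a single $\eps(h)$. Summing over $k$ yields $H$-invariance of $e^\eta$, which is precisely $H$-linearity in the required sense.

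For the direction ($\Rightarrow$), I would argue that $H$-linearity of $e^\eta$ forces $H$-invariance of $\eta$ by extracting the lowest-degree component. Restricting $e^\eta$ to $V\ot V$ and using \eqref{eqn:ealaeta=eta} (or more directly that on degree-one elements the higher powers $\eta^{\ast k}$, $k\geq 2$, vanish), we get $e^\eta(x,y)=\eta(x,y)$ for $x,y\in V$; since the $H$-action preserves the grading and acts on $V$ by \eqref{eqn:realization}, the $H$-linearity of $e^\eta$ on $V\ot V$ gives \eqref{eqn:eta-invariant} for $a,b\in V$. To promote this to all of $B$, I would induct on the total degree, using that $B$ is generated by $V$ and that at each stage the relation $e^\eta(h_{(1)}\cdot a, h_{(2)}\cdot b)=\eps(h)e^\eta(a,b)$, together with the recursive structure of the higher convolution powers $\eta^{\ast k}$ expressed through $\underline\Delta$, isolates the top term $\eta(a,b)$ once the lower-order terms are known to be invariant by inductive hypothesis.

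The main obstacle I anticipate is bookkeeping in the ($\Rightarrow$) direction: disentangling the invariance of $\eta$ on high-degree elements from the tangle of lower powers $\eta^{\ast k}$ inside $e^\eta$. The cleanest route is probably to avoid degree induction on $e^\eta$ altogether and instead work power-by-power. Since the diagonal $H$-action preserves the filtration by the number of convolution factors (each $\eta^{\ast k}$ lands, under the action, in a combination of $\eta^{\ast j}$ with the same $j=k$ because the action is through the coproduct), one can show that $e^\eta$ is $H$-invariant \emph{iff} each $\eta^{\ast k}$ is, and then that invariance of $\eta^{\ast 1}=\eta$ is both necessary and sufficient for all of them. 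This turns the problem into the purely formal statement that the diagonal $H$-action is an algebra action on the convolution algebra, which is where Theorem \ref{thm:stefan} and the Yetter-Drinfeld compatibility do the real work.
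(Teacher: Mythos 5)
Your direction $(\Leftarrow)$ is fine and is exactly what the paper dismisses as ``clear''. The gap is in your preferred route for $(\Rightarrow)$. The parenthetical claim that ``each $\eta^{\ast k}$ lands, under the action, in a combination of $\eta^{\ast j}$ with the same $j=k$'' is not true as stated: what the module-algebra structure gives is $\eta^{\ast k}(h_{(1)}\cdot a,h_{(2)}\cdot b)=\prod_{i}\eta\bigl(h_{(i)}\cdot(a\ot b)_{(i)}\bigr)$, where $(a\ot b)_{(1)}\ot\cdots\ot(a\ot b)_{(k)}$ is the iterated comultiplication of $B\ot B$ as a coalgebra in $\ydh$ (the one used in Lemma \ref{lem:cuentacociclo}; note that with the naive unbraided convolution even this fails, since the Sweedler legs of $h$ then pair non-adjacently). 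This expression is a product of $k$ \emph{translates} of $\eta$, and it collapses to a multiple of $\eta^{\ast k}$ only when $\eta$ is already invariant --- which is what you are trying to prove. Consequently the pivotal equivalence ``$e^\eta$ is $H$-invariant iff each $\eta^{\ast k}$ is'' has an unproved forward direction, and that direction carries the entire content of the proposition: invariance of the \emph{sum} $\sum_k\frac{1}{k!}\eta^{\ast k}$ does not let you separate the individual powers, so the reduction is circular. An algebra action fixing $e^\eta$ does not formally fix $\eta$ unless you can invert the exponential. (This is also how your ``purely formal'' idea can be repaired: set $\nu=e^\eta-\eps\ot\eps$; then $\nu$ is invariant, it is locally convolution-nilpotent by \eqref{cond1}, each $\nu^{\ast k}$ is invariant because invariants form a subalgebra of the braided convolution algebra, and hence $\eta=\sum_{k\geq 1}\tfrac{(-1)^{k+1}}{k}\nu^{\ast k}$ is invariant.)

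Your Route 1 (induction on total degree) can also be completed: for $k\geq 2$ every component $(a\ot b)_{(i)}$ surviving in $\eta^{\ast k}(h_{(1)}\cdot a,h_{(2)}\cdot b)$ has both entries in $B^+$, hence total degree at most $\deg a+\deg b-2$, so the inductive hypothesis applies factor by factor and isolates the $k=1$ term. But note that the paper does something shorter and genuinely different, which your plan never uses: the Hochschild identity \eqref{cond2a} of Lemma \ref{lem:hoch}, $\eta(a,bc)=\eta(ab,c)$ for $a,c\in B^+$, which is special to trivial coefficients. Writing $y=y'x_i$ (here one uses that $B$ is generated in degree one), one gets $\eta(h_{(1)}\cdot x,h_{(2)}\cdot y)=\eta(h_{(1)}\cdot(xy'),h_{(2)}\cdot x_i)$, and on pairs whose second entry lies in $V$ one has $e^\eta=\eta$ by \eqref{eqn:ealaeta=eta}, so $H$-linearity of $e^\eta$ transfers to $\eta$ directly --- no induction, no analysis of convolution powers at all. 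That identity is precisely the tool that makes the paper's proof three lines long.
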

	\pf
We show that $e^\eta\in Z^2(B)^H\Rightarrow\eta\in \Z^2(B,\k)^H$, the other implication is clear.
We need to show that $\eta(h_{(1)}\cdot x,h_{(2)}\cdot y)=\eps(h)\eta(x,y)$. If $\deg y=0$, then this is automatic.
If $\deg y=1$, say $y=x_i$, then by \eqref{eqn:ealaeta=eta} we have that
\begin{align*}
\eta(h_{(1)}\cdot x,h_{(2)}\cdot x_i)=e^{\eta}(h_{(1)}\cdot x,h_{(2)}\cdot x_i)=\eps(h)e^\eta(x,x_i)=\eps(h)\eta(x,x_i).
\end{align*}
Now, for the general case, assume $\deg y=n>1$ and let us write $y=y'x_i$, $\deg y'=n-1$. Then
\begin{align*}
\eta(h_{(1)}\cdot x, h_{(2)}\cdot y)&=\eta(h_{(1)}\cdot x,h_{(2)}\cdot y' h_{(3)}\cdot x_i)=\eta(h_{(1)}\cdot x h_{(2)}\cdot y', h_{(3)}\cdot x_i)\\
&=
\eta(h_{(1)}\cdot (x y'), h_{(2)}\cdot x_i)=\eps(h)\eta(x y', x_i)=\eps(h)\eta(x, y).
\end{align*}
Hence the result follows.
\epf

\begin{remark}\label{rem:alpha-grado1}
	Set $\tau=e^{\eta}$ in Lemma \ref{lem:cuentaalpha} so $\tau(x,y)=e^{\eta}(x,y)=\eta(x,y)$.
		In particular, \eqref{eqn:alpha-grado1} becomes
\begin{align}\label{eqn:alpha-grado1-eta}
\alpha(x)\alpha(y)-\alpha(xy)=\sigma(x,y)-\eta(x,y).
\end{align}	
	If $x=y$, then this is
	\[\alpha(x)^2-\alpha(x^2)=\sigma(x,x)-\eta(x,x).\]
	Assume that $c(x\ot x)=-x\ot x$, thus $x^2$ is primitive and $\alpha^{-1}(x^2)=-\alpha(x^2)$.
	
	If, moreover, $\alpha$ is $H$-linear, then
	\begin{align*}
	-\alpha(x^2)&=\alpha^{-1}(x^2)\stackrel{\eqref{eqn:alpha-1xy}}{=}\alpha(x)^2-\alpha(x_{(-1)}\cdot x)\alpha^{-1}(x_{(0)})-\alpha(x^2)\\
	&=\alpha(x)^2-\eps(x_{(-1)})\alpha( x)\alpha^{-1}(x_{(0)})-\alpha(x^2)\\
	&=\alpha(x)^2-\alpha( x)\alpha^{-1}(x)-\alpha(x^2)=2\alpha(x)^2-\alpha(x^2).
	\end{align*}
	That is, $\alpha(x)=0$ and 	\eqref{eqn:alpha-grado1-eta} becomes
	\begin{align}\label{eq:condicion-alpha-eta-sigma}
	\alpha(xy)=\eta(x,y)-\sigma(x,y),\qquad x,y\in V.
	\end{align}

	In particular, if $x^2=0$ (v.g.~when $B$ is a Nichols algebra) we obtain
	\begin{align}\label{eq:sigmaxx}
	\sigma(x,x)=\eta(x,x).
	\end{align}
	In other words, $\eta(x,x)=\sigma(x,x)$ is a necessary condition to $e^{\eta}\sim\sigma$.
\end{remark}

\subsection{Equivalence of the commutation conditions}\label{sec:when}
In \cite{GaM}, for a Hopf algebra $A=\Bq(V)\# H$, the authors work with cocycles $\eta$ concentrated in degree $(1,1)$, that is $\eta(A^n,A^m)=0$ if $(n,m)\neq (1,1)$ and show in \cite[Lemma 2.3]{GaM} that \eqref{eqn:conm1} and \eqref{eqn:conm2} are equivalent to each other and in turn to two other identities in $V^{\ot 4}$:
\begin{align}\label{eqn:GaM-V4}
\begin{split}
({\eta}\ot {\eta})(\id\ot c\ot \id)&= ({\eta}\ot {\eta})(\id\ot c\ot \id)(c\ot c),\\
({\eta}\ot {\eta})(\id\ot c\ot \id)(\id\ot \id\ot c)&= ({\eta}\ot {\eta})(\id\ot c\ot \id)(c\ot \id\ot \id).
\end{split}
\end{align}

We are interested in finding more general contexts where this equivalence holds, to reduce the number of identities to check.
We obtain some answers in Lemmas \ref{lem:eta-symmetric} and \ref{lem:conmutation} below.

\begin{lemma}\label{lem:eta-symmetric}
Let $A$ be a Hopf algebra.
If $\eta(x,y)=\lambda\,\eta(y,x)$ for every $x,y\in A$ and some $\lambda\in\k$, then \eqref{eqn:conm1} and  \eqref{eqn:conm2}  are equivalent.
\end{lemma}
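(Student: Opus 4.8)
The plan is to work with the element-wise reformulation of the two conditions recorded in \eqref{eqn:conds-in-elements}, and to transform one into the other by repeatedly invoking the symmetry hypothesis $\eta(x,y)=\lambda\,\eta(y,x)$. First I would dispose of the degenerate case: if $\lambda=0$, the hypothesis forces $\eta(x,y)=0$ for all $x,y\in A$, so $\eta=0$ and both \eqref{eqn:conm1} and \eqref{eqn:conm2} hold trivially. Hence I may assume $\lambda\neq 0$ throughout the remaining argument. (As a consistency check one sees that whenever $\eta\neq 0$ the hypothesis forces $\lambda^2=1$, so in fact $\lambda=\pm 1$; but the computation below only uses $\lambda\neq 0$.)

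For the main case I would start from \eqref{eqn:conm1} in the form
\[
\eta(x,y_{(1)}z_{(1)})\,\eta(y_{(2)},z_{(2)})=\eta(y_{(1)},z_{(1)})\,\eta(x,y_{(2)}z_{(2)}),
\]
valid for all $x,y,z\in A$, and apply $\eta(a,b)=\lambda\,\eta(b,a)$ to each of the four factors. This moves the products $y_{(i)}z_{(i)}$ from the second slot into the first slot (the shape appearing in \eqref{eqn:conm2}) and contributes an overall factor $\lambda^2$ on each side, leaving $\eta(z_{(2)},y_{(2)})$ on the left and $\eta(z_{(1)},y_{(1)})$ on the right in the ``wrong'' order. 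Applying the symmetry once more to exactly these two inner factors restores the order $y,z$ and contributes one further $\lambda$ on each side. Since $\lambda\neq 0$, the common factor $\lambda^3$ cancels and I am left with
\[
\eta(y_{(1)}z_{(1)},x)\,\eta(y_{(2)},z_{(2)})=\eta(y_{(1)},z_{(1)})\,\eta(y_{(2)}z_{(2)},x).
\]
As this is an identity of functions holding for all $x,y,z$, the cyclic relabeling $x\mapsto y,\ y\mapsto z,\ z\mapsto x$ is legitimate and carries it precisely onto \eqref{eqn:conm2}.

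Finally I would note that every step is reversible: for $\lambda\neq 0$ the relation $\eta(a,b)=\lambda\,\eta(b,a)$ is an honest equivalence of factors, and the relabeling is a bijection of the free variables, so reading the same chain backwards deduces \eqref{eqn:conm1} from \eqref{eqn:conm2}. This gives the asserted equivalence. I do not anticipate any serious obstacle: the argument is essentially a bookkeeping exercise, and the only delicate points are tracking the powers of $\lambda$ correctly (a single flip per factor, plus one extra flip on the two inner factors) and verifying that the cyclic relabeling sends one displayed identity exactly onto the other, with the product landing in the first slot rather than requiring any commutativity of $A$.
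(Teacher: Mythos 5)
Your proof is correct and follows essentially the same route as the paper's: the paper's entire proof is the one-line instruction to interchange two of the variables in \eqref{eqn:conds-in-elements} and exploit the symmetry of $\eta$, which is exactly the flip-the-slots-and-relabel argument you carry out in detail. Your version is in fact more careful than the paper's terse one-liner, since you also dispose of the degenerate case $\lambda=0$ and track the powers of $\lambda$ explicitly before cancelling them.
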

\pf
Interchange $a$ and $c$ in one of the lines from \eqref{eqn:conds-in-elements} to obtain the other.
\epf

The following lemma is rather technical, see \eqref{eqn:conmutation} for a concrete and detailed application.
\begin{lemma}\label{lem:conmutation}
	Let $A$ be a Hopf algebra and fix a linear basis $\mathbb{B}$ of $A$.
	Let $\eta\in\Z^2(A,\k)$ be a Hochschild 2-cocycle. Fix $T=\k[\tau_x:x\in\mathbb{B}]$ the polynomial algebra on $\dim A$ variables given by the symbols $\tau_x\coloneqq \eta(-,x)\colon A\to \k$.
	
	In particular, for every $y,z\in A$, there are families of scalars $(u_x=u_x(y,z))_{x\in\mathbb{B}}$, $(d_x=d_x(y,z))_{x\in\mathbb{B}}\in\k$ such that the following identities hold in $T$:
	\begin{align*}
	\eta(-,y_{(1)}z_{(1)})\eta(y_{(2)},z_{(2)})&=\sum_{x\in\mathbb{B}} u_x(y,z)\tau_x, \\ \eta(y_{(1)},z_{(1)})\eta(-,y_{(2)}z_{(2)})&=\sum_{x\in\mathbb{B}} d_x(y,z)\tau_x.
	\end{align*}
	Assume that
	\begin{align}\label{eqn:ua-da}
	u_x(y,z)=d_x(y,z), \qquad \forall x,y,z\in \mathbb{B}.
	\end{align}
	Then \eqref{eqn:conm1} implies \eqref{eqn:conm2}.
	
	The converse is also true, {\it mutatis mutandis}.
\end{lemma}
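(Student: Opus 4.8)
The plan is to recognize that the hypothesis \eqref{eqn:ua-da} is really a single identity inside $A$ itself, and that \eqref{eqn:conm1} and \eqref{eqn:conm2} are the two scalar identities obtained by pairing this $A$-valued identity with $\eta$ in the first, respectively the second, tensor slot. So I would first unwind the coefficients $u_x,d_x$. Since $\tau_x=\eta(-,x)$ and, for any $w\in A$ written in the basis as $w=\sum_{x\in\mathbb{B}}\mu_x x$, one has $\eta(-,w)=\sum_{x}\mu_x\tau_x$, the scalar $u_x(y,z)$ is exactly the coordinate at $x$ of the element $y_{(1)}z_{(1)}\,\eta(y_{(2)},z_{(2)})\in A$, while $d_x(y,z)$ is the coordinate at $x$ of $y_{(2)}z_{(2)}\,\eta(y_{(1)},z_{(1)})$. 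The point of passing to the polynomial algebra $T$ is precisely that its degree-one part has the symbols $\tau_x$ as an honest basis, so that matching coefficients is meaningful even when the functionals $\tau_x$ are linearly dependent in $A^\ast$. Consequently \eqref{eqn:ua-da}, by bilinearity in $(y,z)$, is equivalent to the identity in $A$
\begin{equation*}
y_{(1)}z_{(1)}\,\eta(y_{(2)},z_{(2)})=y_{(2)}z_{(2)}\,\eta(y_{(1)},z_{(1)}),\qquad y,z\in A.\tag{$\heartsuit$}
\end{equation*}
I regard establishing this reformulation as the conceptual core of the argument.

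With $(\heartsuit)$ in hand, both commutation conditions are read off by applying a single linear functional. Applying $\eta(a,-)$ to $(\heartsuit)$ gives $\eta(a,y_{(1)}z_{(1)})\eta(y_{(2)},z_{(2)})=\eta(y_{(1)},z_{(1)})\eta(a,y_{(2)}z_{(2)})$, which is the first line of \eqref{eqn:conds-in-elements}, i.e.\ \eqref{eqn:conm1}. Relabelling $(y,z)\rightsquigarrow(x,y)$ in $(\heartsuit)$ and then applying $\eta(-,z)$ gives $\eta(x_{(1)}y_{(1)},z)\eta(x_{(2)},y_{(2)})=\eta(x_{(1)},y_{(1)})\eta(x_{(2)}y_{(2)},z)$, which is the second line of \eqref{eqn:conds-in-elements}, i.e.\ \eqref{eqn:conm2}. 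Thus under \eqref{eqn:ua-da} the identity $(\heartsuit)$ holds and \eqref{eqn:conm2} follows by a single pairing; in particular \eqref{eqn:conm1} implies \eqref{eqn:conm2}. This route uses only linearity and the Sweedler calculus, so neither the Hochschild relation \eqref{hoch} nor the normalisation \eqref{cond1} enters. For the converse I would mirror the discussion: replace $T$ by the polynomial algebra on the symbols $\eta(x,-)$, express both sides of \eqref{eqn:conm2} in that basis, observe that the resulting coefficient condition is again equivalent to $(\heartsuit)$, and pair with $\eta(a,-)$ to recover \eqref{eqn:conm1}.

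The step I expect to demand the most care is the bookkeeping that separates the formal equality of coefficients in $T$ from the weaker functional identity in $A^\ast$: because the $\tau_x$ may fail to be linearly independent, \eqref{eqn:conm1} on its own is merely the $\eta(a,-)$-shadow of $(\heartsuit)$, whereas \eqref{eqn:ua-da} encodes all of $(\heartsuit)$ and so controls the second slot as well. The remaining points are routine---tracking the Sweedler legs through the relabelling $(y,z)\rightsquigarrow(x,y)$, and checking that expanding the products $y_{(1)}z_{(1)}$ and $y_{(2)}z_{(2)}$ in the basis $\mathbb{B}$ does identify the coordinates with $u_x$ and $d_x$ as claimed.
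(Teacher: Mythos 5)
Your proof is correct and follows essentially the same route as the paper: the paper's own argument likewise reads \eqref{eqn:ua-da} as equality, coefficient by coefficient, of the two elements $x_{(1)}y_{(1)}\,\eta(x_{(2)},y_{(2)})$ and $x_{(2)}y_{(2)}\,\eta(x_{(1)},y_{(1)})$ of $A$ (your identity $(\heartsuit)$), and then deduces \eqref{eqn:conm2} by pairing with $\eta$ in the remaining slot, written there as $\sum_{z}u_z(x,y)\,\omega_z=\sum_{z}d_z(x,y)\,\omega_z$ with $\omega_z$ the functional obtained from $\eta$ by freezing the basis element $z$ in one argument. Your explicit remark that \eqref{eqn:conm1} is never actually invoked --- so that \eqref{eqn:ua-da} alone yields both commutation conditions --- is also true of the paper's proof, which lists \eqref{eqn:conm1} among the assumptions but does not use it in the displayed chain of equalities.
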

\pf
Assume that \eqref{eqn:conm1} and \eqref{eqn:ua-da} hold. Let us set $\omega_z\coloneqq \eta(-,z)\colon A\to \k$, for each $z\in\mathbb{B}$.
Now, if $x,y\in A$, then
\begin{align*}
\eta(x_{(1)}y_{(1)}, - )\eta(x_{(2)},y_{(2)})&=\sum_{z\in\mathbb{B}}u_z(x,y)\omega_z\stackrel{\eqref{eqn:ua-da}}{=}\sum_{z\in\mathbb{B}}d_z(x,y)\omega_z\\
&=\eta(x_{(1)},y_{(1)})\eta(x_{(2)}y_{(2)},-).
\end{align*}
Which shows that \eqref{eqn:conm2} also holds.
\epf

\section{An example of abelian type}\label{sec:a2}

In this section we consider the Nichols algebra $\Bq_{\bq}$ associated to a braided vector space $(V,c^{\qb})$ of Cartan type $A_2$, with parameter $q=-1$. The Dynkin diagram is
\[
\Dchaintwo{-1}{-1}{-1}
\]
and the braiding matrix $\qb=(q_{ij})$ is such that $q_{11}=q_{22}=-1$ and $q_{12}q_{21}=-1$. 

We fix a basis $\k\{x_1,x_2\}$ associated to this matrix; then the relations defining the Nichols algebra are given by:
\begin{align}\label{eqn:rels_nichols}
x_1^2&=0, & x_2^2&=0, & x_{12}^2&=0;
\end{align}
where $x_{12}\coloneqq x_1x_2 - q_{12}x_2x_1$.
A linear PBW basis for this algebra is given by
\begin{align}\label{eqn:basis}
\mathbb{B}_{\qb}=\{1,x_1,x_2,x_{12},x_{12}x_1,x_2x_1,x_2x_{12},x_2x_{12}x_1\}.
\end{align}

We assume that there is a principal YD-realization $(\chi_i,g_i)_{i\in\I_2}$ so that $\Bq_{\bq}\in\ydh$ for some Hopf algebra $H$, {\it cf.}~\eqref{eqn:realization}. We set $A=B\# H$ and fix
\begin{align}\label{eqn:Gamma}
\Gamma\coloneqq\lg g_1,g_2\rg\leq G(H).
\end{align}

\subsection{Hopf cocycles}

The (braided) cleft objects for the Nichols algebra $\Bq_{\bq}$ are given by the family of algebras $\mE(\bs\lambda)$, $\bs\lambda=(\lambda_1,\lambda_2,\lambda_{12})\in\k^3$, generated by $y_1,y_2$ and relations
\begin{align}\label{eqn:cleft}
y_1^2&=\lambda_1, & y_2^2&=\lambda_2, & y_{12}^2&=\lambda_{12};
\end{align}
again, $y_{12}\coloneqq y_1y_2 - q_{12}y_2x_1$. The scalars $\lambda_1,\lambda_2,\lambda_{12}\in\k$ are subject to:
\begin{align}\label{eqn:restictions-lambda}
\lambda_i&\neq 0 \text{ only if } \chi_i^2=\eps, i\in\I_2;  & \lambda_{12}&\neq 0 \text{ only if } (\chi_{1}\chi_{2})^2=\eps.
\end{align}
This conditions can also be written as
\begin{align*}
\lambda_jq_{ij}^2&=\lambda_j, \, i,j\in\I_2;  & \lambda_{12}q_{ij}^2&=\lambda_{12}, \ i\neq j\in\I_2.
\end{align*}
This algebra inherits a basis as in \eqref{eqn:basis}.

\begin{remark}\label{rem:q12}
Observe that unless $q_{12}=\pm1$, and thus $q_{21}=\mp 1$, then we have $\lambda_1=\lambda_2=\lambda_{12}=0$.
\end{remark}

For each  triple $\bs\lambda=(\lambda_1,\lambda_2,\lambda_{12})$, the corresponding deformation of $A=\Bq_{\bq}\# H$ is the quotient $\mathfrak{u}_{\qb}(\bs\lambda)$ of $T(V)\# H$ modulo the relations
\begin{align}\label{eqn:rels-hopf}
a_1^2=\lambda_1(1-g_1^2),  a_2^2=\lambda_2(1-g_2^2),  a_{12}^2=\lambda_{12}(1-g_1^2g_2^2)+4q_{12}\lambda_1\lambda_2g_2^2(1-g_1^2).
\end{align}
Here $V=\k\{a_1,a_2\}$ and $a_{12}\coloneqq a_1a_2 - q_{12}a_2a_1$.

\begin{remark}\label{rem:lambda=0}
Notice that when $g_i^2=0$, $i=1,2$, we can normalize $\lambda_i=0$ in the deformations \eqref{eqn:rels-hopf}, on top of the restriction \eqref{eqn:restictions-lambda}. Idem for $g_1^2g_2^2=1$ and $\lambda_{12}$. However, the cleft objects are not isomorphic, cf.~\eqref{eqn:cleft}.
\end{remark}

Henceforward, we will assume that
\begin{equation}\label{eqn:q12}
q_{12}=\pm 1,
\end{equation}
as we shall investigate Hopf 2-cocycles $\sigma_{\bs\lambda}$ associated to non-trivial deformations $\mE(\bs\lambda)$ and $\mathfrak{u}_{\qb}(\bs\lambda)$.

\begin{lemma}\label{lem:gamma}
The section $\gamma\colon\Bq_{\bq}\to\mE(\bs\lambda)$ is given by
\begin{align*}
\gamma(x_i)&=y_i,\  i\in\I_2, &  \gamma(x_{12})&=y_{12}, & \gamma(x_{2}x_1)&=y_{2}y_1,\\
\gamma(x_{12}x_1)&=y_{12}y_{1} - 2q_{21}\lambda_1 y_2, & \gamma(x_2x_{12})&=y_{2}y_{12},& \gamma(x_2x_{12}x_1)&=y_2y_{12}y_1.
\end{align*}
The convolution inverse for $\gamma$ is determined by
\begin{align*}
\gamma^{-1}(x_i)&=-y_i,\  i\in\I_2, &  \gamma^{-1}(x_{12})&=y_{12}+2q_{12}y_2y_1, \\
\gamma^{-1}(x_{2}x_1)&=q_{21}y_{12}-y_2y_1, &
\gamma^{-1}(x_{12}x_1)&=-y_{12}y_1+2q_{21}\lambda_1y_2, \\
 \gamma^{-1}(x_2x_{12})&=-y_2y_{12},& \gamma^{-1}(x_2x_{12}x_1)&=y_2y_{12}y_1-\lambda_{12}.
\end{align*}
\end{lemma}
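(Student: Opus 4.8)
The plan is to recognize $\gamma$ as the unique unit-preserving right $\Bq_{\bq}$-comodule morphism extending $x_i\mapsto y_i$, and then to obtain $\gamma^{-1}$ from a convolution recursion. Recall that $\mE(\bs\lambda)$ is a right $\Bq_{\bq}$-comodule algebra in $\ydh$ whose coaction $\rho_E\colon\mE(\bs\lambda)\to\mE(\bs\lambda)\ot\Bq_{\bq}$ is the algebra map into the braided tensor product determined by $\rho_E(y_i)=y_i\ot 1+1\ot x_i$, reflecting the primitivity of the $x_i$. A section is precisely a convolution invertible $\gamma$ with $\gamma(1)=1$ satisfying $\rho_E\gamma=(\gamma\ot\id)\Delta$; since $\rho_E$ is filtered with leading component $1\ot b$ (matching $\Delta(b)=1\ot b+\cdots$), this identity determines $\gamma$ on the basis \eqref{eqn:basis} by induction on degree. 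Thus it suffices to check that the stated formulas satisfy $\rho_E\gamma=(\gamma\ot\id)\Delta$ and that the proposed $\gamma^{-1}$ is its convolution inverse in the braided sense of \eqref{eqn:formula-cleft}.

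First I would compute the braided reduced coproducts of the PBW basis. From $x_1,x_2$ primitive and $c(x_i\ot x_j)=q_{ij}x_j\ot x_i$ one gets $\underline{\Delta}(x_{12})=(1-q_{12}q_{21})\,x_1\ot x_2=2\,x_1\ot x_2$, and then, multiplying out $\Delta(x_{12})\Delta(x_1)$, $\Delta(x_2)\Delta(x_{12})$ and the degree-$4$ product, the reduced coproducts of $x_{12}x_1$, $x_2x_{12}$ and $x_2x_{12}x_1$, where the Nichols relations \eqref{eqn:rels_nichols} annihilate several summands. One then evaluates $\rho_E$ on the ordered monomials $y_{12}y_1$, $y_2y_{12}$, $y_2y_{12}y_1$ by the same recipe, the only new feature being that the cleft relations \eqref{eqn:cleft} contribute scalars whenever the braiding collides two equal generators. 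Comparing $\rho_E(\gamma(b))$ with $(\gamma\ot\id)\Delta(b)$ pins down the corrections: concretely, the summand $2\,x_1\ot x_2$ of $\underline{\Delta}(x_{12})$ forces a collision $y_1\,(g_2\cdot y_1)=q_{21}y_1^2=q_{21}\lambda_1$, producing an extra $2q_{21}\lambda_1\,(1\ot x_2)$ in $\rho_E(y_{12}y_1)$ that is absorbed exactly by $-2q_{21}\lambda_1\,y_2$, which is why $\gamma(x_{12}x_1)=y_{12}y_1-2q_{21}\lambda_1y_2$. The corresponding computations for $x_2x_{12}$ and $x_2x_{12}x_1$ yield no surviving lower-order term (the would-be scalars either fall in the $\Bq_{\bq}$-component, where \eqref{eqn:rels_nichols} makes them vanish, or cancel), consistent with the stated formulas.

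For the inverse I would use $\gamma^{-1}(1)=1$ together with the recursion extracted from $m_E(\gamma\ot\gamma^{-1})\Delta=\eps\,1_E$, namely
\[
\gamma^{-1}(b)=-\gamma(b)-\gamma(b_{\underline{(1)}})\,\gamma^{-1}(b_{\underline{(2)}}),\qquad b\in\Bq_{\bq}^+,
\]
which involves only $\gamma$ and the reduced coproducts already computed. This is routine: for instance it gives $\gamma^{-1}(x_{12})=-y_{12}+2y_1y_2=y_{12}+2q_{12}y_2y_1$, and the scalar $-\lambda_{12}$ in $\gamma^{-1}(x_2x_{12}x_1)$ is produced by the relation $y_{12}^2=\lambda_{12}$. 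One then checks the symmetric identity $\gamma^{-1}\ast\gamma=\eps\,1_E$ to confirm that $\gamma^{-1}$ is a two-sided convolution inverse.

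The main obstacle is purely the braided bookkeeping in degrees three and four: correctly propagating the factors $q_{ij}$ through the products in $\mE(\bs\lambda)\,\underline{\ot}\,\Bq_{\bq}$, and tracking exactly where the relations \eqref{eqn:cleft} drop a degree, since this is what generates the $\bs\lambda$-dependent corrections and fixes their coefficients. I would contain the computation by evaluating $\rho_E$ on each ordered monomial once and reusing it, and by exploiting $q_{12}q_{21}=-1$ and \eqref{eqn:q12} to simplify the collision scalars at each step; the genuinely delicate point is verifying that the several contributions cancel for $x_2x_{12}$ and $x_2x_{12}x_1$, so that no lower-order term survives there.
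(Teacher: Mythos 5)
Your proposal is correct and follows essentially the same route as the paper's proof: verify the comodule identity $(\gamma\ot\id)\Delta=\rho\gamma$ by computing the reduced coproducts and coactions of the PBW monomials (the collision $y_1(g_2\cdot y_1)=q_{21}\lambda_1$ being exactly what forces the correction $-2q_{21}\lambda_1y_2$ in $\gamma(x_{12}x_1)$), and then establish the convolution inverse -- the paper checks $\gamma\ast\gamma^{-1}(b)=0=\gamma^{-1}\ast\gamma(b)$ on the basis, while you generate $\gamma^{-1}$ by the equivalent recursion $\gamma^{-1}(b)=-\gamma(b)-\gamma(b_{\underline{(1)}})\gamma^{-1}(b_{\underline{(2)}})$ and then check the symmetric identity. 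One small inaccuracy: the comodule condition does \emph{not} determine $\gamma$ uniquely by induction on degree, since $\mE(\bs\lambda)^{\co \Bq_{\bq}}=\k$ means each value is only pinned down up to an additive scalar (e.g.\ $x_1\mapsto y_1+c$ is also a unit-preserving comodule map); this aside is harmless here because your actual argument is the direct verification, but the uniqueness framing should be dropped or weakened to ``unique up to coinvariants.''
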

\pf
The values for $\gamma(x_i), \gamma(x_{12}), \gamma(x_{2}x_1)$ are straightforward. We see that
\begin{align*}
\underline{\Delta}(x_{12}x_1)&=-q_{21}x_1\ot x_{12}+x_{12}\ot x_1+2x_1\ot x_2x_1,\\
\underline{\rho}(y_{12}y_1)&=2q_{21}\lambda_1\,1\ot x_2 -q_{21}y_1\ot x_{12}+x_{12}\ot x_1+2y_1\ot x_2x_1
\end{align*}
and thus setting $\gamma(x_{12}x_1)=(y_{12}y_{1} - 2q_{21}\lambda_1 y_2 )$ gives $(\gamma\otimes \id)\Delta(x_{12}x_1)=\rho\gamma(x_{12}x_1)$ as expected.
On the other hand, it is easy to see $\gamma(x_2x_{12})=y_{2}y_{12}$.
Finally,
\begin{align*}
\underline{\Delta}(x_2x_{12}x_1)&=2x_2x_1\ot x_2x_1 - q_{21}^{2}x_1\ot x_2x_{12} - q_{21}x_{12}\ot x_2x_1 \\
&- q_{21}^{2}x_{12}x_1\ot x_2
+ x_2\ot x_{12}x_1 - q_{21}x_2x_1\ot x_{12} + x_2x_{12}\ot x_1,\\
\underline{\rho}(y_2y_{12}y_1)&=2q_{21}\lambda_1 y_2\ot x_2+
2y_2y_1\ot x_2x_1 - q_{21}^{2}y_1\ot x_2x_{12} - q_{21}y_{12}\ot x_2x_1 \\
&- q_{21}^{2}y_{12}y_1\ot x_2
+ y_2\ot x_{12}x_1 - q_{21}y_2y_1\ot x_{12} + y_2y_{12}\ot x_1.
\end{align*}
So $(\gamma\ot\id)\Delta(x_2x_{12}x_1)=\rho(y_2y_{12}y_1)$ as (recall $\lambda_1q_{21}^3=\lambda_1 q_{21}$):
\begin{align*}
(\gamma\ot\id)&\underline{\Delta}(x_2x_{12}x_1)
=2y_2y_1\ot x_2x_1 - q_{21}^{2}y_1\ot x_2x_{12} - q_{21}y_{12}\ot x_2x_1 \\
&- q_{21}^{2}(y_{12}y_{1} - 2q_{21}\lambda_1 y_2 )\ot x_2
+ y_2\ot x_{12}x_1 - q_{21}y_2y_1\ot x_{12} + y_2y_{12}\ot x_1.
\end{align*}
Now, to see that $\gamma^{-1}$ is the convolution inverse of $\gamma$, we only need to verify that $\gamma\ast\gamma^{-1}(b)=0=\gamma^{-1}\ast\gamma(b)$, for each $b\in\mathbb{B}_{\bq}$. To illustrate this, we expose some calculations below.
\begin{align*}
\gamma\ast\gamma^{-1}(x_{12}x_1)=&\gamma(x_{12}x_1)-q_{21}y_1(y_{12}+2q_{12}y_2y_1)-y_{12}y_1+2y_1(q_{21}y_{12}-y_2y_1)\\
&+\gamma^{-1}(x_{12}x_1)\\
=&-y_{12}y_1+2y_1y_2y_1-y_{12}y_1+2y_{12}y_1-2y_1y_2y_1=0.\\
\gamma\ast\gamma^{-1}(x_{2}x_{12}x_1) =& y_2y_{12}y_1+2y_2y_1(q_{21}y_{12}-y_2y_1)+q_{21}^2y_1y_2y_{12}-y_2y_{12}y_1
\\
&-q_{21}y_{12}(q_{21}y_{12}-y_2y_1)+q_{21}^2(y_{12}y_1-2q_{21}\lambda_1y_2)y_2+y_2y_{12}y_1\\
&-q_{21}^2\lambda_{12}+y_2(-y_{12}y_1+2q_{21}\lambda_1y_2)-q_{21}y_2y_1(y_{12}+2q_{12}y_2y_1)\\
=&2q_{21}y_2y_1y_{12}+q_{21}^2(y_{12}+q_{12}y_2y_1)y_{12}
-q_{21}^2\lambda_{12}+q_{21}y_{12}y_2y_1\\
&+q_{21}^2y_{12}y_1y_2-2q_{21}\lambda_1\lambda_2-q_{21}^2\lambda_{12}+2q_{21}\lambda_1\lambda_2-q_{21}y_2y_1y_{12}\\
=&2y_2y_{12}y_1-q_{21}y_2y_1y_{12}
+q_{21}y_{12}y_2y_1+q_{21}^2y_{12}(y_{12}+q_{12}y_2y_1)\\
&-q_{21}^2\lambda_{12}-y_2y_{12}y_1\\
=&y_2y_{12}y_1-y_2y_{12}y_1=0.
\end{align*}
The rest of the computations, which are simpler, are left to the reader.
\epf

\begin{remark}
Notice that $\gamma\colon\Bq_{\bq}\to\mE(\bs\lambda)$ as in Lemma \ref{lem:gamma} is $H$-linear. Indeed, it suffices to check this on $\gamma(x_{12}x_1)$ and this follows since
\[
\lambda_1 h\cdot y_2=\lambda_1\chi_2(h)y_2= \lambda_1\chi_1^2(h)\chi_2(h)y_2, \quad h\in H.
\]
\end{remark}

\begin{theorem}\label{thm:sigma-a2}Let $\bs\lambda=(\lambda_1,\lambda_2,\lambda_{12})$.
The Hopf cocycle $\sigma_{\bs\lambda}\coloneqq\sigma$ associated to the cleft object $\mE(\bs\lambda)$ is given by
\begin{center}
	\begin{table}[H]
		\resizebox{12cm}{!}
		 {\begin{tabular}{|c|c|c|c|c|c|c|c|c|}
				\hline
				$\sigma$ &  $x_1$ & $x_2$ & $x_{12}$ & $x_2x_1$  &  $x_2x_{12}$ & $x_{12}x_1$ & $x_2x_{12}x_1$ \\
				\hline
				$x_1$   & $\lambda_1$ & $0$ & $0$ & $0$ & $\lambda_{12}$ & $0$ & $0$ \\
				\hline
				$x_2$  & $0$ & $\lambda_2$ & $0$ & $0$  &  $0$ & $2q_{12}\lambda_1\lambda_2$& $0$\\
				\hline
				$x_{12}$  & $0$ & $0$ & $\lambda_{12}$ & $0$ &  $0$ & $0$ & $0$ \\
				\hline
				$x_2x_1$  & $0$ & $0$ & $0$ & $-q_{21}\lambda_1\lambda_2$  &  $0$ & $0$ & $0$   \\
				\hline
				$x_2x_{12}$  & $0$ & $0$ & $0$ & $0$& $-q_{12}\lambda_2\lambda_{12}$ & $0$ & $0$  \\
				\hline
				$x_{12}x_1$  & $0$ & $2q_{12}\lambda_1\lambda_2 + \lambda_{12}$ & $0$ & $0$ &  $0$ & $q_{12}\lambda_{12}\lambda_1+4\lambda_1^2\lambda_2$ & $0$  \\
				\hline
				$x_2x_{12}x_1$ & $0$ & $0$ & $0$ & $0$  &  $0$ & $0$ & $q_{12}\lambda_2\lambda_{12}\lambda_1$ \\
				\hline
		\end{tabular}}
	\end{table}
\end{center}
\end{theorem}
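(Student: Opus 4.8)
The plan is to use the reduction tools of Section~\ref{sec:tools} so that only the top two rows of the table are computed by hand, the remaining entries being forced by the Hopf cocycle decomposition. Since $\sigma_{\bs\lambda}$ arises from the honest cleft object $\mE(\bs\lambda)$ via \eqref{eqn:formula-cleft}, it is automatically a Hopf $2$-cocycle, so the content of the statement is the \emph{evaluation} of $\sigma_{\bs\lambda}$ on $\mathbb{B}_{\qb}\times\mathbb{B}_{\qb}$, not its cocycle property. Concretely, by Corollary~\ref{cor:primeralinea}(a) the cocycle is determined by the set $S_\sigma=\{\sigma(x_i,b):i\in\I_2,\ b\in\mathbb{B}_{\qb}\}$, and Lemma~\ref{lem:descomposi} expresses every $\sigma(a,b)$ with $\deg a\geq 2$ as a polynomial in the elements of $S_\sigma$ with coefficients governed by the matrix coefficients of $c^{\qb}$. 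Thus it suffices to compute the two rows indexed by $x_1$ and $x_2$ and then run the recursion.

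For those two rows, since $x_1,x_2\in V=\P(B)$ with $\rho(x_i)=g_i\ot x_i$, Lemma~\ref{lem:cuentacociclo} specializes to
\[
\sigma(x_i,b)=\gamma(g_i\cdot b_{(1)})\,\gamma^{-1}(x_i\,b_{(2)}),\qquad b\in\mathbb{B}_{\qb}\cap B^{+}.
\]
The three inputs are the braided coproducts $\underline{\Delta}(b)$ (several already recorded in the proof of Lemma~\ref{lem:gamma}), the diagonal action $g_i\cdot x_j=q_{ij}x_j$, and the values of $\gamma,\gamma^{-1}$ from Lemma~\ref{lem:gamma}; $H$-linearity of $\gamma$ simplifies the factor $\gamma(g_i\cdot b_{(1)})$. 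Each product is evaluated in $\mE(\bs\lambda)$ and collapses to a scalar upon applying the relations \eqref{eqn:cleft}. I expect the only nonzero off-diagonal entries here to be $\sigma(x_1,x_2x_{12})=\lambda_{12}$ and $\sigma(x_2,x_{12}x_1)=2q_{12}\lambda_1\lambda_2$.

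With $S_\sigma$ in hand I would obtain every remaining entry by substituting into the explicit degree-$2,3,4$ instances of the decomposition recorded in Example~\ref{exa:descompos-lowdregree}, reading the coactions $w_{(-1)}\cdot b$, $y_{(-1)}\cdot b$, etc.\ off the diagonal braiding and reassembling products into PBW monomials of $\mathbb{B}_{\qb}$. Rows such as $\sigma(x_{12},-)$ are handled by linearity, writing $x_{12}=x_1x_2-q_{12}x_2x_1$ and decomposing each monomial, so that every higher entry ultimately reduces to the two known rows.

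\textbf{Main obstacle.} The genuine difficulty is purely the bookkeeping: tracking braiding signs under $q_{11}=q_{22}=-1$ and $q_{12}q_{21}=-1$, and correctly normalizing products in $\mE(\bs\lambda)$ (where $y_{12}=y_1y_2-q_{12}y_2y_1$ and $y_i^2=\lambda_i$) to PBW form in order to isolate the scalar part. The most delicate entries are $\sigma(x_{12}x_1,x_2)=2q_{12}\lambda_1\lambda_2+\lambda_{12}$ and $\sigma(x_{12}x_1,x_{12}x_1)=q_{12}\lambda_{12}\lambda_1+4\lambda_1^2\lambda_2$, where several summands of the decomposition contribute and mixed terms in $\lambda_1,\lambda_2,\lambda_{12}$ must be combined carefully. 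A good internal check is the asymmetry $\sigma(x_2,x_{12}x_1)=2q_{12}\lambda_1\lambda_2\neq\sigma(x_{12}x_1,x_2)$, which confirms that the order of the arguments has been respected throughout the recursion.
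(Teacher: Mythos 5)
Your proposal is correct and follows essentially the same route as the paper's proof: compute the set $S_\sigma$ (the rows indexed by $x_1,x_2$) from Lemma \ref{lem:cuentacociclo} together with the section and its convolution inverse from Lemma \ref{lem:gamma}, and then propagate to all remaining entries via the recursion of Lemma \ref{lem:descomposi} as instantiated in Example \ref{exa:descompos-lowdregree}, using linearity in $x_{12}=x_1x_2-q_{12}x_2x_1$ and the diagonal braiding coefficients. Your predicted values for $S_\sigma$ and your identification of the delicate entries (e.g.\ $\sigma(x_{12}x_1,x_{12}x_1)$, which is precisely the sample computation carried out in the paper) agree with the published table.
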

\pf
In first place, we use Lemma \ref{lem:cuentacociclo} to calculate the elements of $S_\sigma$ as in \eqref{eqn:S-sigma}. Indeed, let $i,j\in\I_2$, then
\begin{align*}
  \sigma(x_i,x_j)=-q_{ij}y_jy_i+\gamma^{-1}(x_ix_j)=\delta_{ij}\lambda_i,
\end{align*}
here we use that $\gamma^{-1}(x_ix_j)=q_{ij}y_jy_i$.
It is easy to see that
\[
\sigma(x_i,x_{12})=\sigma(x_i,x_2x_1)=0.
\]
For the other hand, we show $\sigma(x_1,x_2x_{12})$ and $\sigma(x_2,x_{12}x_1)$.
\begin{align*}
\sigma(x_1,&x_2x_{12})=y_2y_{12}y_2-2q_{12}y_2y_1\gamma^{-1}(x_1x_2)-y_2\gamma^{-1}(x_{12}x_1)-y_{12}\gamma^{-1}(x_1x_2)\\
&\qquad \qquad +\gamma^{-1}(x_1x_2x_{12})\\ =&y_2y_{12}y_2-2y_2y_1y_2y_1+y_2(y_{12}-2q_{21}\lambda_1y_2)-q_{12}y_{12}y_2y_1-y_1y_{12}y_2+\lambda_{12}\\
=&y_2y_{12}y_1-2y_2y_{12}y_1-2q_{12}\lambda_1\lambda_2+y_2y_{12}y_1+2q_{12}\lambda_1\lambda_2+\lambda_{12}=\lambda_{12}.
\end{align*}
\begin{align*}
\sigma(x_2,&x_{12}x_1)=q_{21}^2(y_{12}y_1-2q_{21}\lambda_1y_2)y_2+y_1y_2y_{12}\\
&\qquad \qquad -q_{21}y_{12}\gamma^{-1}(x_2x_1)+\gamma^{-1}(x_2x_{12}x_1)\\
=&y_{12}y_1y_2-2q_{21}\lambda_1\lambda_2+\lambda_{12}-y_2y_{12}y_1-y_{12}y_1y_2+y_2y_{12}y_1+\lambda_{12}=2q_{12}\lambda_1\lambda_2.
\end{align*}
The reader will have no difficulty to see that
\[
\sigma(x_1,x_{12}x_1)=\sigma(x_2,x_2x_{12})=\sigma(x_i,x_1x_{12}x_2)=0.
\]
Thus, $S_\sigma=\{0,\lambda_1,\lambda_2,\lambda_{12},2q_{12}\lambda_1\lambda_2\}$. From now on, we use the formulas of decomposition in Example \ref{exa:descompos-lowdregree} to get the others values of $\sigma$, following Lemma \ref{lem:descomposi}. In this case, the matrix coefficients of the braiding $(c_{i,j}^{p,q})_{
	\begin{smallmatrix}
	i,j,\\ p,q
	\end{smallmatrix}
	\in\I}$ are given by $c_{i,j}^{p,q}=\delta_{i,q}\delta_{j,p}q_{ij}$ so $\mathbb{Z}[c_{i,j}^{p,q}]_{\begin{smallmatrix}
		i,j,\\ p,q
	\end{smallmatrix}
	\in\I}=\mathbb{Z}[q_{12}^{\pm 1}]\simeq \mathbb{Z}$, as $q_{11}=q_{22}=-1$ and $q_{21}=-q_{12}^{-1}=\mp1$.

One example of application is showed below.
\begin{align*}
 \sigma(x_{12}x_1,&x_{12}x_1)=\sigma(x_1,x_2x_1x_{12}x_1)+\sigma(x_2,g_1\cdot (x_{12}x_1))\sigma(x_1,x_1)\\
 &\quad+\sigma(g_2\cdot x_1,g_2\cdot (x_{12}x_1))\sigma(x_1,x_2)-\sigma(x_1,x_2)\sigma(x_1,x_{12}x_1)\\
 &\quad-\sigma(x_1,g_2\cdot x_1)\sigma(x_2,x_{12}x_1)+\sigma(x_2x_1,2x_1)\sigma(x_1,x_2x_1)\\
 &\quad-\sigma(x_2x_1,q_{21}x_1)\sigma(x_1,x_{12})+\sigma(x_2x_1,x_{12})\sigma(x_1,x_1)\\
 &\quad+\sigma(x_2,g_1\cdot(2x_1))\sigma(x_1,x_1x_2x_1)-\sigma(x_2,g_1\cdot(q_{21}x_1))\sigma(x_1,x_1x_{12}x_1)\\
 &\quad+\sigma(x_2,g_1\cdot x_{12})\sigma(x_1,x_1^2)+\sigma(g_2\cdot x_1,g_2\cdot (2x_1))\sigma(x_1,x_2^2x_1)\\
 &\quad-\sigma(g_2\cdot x_1,g_2\cdot (q_{21}x_1))\sigma(x_1,x_2x_{12})+\sigma(g_2\cdot x_1,g_2\cdot x_{12})\sigma(x_1,x_2x_1)\\
 &=q_{12}\sigma(x_2, x_{12}x_1)\sigma(x_1,x_1)-q_{21}\sigma(x_1, x_1)\sigma(x_2,x_{12}x_1)\\
 &\quad-q_{21}\sigma( x_1,x_1)\sigma(x_1,x_2x_{12})\\
 &=2q_{12}\sigma(x_2, x_{12}x_1)\sigma(x_1,x_1)+q_{12}\sigma( x_1,x_1)\sigma(x_1,x_2x_{12})\\
&=4\lambda_1^2\lambda_2+q_{12}\lambda_{12}\lambda_1.
\end{align*}
The other values are computed analogously.
\epf

\subsection{Hochschild cocycles}\label{sec:hoch-a2}

Recall the cocycles $\xi_j^i\in \Z^2(\Bq_{\bq},\k)$, $i,j\in\I_2$, from Example \ref{exa:xi_ij}.
Additionally, we denote  $\xi_0=\eps\ot\eps\in \Z^2(\Bq_{\bq},\k)$, the cocycle such that $\xi_0(1,1)=1$, and zero elsewhere.

We shall consider two more Hochschild 2-cocycles $\xi_{121}^2,\xi_{212}^1$ in $\Bq_{\bq}$. Namely,
let $\xi_{121}^2\colon \Bq_{\bq}\ot \Bq_{\bq}\to \k$ be the linear map given by
\begin{align}\label{eqn:xi-121}
\begin{split}
\xi_{121}^2(x_2,x_{12}x_1)&=\xi_{121}^2(x_2x_{12},x_1)=\xi_{121}^2(x_2x_1,x_2x_1)=\xi_{121}^2(x_{12},x_{12})=1,\\
\xi_{121}^2(x_{12},x_2x_1)&=\xi_{121}^2(x_2x_1,x_{12})=-q_{12}.
\end{split}
\end{align}
and zero elsewhere in the basis $\mathbb{B}\ot \mathbb{B}$. 
Following Lemma  \ref{lem:hoch}, it is an easy computation to check that $\xi_{121}^2\in \Z^2(\Bq_{\bq},\k)$. 
As well, we define 
$\xi_{212}^1\in \Z^2(\Bq_{\bq},\k)$ via
\begin{align}\label{eqn:xi-212}
\xi_{212}^1(x_1,x_2x_{12})=\xi_{212}^1(x_{12},x_{12})=\xi_{212}^1(x_{12}x_{1},x_2)=1
\end{align}
and zero elsewhere in $\mathbb{B}\ot \mathbb{B}$.

We further note that $\xi_{121}^2+\xi_{212}^1\in\B^2(B,\k)$. Indeed, let $f\colon B\to \k$ be the linear map defined on the basis $\mathbb{B}$ by $f(x_2x_{12}x_1)=1$ and zero elsewhere. Then 
	\begin{align}\label{eqn:cobordism}
	(\xi_{121}^2-\xi_{212}^1)(b,b')=f(bb'), \qquad b,b'\in\mathbb{B}.
	\end{align}

Recall the group $\Gamma\leq G(H)$ from \eqref{eqn:Gamma}.
\begin{lemma}\label{lem:generadores-eta-abeliano}
	$\Z^2(\Bq_{\bq},\k)^\Gamma$ is spanned  over $\k$ by $\xi_0, \xi_1^1,\xi_2^2,\xi_{121}^2, \xi_{212}^1$.
	
Furthermore,
\begin{enumerate}[label=(\alph*)]
\item $\xi_i^i\in \Z^2(\Bq_{\bq},\k)^H$ if and only if $\chi_i^2=\eps$,
\item $\xi_{iji}^j\in \Z^2(\Bq_{\bq},\k)^H$ if and only if $(\chi_{1}\chi_{2})^2=\eps$; $i,j\in\I_2$.
\end{enumerate}
\end{lemma}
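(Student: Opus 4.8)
The plan is to handle the $\Gamma$-invariant space by a weight argument combined with the structural reduction of Lemma~\ref{lem:hoch}, and then to read off the precise $H$-invariance conditions (a) and (b) by refining the weight bookkeeping from $\Gamma$ to $H$. First I would record the $\Gamma$-action: since $g_1,g_2$ are group-like, $\Gamma$ acts on $B=\Bq_{\bq}$ by algebra automorphisms, so each PBW monomial in $\mathbb{B}$ is a common eigenvector, and from \eqref{eqn:realization} I would compute the weights in $\widehat{\Gamma}$ as $\mathrm{wt}(x_1)=\chi_1$, $\mathrm{wt}(x_2)=\chi_2$, $\mathrm{wt}(x_{12})=\mathrm{wt}(x_2x_1)=\chi_1\chi_2$, $\mathrm{wt}(x_2x_{12})=\chi_1\chi_2^2$, $\mathrm{wt}(x_{12}x_1)=\chi_1^2\chi_2$ and $\mathrm{wt}(x_2x_{12}x_1)=\chi_1^2\chi_2^2$. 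The hypothesis \eqref{eqn:q12} forces $q_{21}^2=1$, whence $\chi_1^2|_\Gamma=\chi_2^2|_\Gamma=\eps$; a direct check using $\chi_i(g_i)=-1$ then shows the four characters $\eps,\chi_1,\chi_2,\chi_1\chi_2$ of $\Gamma$ are pairwise distinct. Evaluating the invariance condition \eqref{eqn:eta-invariant} at $g\in\Gamma$, a $\Gamma$-invariant $\eta$ must vanish on $(a,b)\in\mathbb{B}\times\mathbb{B}$ unless $\mathrm{wt}(a)\,\mathrm{wt}(b)=\eps$, equivalently (all weights having order $\le 2$ over $\Gamma$) unless $\mathrm{wt}(a)=\mathrm{wt}(b)$.

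The dimension bound then comes from Remark~\ref{rem:hoch}: every $\eta\in\Z^2(B,\k)$ is determined by the values $\{\eta(b,x_i):b\in\mathbb{B},\,i\in\I_2\}\cup\{\eta(1,1)\}$. For a $\Gamma$-invariant $\eta$ the weight constraint kills $\eta(b,x_i)$ unless $\mathrm{wt}(b)=\chi_i$, and by the distinctness above the only such monomials are $x_1,x_2x_{12}$ for $i=1$ and $x_2,x_{12}x_1$ for $i=2$. Hence $\eta$ is determined by the five scalars $\eta(1,1),\,\eta(x_1,x_1),\,\eta(x_2,x_2),\,\eta(x_2x_{12},x_1),\,\eta(x_{12}x_1,x_2)$, so evaluation at these five arguments is an injection $\Z^2(\Bq_{\bq},\k)^\Gamma\hookrightarrow\k^5$ and $\dim\Z^2(\Bq_{\bq},\k)^\Gamma\le 5$.

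To finish the first assertion I would observe that $\xi_0,\xi_1^1,\xi_2^2,\xi_{121}^2,\xi_{212}^1$ all lie in $\Z^2(\Bq_{\bq},\k)^\Gamma$: they are cocycles by Lemma~\ref{exa:xi_ij} and the computations accompanying \eqref{eqn:xi-121}--\eqref{eqn:xi-212}, and each is supported on pairs of equal weight, hence $\Gamma$-invariant. A glance at their defining values shows they map to the standard basis of $\k^5$ under the evaluation map, so by injectivity they are linearly independent and span. For (a) and (b) I would lift the weight bookkeeping to $H$: by \eqref{eqn:realization} each monomial is an eigenvector for all of $H$ with weight in $\Alg(H,\k)$, so \eqref{eqn:eta-invariant} forces, on any support pair $(a,b)$, the identity $\mathrm{wt}(a)\ast\mathrm{wt}(b)=\eps$. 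For $\xi_i^i$, supported only on $(x_i,x_i)$, this is exactly $\chi_i^2=\eps$, yielding (a); for $\xi_{iji}^j$ I would check that every pair in its support carries the common weight $(\chi_1\chi_2)^2$, so $H$-invariance amounts to the single condition $(\chi_1\chi_2)^2=\eps$, yielding (b).

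The computations I expect to be most delicate are the weights of the degree-three and degree-four monomials and, for (b), the verification that all six support pairs of each $\xi_{iji}^j$ share the weight $(\chi_1\chi_2)^2$; this relies on $\chi_1,\chi_2$ commuting in $\Alg(H,\k)$, equivalently on $x_{12}$ being a genuine $H$-weight vector, which I would justify from the principal realization. Everything else is formal once the weight support is pinned down: the bound $\dim\le 5$ and the spanning are immediate from Remark~\ref{rem:hoch}, and (a), (b) reduce to single character equations.
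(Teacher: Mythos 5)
Your proposal is correct, and its core is the same as the paper's: reduce via Lemma~\ref{lem:hoch} (Remark~\ref{rem:hoch}) and then impose $\Gamma$-invariance as a character constraint on basis pairs. The differences are in the bookkeeping, and they mostly work in your favor. The paper takes its determination set with the generator in the \emph{first} slot, $\{\eta(x_i,b)\}$, and therefore has to supplement the weight argument with algebra relations ($x_1x_2x_1=x_{12}x_1$, $x_2x_1x_2=x_2x_{12}$, $x_i^2=0$) to kill values such as $\eta(x_1,x_{12}x_1)$ and $\eta(x_2,x_2x_{12})$; moreover it avoids \eqref{eqn:q12}, ruling out $\eta(x_1,x_2)\neq 0$ by deriving $q_{12}=q_{21}=-1$ against $q_{12}q_{21}=-1$, whereas you invoke $q_{12}=\pm1$ (legitimately: it is in force in \S\ref{sec:a2}) to conclude that all $\Gamma$-weights have order at most $2$, after which your choice of the set $\{\eta(b,x_i)\}$ from Remark~\ref{rem:hoch} lets the pure weight argument suffice with no algebra relations at all. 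You also prove both inclusions: the paper's proof only establishes $\Z^2(\Bq_{\bq},\k)^\Gamma\subseteq\k\{\xi_0,\xi_1^1,\xi_2^2,\xi_{121}^2,\xi_{212}^1\}$, leaving the $\Gamma$-invariance of the five cocycles (which genuinely needs $q_{12}=\pm1$) and their linear independence implicit, while your evaluation-map argument handles both at once. Finally, for (a) and (b), where the paper says ``straightforward,'' you correctly isolate the one non-trivial point, namely that $\chi_1\ast\chi_2=\chi_2\ast\chi_1$ so that all PBW monomials, and $x_{12}$ in particular, are genuine $H$-weight vectors; this does follow from the principal realization as you anticipate: taking $g=1$ in $\chi_i(h)g=\chi_i(h_{(2)})h_{(1)}g\Ss(h_{(3)})$ and applying $\chi_j$ yields $\chi_j\ast\chi_i\ast(\chi_j\circ\Ss)=\chi_i$, which is exactly the required commutation.
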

\pf
%It is straightforward to see that $\xi_0, \xi_1^1,\xi_2^2,\xi_{121}^2, \xi_{212}^1\in \Z^2(\Bq_{\bq},\k)^\Gamma$. Indeed, 
%$\xi_i^i(g_j\cdot x_k,g_j\cdot x_l)=q_{jk}q_{jl}\delta_{k,l}\xi_i^i(x_k, x_l)=\delta_{k,l}\xi_i^i(x_k, x_l)=\delta_{k,l}\delta_{i,k}\xi_i^i(x_k, x_l)=\xi_i^i(x_k, x_l)$, recall \eqref{eqn:q12}. Similarly,
%
By Lemma \ref{lem:hoch}, see Remark \ref{rem:hoch}, $\eta\in \Z^2(\Bq_{\bq},\k)$ is determined by the values
\begin{align*}
\eta&(x_1,x_1), & \eta&(x_1,x_2), & \eta&(x_{1},x_{12}), & \eta&(x_1,x_2x_{12}), \\
\eta&(x_1,x_2x_1), & \eta&(x_2,x_2), & \eta&(x_{2},x_1), & \eta&(x_2,x_{12}), \\
\eta&(x_2,x_{12}x_1) & \eta&(1,1).
\end{align*}
Furthermore, as $x_1x_2x_1=x_{12}x_1$ and $x_2x_1x_2=x_2x_{12}$, we get
\begin{align*}
\eta(x_1,x_{12}x_1)=\eta(x_2,x_2x_{12})=0.
\end{align*}
Also, it is easy to see that
\begin{align*}
\eta(x_1,x_2x_{12}x_1)&=\eta(x_2,x_2x_{12}x_1)=\eta(x_2,x_2x_{12})=\eta(x_2,x_2x_1)=0,\\
\eta(x_1,x_2x_1)&=q_{21}\eta(x_1,x_{12}).
\end{align*}
Now we let $\Gamma$ act and obtain
\[
\eta(x_1,x_2)=\eta(g_1\cdot x_1,g_1\cdot x_2)=-q_{12}\eta(x_1,x_2)
\]
which forces $q_{12}=-1$ but also
\[
\eta(x_1,x_2)=\eta(g_2\cdot x_1,g_2\cdot x_2)=-q_{21}\eta(x_1,x_2),
\]
which also gives $q_{21}=-1$, a contradiction to $q_{12}q_{21}=-1$. Thus $\eta(x_1,x_2)=0$, similarly $\eta(x_2,x_1)=0$.
Similarly, we need to have
\[
-q_{21}^2\eta(x_{1},x_{12})=\eta(x_{1},x_{12}),
\]
which gives $\eta(x_{1},x_{12})=0$; and similarly
\[
-q_{12}^2\eta(x_{2},x_{12})=\eta(x_{2},x_{12}), \ -q_{12}^2\eta(x_2x_1,x_2)=\eta(x_2x_1,x_2)
\]
so $\eta(x_{12},x_2)=0$.
That is, we are left with the values
\begin{align*}
\eta&(1,1), &\eta&(x_1,x_1), & \eta&(x_2,x_2), & \eta&(x_2,x_{12}x_1), & \eta&(x_1,x_2x_{12})
\end{align*}
and therefore $\eta\in\k\{\xi_0, \xi_1^1,\xi_2^2,\xi_{121}^2, \xi_{212}^1\}$.

The second statement is straightforward; recall  $h\cdot x_i=\chi_i(h)x_i$, $h\in H, i\in\I_\theta$.\epf

\subsection{Exponentials}

Let $\eta\in \Z^2(\Bq_{\bq},\k)^H$, we are interested in finding the conditions for $\eta$ such that $e^\eta$ is a Hopf cocycle; set
\begin{align}\label{eq:eta-abeliano}
\eta=\eta_1\xi_1^1+ \eta_2\xi^2_2+\eta_{121}\xi_{121}^2+\eta_{212}\xi_{212}^1.
\end{align}
In particular, $\eta(x_i,x_i)=\eta_i$, $i\in\I_2$, $\eta(x_{2},x_{12}x_1)=\eta_{121}$ and $\eta(x_{1},x_2x_{12})=\eta_{212}$.

Next proposition characterizes the cocycles $\eta$ that satisfy \eqref{eqn:conm1} and \eqref{eqn:conm2}; so in particular $e^{\eta}$ is a Hopf cocycle.
It is enough to assume that $\eta\in \Z^2(\Bq_{\bq},\k)^\Gamma$.

\begin{proposition}\label{pro:exponencial-abeliano}
A cocycle $\eta\in\Z^2(\Bq_{\bq},\k)^\Gamma$ satisfies \eqref{eqn:conm1} and \eqref{eqn:conm2} if and only if
\[
\eta \in \C\coloneqq\{\eta_1\xi_1^1,\eta_2\xi_2^2, \eta_{121}\xi_{121}^2+\eta_{212}\xi_{212}^1:\eta_1,\eta_2,\eta_{121},\eta_{212}\in\k\}.
\]
In particular if $\eta\in \C$, then $e^\eta$ is a Hopf 2-cocycle.
\end{proposition}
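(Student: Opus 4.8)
The plan is to treat \eqref{eqn:conm1} and \eqref{eqn:conm2} as quadratic conditions on the coefficients of $\eta$. By Lemma \ref{lem:eta11} we may assume $\eta(1,1)=0$, so $\eta$ has the form \eqref{eq:eta-abeliano}, $\eta=\eta_1\xi_1^1+\eta_2\xi_2^2+\eta_{121}\xi_{121}^2+\eta_{212}\xi_{212}^1$. Each element-form identity \eqref{eqn:conds-in-elements} is bilinear in $\eta$, so evaluating on triples of basis elements turns it into a system of homogeneous quadratic equations in $(\eta_1,\eta_2,\eta_{121},\eta_{212})$. The key observation is that $\C$ is exactly the common zero locus of the five monomials $\eta_1\eta_2,\ \eta_1\eta_{121},\ \eta_1\eta_{212},\ \eta_2\eta_{121},\ \eta_2\eta_{212}$: these vanish simultaneously precisely when $\eta_1=\eta_2=0$ (the plane $\eta_{121}\xi_{121}^2+\eta_{212}\xi_{212}^1$), or $\eta_2=\eta_{121}=\eta_{212}=0$ (the axis $\k\xi_1^1$), or $\eta_1=\eta_{121}=\eta_{212}=0$ (the axis $\k\xi_2^2$), which is exactly $\C$. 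Thus the whole statement amounts to showing the commutation conditions are equivalent to the vanishing of these five cross terms.

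Before computing I would try to halve the work. The generators $\xi_1^1,\xi_2^2$ are symmetric, so on the degree $(1,1)$ part Lemma \ref{lem:eta-symmetric} makes \eqref{eqn:conm1} and \eqref{eqn:conm2} equivalent; more ambitiously I would test the hypothesis \eqref{eqn:ua-da} of Lemma \ref{lem:conmutation} for the full $\eta$, which, if it holds, reduces everything to checking \eqref{eqn:conm1} alone. If that fails, both identities are checked, the second being formally dual to the first.

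For sufficiency I treat the three pieces of $\C$ in turn. The axis cocycles $\eta_1\xi_1^1$ and $\eta_2\xi_2^2$ are concentrated in bidegree $(1,1)$, so by \cite[Lemma 2.3]{GaM} the conditions collapse to the $V^{\ot 4}$ identities \eqref{eqn:GaM-V4}; since $\xi_i^i$ is supported on $x_i\ot x_i$ and $c(x_i\ot x_i)=q_{ii}\,x_i\ot x_i=-x_i\ot x_i$, evaluating \eqref{eqn:GaM-V4} on $x_i^{\ot 4}$ yields the identity $q_{ii}=q_{ii}^3$, which holds. For the plane part I verify \eqref{eqn:conds-in-elements} directly: the supports listed in \eqref{eqn:xi-121} and \eqref{eqn:xi-212} confine the nonzero contributions to a short explicit list of basis triples, and the coboundary relation \eqref{eqn:cobordism} helps organize the values on products. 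Checking that these pure-plane contributions cancel is precisely the statement that the plane lies in $\C$, and it is reused in the converse below.

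For necessity I isolate each cross term with a well-chosen triple. The lowest-degree component of \eqref{eqn:conds-in-elements}, where both factors of $\eta$ are forced onto degree $(1,1)$ inputs, sees only $\eta^{(11)}=\eta_1\xi_1^1+\eta_2\xi_2^2$ and again reduces to \eqref{eqn:GaM-V4}; evaluating the first identity of \eqref{eqn:GaM-V4} on $x_1\ot x_2\ot x_1\ot x_2$ gives $(q_{21}-q_{12})\eta_1\eta_2=0$, and since $q_{21}=-q_{12}$ with $q_{12}=\pm 1$ the factor $q_{21}-q_{12}=-2q_{12}$ is invertible, forcing $\eta_1\eta_2=0$. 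The four mixed monomials $\eta_1\eta_{121},\eta_1\eta_{212},\eta_2\eta_{121},\eta_2\eta_{212}$ occur in those components where one factor of $\eta$ evaluates in bidegree $(1,1)$ and the other on a higher-degree pair supported by $\xi_{121}^2$ or $\xi_{212}^1$; choosing triples adapted to those supports forces each to vanish, while the pure-plane component vanishes identically and yields no further relation. This last point is the main obstacle: the braided coproduct of $\Bq_{\bq}$ spreads \eqref{eqn:conds-in-elements} over many summands, and because $\xi_{121}^2,\xi_{212}^1$ have support scattered across several bidegrees, tracking their cross terms with $\xi_1^1,\xi_2^2$---and certifying that exactly these five monomials, and no others, appear---demands careful pruning by the grading. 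Once $\eta\in\C$ is established, the final assertion is immediate: \eqref{eqn:conm1} and \eqref{eqn:conm2} are sufficient for $e^\eta$ to be a Hopf $2$-cocycle, as recalled just before the statement.
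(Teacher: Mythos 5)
Your skeleton coincides with the paper's own proof: you characterize $\C$ as the common zero locus of the five cross monomials (this is exactly the paper's reformulation \eqref{eq:eta-Ceta}), you prove necessity by evaluating \eqref{eqn:conds-in-elements} on chosen triples, and for sufficiency you treat the axes via \cite[Lemma 2.3]{GaM} and \eqref{eqn:GaM-V4} and the plane via Lemma \ref{lem:conmutation}, which is precisely what the paper does. The one computation you carry out in full, $\eta_1\eta_2=0$, is correct and is a nice variant of the paper's: you project onto the total-degree-$4$ component of the conditions (where, since $\eta(1,1)=0$, only the $(1,1)$-part $\eta_1\xi_1^1+\eta_2\xi_2^2$ can contribute) and then invoke \eqref{eqn:GaM-V4} on $x_1\ot x_2\ot x_1\ot x_2$, whereas the paper evaluates \eqref{eq:condicionsuficienteeta} directly at $y=x_2x_1$, $z=x_2$ and $y=x_1x_2$, $z=x_1$.

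The gap is that the remaining four monomials and the entire plane verification are asserted rather than proved, and these are the actual content of the proposition. For necessity, ``choosing triples adapted to those supports'' must be exhibited: the paper needs, beyond the two evaluations above specialized at $x=x_2x_{12}$ and $x=x_{12}x_1$ (giving $\eta_2\eta_{121}=0$ and $\eta_1\eta_{212}=0$), the further evaluation $y=x_{12}x_1$, $z=x_1$, whose expansion produces the term $-2q_{21}\,\eta(x_1,x_1)\eta(x,x_2x_1)$ and hence, at $x=x_2x_1$, the relation $\eta_1\eta_{121}=0$; nothing in your outline produces this relation, and finding it requires expanding the braided coproduct of the degree-$3$ element $x_{12}x_1$ correctly. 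For sufficiency on the plane, your claim that ``the pure-plane contributions cancel'' by support pruning mischaracterizes what happens: on many triples both sides of \eqref{eqn:conm1} are nonzero (e.g.\ for $y=x_{12}$, $z=x_{12}x_1$ both sides equal $(\eta_{121}+\eta_{212})\,\eta(-,x_1)$) and the point is that they \emph{match}, which the paper certifies by the case-by-case check of \eqref{eqn:ua-da} over $\deg y+\deg z\in\{4,\dots,8\}$. This matching cannot be taken on faith from supports or cohomology-level data, because the conditions \eqref{eqn:conm1}--\eqref{eqn:conm2} are not stable under adding coboundaries: $\xi_1^1$ satisfies them while the cohomologous cocycle $\xi_1^1+(\xi_{121}^2-\xi_{212}^1)$ does not (Lemma \ref{lem:converse} and Remark \ref{rem:commutation-not-necessary}), even though $e^\eta$ is a Hopf cocycle in both cases. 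So until those evaluations and the degree-by-degree check are actually performed, the ``if and only if'' is not established.
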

We will show in Lemma \ref{lem:converse}  that conditions \eqref{eqn:conm1} and \eqref{eqn:conm2} are not necessary for $e^\eta$ to be a Hopf 2-cocycle.

\begin{remark}
	Let $\eta=\eta_1\xi_1^1+ \eta_2\xi^2_2+\eta_{121}\xi_{121}^2+\eta_{212}\xi_{212}^1\in \Z^2(\Bq_{\bq},\k)^\Gamma$.
	We can rephrase Proposition \ref{pro:exponencial-abeliano} as follows:
 $\eta\in \C$ if and only if
  \begin{align}\label{eq:eta-Ceta}
    \eta_i\eta_j=\eta_i\eta_{iji}=\eta_i\eta_{jij}=0,\qquad i\neq j\in \I_2.
  \end{align}
\end{remark}

\pf
Suppose that $\eta$ verifies \eqref{eqn:conm1} and \eqref{eqn:conm2}.
That is for any $x,y,z\in \Bq_{\bq}$,
\begin{align}\label{eq:condicionsuficienteeta}
\eta(x,y_{(1)}z_{(1)})\eta(y_{(2)},z_{(2)})&=\eta(x,y_{(2)}z_{(2)})\eta(y_{(1)},z_{(1)}),\\
\label{eq:condicionsuficienteeta2}
\eta(x_{(1)}y_{(1)},z)\eta(x_{(2)},y_{(2)})&=\eta(x_{(2)}y_{(2)},z)\eta(x_{(1)},y_{(1)}).
\end{align}
Now, we see that if $y=x_2x_1$ and $z=x_2$, then \eqref{eq:condicionsuficienteeta} gives
\begin{align}\label{eq:condicioneta11}
q_{21}\eta(x,x_1)\eta(x_2,x_2)=q_{12}\eta(x,x_1)\eta(x_2,x_2), \quad\text{for any $x\in\Bq_{\bq}$.}
\end{align}
Similarly, if $y=x_1x_2$ and $z=x_1$, then \eqref{eq:condicionsuficienteeta} gives
\begin{align}\label{eq:condicioneta22}
q_{21}\eta(x,x_2)\eta(x_1,x_1)=q_{12}\eta(x,x_2)\eta(x_1,x_1),\quad\text{for any $x\in \Bq_{\bq}$.}
\end{align}
Thus, from \eqref{eq:condicioneta11} and \eqref{eq:condicioneta22}, we obtain
\begin{align*}
    \eta(x_1,x_1)\eta(x_2,x_2)=\eta(x_2,x_{12}x_1)\eta(x_2,x_2)=\eta(x_1,x_2x_{12})\eta(x_1,x_1)=0.
\end{align*}
If we suppose that $\eta(x_1,x_1)\neq 0$, then $\eta(x_2,x_2)=\eta(x_1,x_2x_{12})=0$.
Furthermore, if $y=x_{12}x_1$ and $z=x_1$, then we obtain from \eqref{eq:condicionsuficienteeta} that
\begin{align*}
\eta(x,x_{12})\eta(x_1,x_1)=-2q_{21}\eta(x_1,x_1)\eta(x,x_2x_1)+\eta(x_1,x_1)\eta(x,x_{12}),
\end{align*}
for any $x\in\Bq_{\bq}$.
Hence, we get that $\eta(x_2,x_{12}x_1)=0$.
Through the same calculations for $\eta(x_2,x_2)\neq 0$, we get that $\eta(x_1,x_1)=\eta(x_2,x_{12}x_1)=\eta(x_1,x_2x_{12})=0$. Thus, $\eta\in \C$.

Conversely, suppose that $\eta\in \C$. On one hand, if $\eta=\eta_1\xi_i^i$, $i\in\I_2$, then by \cite[Lemma 2.3.]{GaM}, we have the conclusion as $\eta$ clearly satisfies \eqref{eqn:GaM-V4}.

 Now, we show that the hypothesis of Lemma \ref{lem:conmutation} hold when
 \begin{align}\label{eqn:conmutation}
 \eta=\eta_{121}\xi_{121}^2+\eta_{212}\xi_{212}^1.
 \end{align}
 Let $x,y,z\in\mathbb{B}_\qb$, such that $\deg(y), \deg(z)$ are positive. In first place, note that if $\deg(y)+\deg(z)<4$, then we obtain zero in both sides of \eqref{eqn:conm1} and thus $u_x(y,z)=d_x(y,z)=0$. Now, if $\deg(y)+\deg(z)=4$, then \eqref{eqn:conm1} becomes $\eta(x,1)\eta(y,z)$ and so, $u_x(y,z)=0=d_x(y,z)$ when $x\neq 1$ and  $u_1(y,z)=\eta(y,z)=d_1(y,z)$. On the other hand, we explicitly show the most representative calculations below.

\begin{description}[leftmargin=*]
\item[$\deg(y)+\deg(z)=5$]

\

\begin{enumerate}[label=(\roman*),leftmargin=*]
\item $y=x_{12}$ and $z=x_{12}x_1$:
\begin{align*}
\eta(-,y_1z_1)\eta(y_2,z_2)=&2\eta(-,x_1)\eta(x_2,x_{12}x_1)-2q_{21}\eta(-,x_1)\eta(x_{12},x_2x_1)\\
&+\eta(-,x_1)\eta(x_{12},x_{12}),\\
\eta(y_1,z_1)\eta(-,y_2z_2)=&\eta(x_{12},x_{12})\eta(-,x_1).
\end{align*}
Here
\begin{itemize}
    \item $u_{x_1}(x_{12},x_{12}x_1)=\eta(x_{12},x_{12})=d_{x_1}(x_{12},x_{12}x_1)$.
    \item $u_x(x_{12},x_{12}x_1)=d_x(x_{12},x_{12}x_1)=0$ for $x\neq x_1$.
\end{itemize}
\item $y=x_{12}$ and $z=x_2x_{12}$:
\begin{align*}
\eta(-,y_1z_1)\eta(y_2,z_2)=&\eta(-,x_2)\eta(x_1,x_2x_{12})-q_{12}\eta(-,x_2)\eta(x_2x_1,x_{12}),\\
\eta(y_1,z_1)\eta(-,y_2z_2)=&-q_{21}\eta(x_2x_1,x_{12})\eta(-,x_2)+\eta(x_1,x_2x_{12})\eta(-,x_2)\\
&+2\eta(x_2x_1,x_2x_1)\eta(-,x_2).
\end{align*}
Then
\begin{itemize}
    \item $u_{x_2}(x_{12},x_2x_{12})=\eta(x_{12},x_{12})=d_{x_2}(x_{12},x_2x_{12})$.
    \item $u_x(x_{12},x_2x_{12})=d_x(x_{12},x_2x_{12})=0$ for $x\neq x_2$.
\end{itemize}
\item $y=x_2x_1$ and $z=x_{12}x_{1}$:
\begin{align*}
\eta(-,y_1z_1)\eta(y_2,z_2)=&q_{21}\eta(-,x_1)\eta(x_2,x_{12}x_1)-2q_{21}\eta(-,x_1)\eta(x_2x_1,x_2x_1)\\
&+\eta(-,x_1)\eta(x_2x_1,x_{12}),
\\
\eta(y_1,z_1)\eta(-,y_2z_2)=&\eta(x_2x_1,x_{12})\eta(-,x_1)+q_{12}\eta(x_2,x_{12}x_1)\eta(-,x_1).
\end{align*}
In this case $u_{x}(x_{12},x_2x_{12})=0=d_{x}(x_{12},x_2x_{12})$ for any $x\in \mathbb{B}_\qb$.
\item $y=x_{2}x_1$ and $z=x_{2}x_{12}$:
\begin{align*}
\eta(-,y_1z_1)\eta(y_2,z_2)=&\eta(-,x_2)\eta(x_1,x_2x_{12})-q_{12}\eta(-,x_2)\eta(x_2x_1,x_{12}),
\\
\eta(y_1,z_1)\eta(-,y_2z_2)=&-q_{21}\eta(x_2x_1,x_{12})\eta(-,x_2)+\eta(x_1,x_2x_{12})\eta(-,x_2)\\
&+2\eta(x_2x_1,x_2x_1)\eta(-,x_2).
\end{align*}
Then,
\begin{itemize}
    \item $u_{x_2}(x_{2}x_1,x_2x_{12})=\eta(x_{12},x_{12})=d_{x_2}(x_{2}x_1,x_2x_{12})$.
    \item $u_x(x_{2}x_1,x_2x_{12})=d_x(x_{2}x_1,x_2x_{12})=0$ for $x\neq x_2$.
\end{itemize}
\item $y=x_{2}$ and $z=x_{2}x_{12}x_1$:
\begin{align*}
\eta(x,y_1z_1)\eta(y_2,z_2)=&-\eta(x,x_2)\eta(x_2,x_{12}x_1),
\\
\eta(y_1,z_1)\eta(x,y_2z_2)=&-\eta(x_2,x_{12}x_1)\eta(x,x_2).
\end{align*}
Then,
\begin{itemize}
    \item $u_{x_2}(x_{2},x_2x_{12}x_1)=-\eta(x_{2},x_{12}x_1)=d_{x_2}(x_{2},x_2x_{12}x_1)$.
    \item $u_x(x_{2},x_2x_{12}x_1)=d_x(x_{2},x_2x_{12}x_1)=0$ for $x\neq x_2$.
\end{itemize}
\end{enumerate}

\end{description}

\begin{description}[leftmargin=*]
  \item[$\deg(y)+\deg(z)=6$]

  \

  \begin{enumerate}[label=(\roman*),leftmargin=*]
    \item $y=x_{12}$ and $z=x_2x_{12}x_1$:
\begin{align*}
\eta(-,y_1z_1)\eta(y_2,z_2)=&q_{21}\eta(-,x_2x_{1})\eta(x_{12},x_{12}),
\\
\eta(y_1,z_1)\eta(-,y_2z_2)=&\eta(x_{12},x_2x_1)\eta(-,x_2x_1)+q_{21}\eta(x_1,x_2x_{12})\eta(-,x_2x_1).
\end{align*}
Then,
\begin{itemize}
    \item $u_{x_2x_1}(x_{12},x_2x_{12}x_1)=q_{21}\eta(x_{12},x_{12})=d_{x_2x_1}(x_{12},x_2x_{12}x_1)$.
    \item $u_x(x_{12},x_2x_{12}x_1)=d_x(x_{12},x_2x_{12}x_1)=0$ for $x\neq x_2x_1$.
\end{itemize}
    \item $y=x_{2}x_1$ and $z=x_2x_{12}x_1$:
\begin{align*}
\eta(-,y_1z_1)\eta(y_2,z_2)=&\eta(-,x_2x_1)\eta(x_1,x_2x_{12})-q_{21}\eta(-,x_{12})\eta(x_2,x_{12}x_1)\\
&+q_{21}\eta(-,x_{12})\eta(x_2x_1,x_2x_1),
\\
\eta(y_1,z_1)\eta(-,y_2z_2)=&\eta(x_1,x_2x_{12})\eta(-,x_2x_1).
\end{align*}
Thus,
\begin{itemize}
    \item $u_{x_2x_1}(x_{2}x_1,x_2x_{12}x_1)=\eta(x_{1},x_2x_{12})=d_{x_2x_1}(x_{2}x_1,x_2x_{12}x_1)$.
    \item $u_x(x_{2}x_1,x_2x_{12}x_1)=d_x(x_{2}x_1,x_2x_{12}x_1)=0$ for $x\neq x_2x_1$.
\end{itemize}
    \item $y=x_{12}x_1$ and $z=x_2x_{12}$:
\begin{align*}
\eta(-,y_1z_1)\eta(y_2,z_2)=&-2q_{12}\eta(-,x_1x_2)\eta(x_2x_1,x_{12})-\eta(-,x_1x_2)\eta(x_{12},x_{12})\\
&+\eta(-,x_{12})\eta(x_1,x_2x_{12}),
\\
\eta(y_1,z_1)\eta(-,y_2z_2)=&-2q_{12}\eta(x_{12},x_2x_1)\eta(-,x_1x_2)-\eta(x_{12},x_{12})\eta(-,x_{1}x_2)\\
&+\eta(x_1,x_2x_{12})\eta(-,x_{12}).
\end{align*}
Then,
\begin{itemize}
    \item $u_{x_{12}}(x_{12}x_1,x_2x_{12})=-q_{12}\eta(x_2x_1,x_{12})=d_{x_{12}}(x_{12}x_1,x_2x_{12})$.
    \item $u_{x_{2}x_1}(x_{12}x_1,x_2x_{12})=-\eta(x_2x_1,x_{12})=d_{x_{2}x_1}(x_{12}x_1,x_2x_{12})$.
    \item$u_x(x_{12}x_1,x_2x_{12})=d_x(x_{12}x_1,x_2x_{12})=0$ for $x\neq x_{12},x_2x_1$.
\end{itemize}
    \item $y=x_{12}x_1=z$ or $y=x_2x_{12}=z$. In this cases, we obtain zero in both sides of \eqref{eqn:conm1} and so $u_x(y,z)=0=d_x(y,z)$.
  \end{enumerate}
\end{description}

\begin{description}[leftmargin=*]
  \item[$\deg(y)+\deg(z)=7$]

  \

  \begin{enumerate}[label=(\roman*),leftmargin=*]
    \item  $y=x_{12}x_1$ and $z=x_2x_{12}x_1$:
\begin{align*}
\eta(-,y_1z_1)\eta(y_2,z_2)=&-4\eta(-,x_1x_2x_1)\eta(x_2x_1,x_2x_1)+2q_{21}\eta(-,x_1x_{12})\eta(x_2x_1,x_2x_1)\\
&+2q_{21}\eta(-,x_1x_2x_1)\eta(x_2x_1,x_{12})-\eta(-,x_1x_{12})\eta(x_{12},x_2x_1)\\
&+2q_{21}\eta(-,x_1x_2x_1)\eta(x_{12},x_2x_1)-\eta(-,x_1x_2x_1)\eta(x_{12},x_{12})\\
&+\eta(-,x_{12}x_1)\eta(x_1,x_2x_{12})+\eta(-,x_{12}x_1)\eta(x_{12}x_1,x_2),
\\
\eta(y_1,z_1)\eta(-,y_2z_2)=&\eta(x_{12}x_1,x_2)\eta(-,x_{12}x_1)+\eta(x_1,x_2x_{12})\eta(-,x_{12}x_1)\\
&-2q_{12}\eta(x_{12},x_2x_1)\eta(-,x_1x_2x_1)-\eta(x_{12},x_{12})\eta(-,x_1x_2x_1)\\
&-\eta(x_{12},x_2x_1)\eta(-,x_1x_{12}).
\end{align*}
Here we have
\begin{itemize}
    \item $u_{x_{12}x_1}(x_{12}x_1,x_2x_{12}x_1)=2\eta(x_1,x_2x_{12})=d_{x_{12}x_1}(x_{12}x_1,x_2x_{12}x_1)$.
    \item $u_x(x_{12}x_1,x_2x_{12}x_1)=d_x(x_{12}x_1,x_2x_{12}x_1)=0$ for $x\neq x_{12}x_1$.
\end{itemize}
    \item  $y=x_2x_{12}$ and $z=x_2x_{12}x_1$:
\begin{align*}
\eta(-,y_1z_1)\eta(y_2,z_2)=&q_{21}\eta(-,x_{12}x_2)\eta(x_2,x_{12}x_1)-2\eta(-,x_2x_1x_2)\eta(x_2,x_{12}x_1)\\
&+q_{21}\eta(-,x_2x_{12})\eta(x_{12},x_2x_1)-\eta(-,x_2x_{12})\eta(x_2x_{12},x_1),
\\
\eta(y_1,z_1)\eta(-,y_2z_2)=&-\eta(x_2x_{12},x_1)\eta(-,x_2x_{12})-q_{21}\eta(x_{12},x_2x_1)\eta(-,x_2x_{12})\\
&+2\eta(x_2x_1,x_2x_1)\eta(-,x_2x_{12})-q_{21}\eta(x_{12},x_{2}x_1)\eta(-,x_2x_{12}).
\end{align*}
Then,
\begin{itemize}
    \item $u_{x_2x_{12}}(x_{2}x_{12},x_2x_{12}x_1)=-\eta(x_2,x_{12}x_1)=d_{x_2x_{12}}(x_2x_{12},x_2x_{12}x_1)$.
    \item $u_x(x_{2}x_{12},x_2x_{12}x_1)=d_x(x_{2}x_{12},x_2x_{12}x_1)=0$ for $x\neq x_2x_{12}$.
\end{itemize}
  \end{enumerate}
\end{description}

\begin{description}
  \item[$gr(y)+gr(z)=8$]

   \

\begin{itemize}
	\item[(i)]   We obtain zero in both sides of \eqref{eqn:conm1}, hence $u_x(y,z)=d_x(y,z)=0$.
	\end{itemize}
\end{description}

The other cases are very similar. Now, we have that $\eta$ verifies \eqref{eqn:conm1} and the hypothesis of the Lemma  \ref{lem:conmutation}; hence $\eta$ verifies \eqref{eqn:conm2} and $e^{\eta}$ is multiplicative.
\epf

\begin{lemma}\label{lem:alpha-sigma=eta}
Let $\sigma=\sigma_{\bs\lambda}$ be as in Theorem \ref{thm:sigma-a2} and $\eta\in \Z^2(\Bq_{\bq},\k)^H$ as in \eqref{eq:eta-abeliano}.

Assume that $e^{\eta}$ is a Hopf cocycle, cohomologous to $\sigma$. Then
\begin{align}\label{eq:eta-de-alpha}
  \lambda_1=\eta_1,\quad\lambda_2=\eta_2,\quad\eta_{121}+\eta_{212}=\lambda_{12}+2q_{12}\lambda_1\lambda_2.
\end{align}
Moreover, if $\alpha\rightharpoonup\sigma=e^{\eta}$,  $\alpha\in U(\Bq_{\bq}^{\ast})$, then $\alpha(1)=1$ and
\begin{align}\label{eq:alpha-necessary}
\begin{split}
\alpha(x_1)&=\alpha(x_2)=\alpha(x_{12})=\alpha(x_2x_1)=\alpha(x_{12}x_1)=\alpha(x_2x_{12})=0,\\
\alpha(x_2x_{12}x_1)&=\eta_{212}-\lambda_{12}.
\end{split}
\end{align}
%In particular, $\eta\in \Z^2(\Bq_{\bq},\k)^H$.
\end{lemma}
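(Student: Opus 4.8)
The plan is to pin down $\alpha$ degree by degree from the hypothesis $\alpha\ra\sigma=e^\eta$, reading off both the values \eqref{eq:alpha-necessary} and the relations \eqref{eq:eta-de-alpha} as consequences. First I would normalize: evaluating $\alpha\ra\sigma$ at $(1,1)$ gives $\alpha(1)=\alpha\ra\sigma(1,1)=e^\eta(1,1)=1$, since both cocycles are normalized. Next I would use that the gauge $\alpha$ may be taken $H$-linear — this is the content of Lemma \ref{lem:alpha-lineal} and Proposition \ref{pro:alpha-extendido} applied to the bosonized cocycles $\sigma\#\eps,e^\eta\#\eps\in Z^2_0(A)$, and it is precisely the hypothesis under which Remark \ref{rem:alpha-grado1} operates. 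Since every element of the PBW basis $\mathbb{B}_{\qb}$ is $H$-homogeneous, $H$-linearity forces $\alpha(b)=0$ whenever the character $\chi_b$ is nontrivial. I would then check, using $q_{11}=q_{22}=-1$ and $q_{12}^2=q_{21}^2=1$, that $x_1,x_2$ (characters $\chi_i$), $x_{12},x_2x_1$ ($\chi_1\chi_2$), $x_{12}x_1$ ($\chi_1^2\chi_2$) and $x_2x_{12}$ ($\chi_1\chi_2^2$) all carry a character that is already nontrivial on $\Gamma=\lg g_1,g_2\rg$, whereas $x_2x_{12}x_1$ carries $(\chi_1\chi_2)^2$, which is $\Gamma$-trivial. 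This proves the first line of \eqref{eq:alpha-necessary} and shows $\alpha=\eps$ on all of $\Bq_{\qb}$ in degrees $\le 3$; dually $\alpha^{-1}=\eps$ there as well, and expanding $\eps=\alpha\ast\alpha^{-1}$ on the degree-$4$ element yields $\alpha^{-1}(x_2x_{12}x_1)=-\alpha(x_2x_{12}x_1)$, because all proper components of its coproduct sit in positive degree $\le 3$.

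With $\alpha=\alpha^{-1}=\eps$ in degrees $\le 3$, I would feed $\tau=\sigma$ into Lemma \ref{lem:cuentaalpha}(1). Every factor $\alpha(g_i\cdot b_{(1)})=\eps(g_i)\alpha(b_{(1)})=\eps(b_{(1)})$ and $\alpha^{-1}(b_{(3)})=\eps(b_{(3)})$ collapses by counit, so for $b\in\mathbb{B}_{\qb}$ of degree $\le 3$ the formula reduces to the clean identity
\[
\eta(x_i,b)=\alpha\ra\sigma(x_i,b)=\sigma(x_i,b)+\alpha^{-1}(x_ib),
\]
where I also used $e^\eta(x_i,b)=\eta(x_i,b)$ (Remark \ref{rem:minimo}). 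Evaluating at $b=x_i$, where $x_ib=x_i^2=0$, gives $\eta_i=\sigma(x_i,x_i)=\lambda_i$, the first two relations of \eqref{eq:eta-de-alpha} (equivalently \eqref{eq:sigmaxx}).

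Finally I would evaluate the displayed identity at the two degree-$3$ basis elements. For $b=x_2x_{12}$ and $i=1$ I need the PBW reduction $x_1x_2x_{12}=-x_2x_{12}x_1$, which follows from $x_1x_{12}=-q_{12}x_{12}x_1$ together with $x_1x_2=x_{12}+q_{12}x_2x_1$, $x_{12}^2=0$ and $q_{12}^2=1$; hence $\alpha^{-1}(x_1x_2x_{12})=\alpha(x_2x_{12}x_1)$. Reading $\sigma(x_1,x_2x_{12})=\lambda_{12}$ off the table of Theorem \ref{thm:sigma-a2} and $\eta(x_1,x_2x_{12})=\eta_{212}$ gives $\alpha(x_2x_{12}x_1)=\eta_{212}-\lambda_{12}$, the second line of \eqref{eq:alpha-necessary}. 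For $b=x_{12}x_1$ and $i=2$ one has $x_2\cdot x_{12}x_1=x_2x_{12}x_1$, so $\alpha^{-1}(x_2x_{12}x_1)=-(\eta_{212}-\lambda_{12})$; reading $\sigma(x_2,x_{12}x_1)=2q_{12}\lambda_1\lambda_2$ and $\eta(x_2,x_{12}x_1)=\eta_{121}$ yields $\eta_{121}=2q_{12}\lambda_1\lambda_2+\lambda_{12}-\eta_{212}$, i.e. the last relation $\eta_{121}+\eta_{212}=\lambda_{12}+2q_{12}\lambda_1\lambda_2$ of \eqref{eq:eta-de-alpha}.

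The step I expect to be the main obstacle is the clean justification that $\alpha$ may be assumed $H$-linear, on which the entire low-degree vanishing rests; one must track the direction of the action carefully ($\alpha\ra\sigma=e^\eta$, opposite to the convention of Remark \ref{rem:alpha-grado1}, although being cohomologous is symmetric) and normalize $\alpha|_H=\eps$ before transporting $H$-linearity from $A$ back to $\Bq_{\qb}$. Everything else is bookkeeping: the degree-$4$ PBW reduction and the handful of entries of the table of Theorem \ref{thm:sigma-a2} that are exactly the ones making all cross terms vanish.
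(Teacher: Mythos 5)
Your argument is correct and, in its load-bearing parts, coincides with the paper's proof: the same normalization $\alpha(1)=1$, the same collapse of Lemma \ref{lem:cuentaalpha}(1) to the identity $\eta(x_i,b)=\sigma(x_i,b)+\alpha^{-1}(x_ib)$ once $\alpha$ is known to vanish in degrees $\leq 3$ (using \eqref{eqn:ealaeta=eta}), and the same two evaluations at $(x_1,x_2x_{12})$ and $(x_2,x_{12}x_1)$, with the PBW reduction $x_1x_2x_{12}=-x_2x_{12}x_1$ producing exactly the sign discrepancy between the two resulting expressions for $\alpha^{-1}(x_2x_{12}x_1)$ that yields both $\alpha(x_2x_{12}x_1)=\eta_{212}-\lambda_{12}$ and the relation $\eta_{121}+\eta_{212}=\lambda_{12}+2q_{12}\lambda_1\lambda_2$. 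Where you genuinely depart from the paper is the low-degree vanishing of $\alpha$: the paper obtains $\alpha(x_i)=0$ from Remark \ref{rem:alpha-grado1}, then $\alpha(x_ix_j)=0$ from \eqref{eq:condicion-alpha-eta-sigma}, and finally $\alpha(x_ix_jx_i)=0$ from the evaluations $\alpha\ra\sigma(x_i,x_jx_i)=\alpha\ra\sigma(x_i,x_{12})=0$; you instead kill all of degrees $1,2,3$ in one stroke by noting that each PBW element there is $\Gamma$-homogeneous of nontrivial character ($\chi_i$, $\chi_1\chi_2$, $\chi_1^2\chi_2$, $\chi_1\chi_2^2$), while $x_2x_{12}x_1$ carries $(\chi_1\chi_2)^2$, which is trivial on $\Gamma$. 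This is cleaner, your character computations are right under \eqref{eqn:q12}, and it explains conceptually why $\alpha$ can only survive on the top element.

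The one point that deserves sharpening is the $H$-linearity of $\alpha$, which you correctly identify as the crux but resolve with a citation that does not literally apply: Lemma \ref{lem:alpha-lineal} and Proposition \ref{pro:alpha-extendido} concern gauges defined on the bosonization $A=\Bq_{\qb}\#H$, not a gauge given only on $\Bq_{\qb}$, and an arbitrary $\alpha\in U(\Bq_{\qb}^{\ast})$ intertwining two $H$-linear braided cocycles need \emph{not} be $H$-linear. (Already in rank one, $B=\k[x]/(x^2)$ with $c(x\ot x)=-x\ot x$: a functional with $\alpha(1)=1$, $\alpha(x)=a\neq 0$ sends $\eps$ to the cocycle with value $a^2$ at $(x,x)$, even though the corresponding cleft objects are non-isomorphic; so such $\alpha$ cannot be replaced by an $H$-linear one, and the analogue of \eqref{eq:eta-de-alpha} fails for it.) The correct reading, which makes both your proof and the paper's complete, is that ``cohomologous'' in the statement is cohomology in $Z^2_0(A)$ — the notion actually relevant for purity and the one used in Theorem \ref{thm:sigma-pure} — whereupon Proposition \ref{pro:alpha-extendido}(b) together with Lemma \ref{lem:alpha-lineal} (and the reduction to $\alpha_{|H}=\eps$) produces the $H$-linear gauge on $\Bq_{\qb}$ that your character argument, and Remark \ref{rem:alpha-grado1} in the paper's version, both require. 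Since the paper's own proof leans silently on the same hypothesis, your write-up, which at least makes the dependence explicit, is if anything the more transparent of the two.
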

\pf
Suppose that there exist $\alpha\in U(\Bq_{\bq}^\ast)$ such that $\alpha\rightharpoonup \sigma=e^\eta$. In first place, we may define $\alpha(1)=1$ and then Remark \ref{rem:alpha-grado1} gives us $\lambda_1=\eta_1$ and $\lambda_2=\eta_2$. In particular, it follows that
\begin{itemize}
    \item $\alpha(x_1)=\alpha(x_2)=0$.
    \item $\alpha(x_ix_j)=0$, for all $i,j\in \I_2$ with $i\neq j$.
\end{itemize}
On the other hand, from the relations
\begin{align*}
\alpha\rightharpoonup \sigma(x_i,x_jx_i)&=e^{\eta}(x_i,x_jx_i)=0, & \alpha\rightharpoonup \sigma(x_i,x_{12})&=e^{\eta}(x_i,x_{12})=0,
\end{align*}
we obtain that $\alpha^{-1}(x_ix_jx_i)=0$, so $\alpha(x_ix_jx_i)=0$. Now we compute $\alpha(x_2x_{12}x_1)$, expanding $\alpha\rightharpoonup \sigma(x_1,x_2x_{12})=e^\eta(x_1,x_2x_{12})$ and $\alpha\rightharpoonup \sigma(x_2,x_{12}x_1)=e^\eta(x_2,x_{12}x_1)$. For the first,
\begin{align*}
\alpha\rightharpoonup \sigma(x_2,x_{12}x_1)&=\eta_{121}\\
\sigma(x_2,x_{12}x_1)+\alpha^{-1}(x_2x_{12}x_1)&=\eta_{121}\\
\alpha^{-1}(x_2x_{12}x_1)&=\eta_{121}-2q_{12}\lambda_1\lambda_2.
\end{align*}
For the second, we obtain:
\begin{align*}
\alpha\rightharpoonup \sigma(x_1,x_{2}x_{12})&=\eta_{212}\\
\sigma(x_1,x_{2}x_{12})-\alpha^{-1}(x_2x_{12}x_1)&=\eta_{212}\\
\alpha^{-1}(x_2x_{12}x_1)&=\lambda_{12}-\eta_{212}.
\end{align*}
Hence, as $\alpha^{-1}(x_2x_{12}x_1)$ exists, then $\eta_{121}+\eta_{212}=\lambda_{12}+2q_{12}\lambda_1\lambda_2$.
%
%Finally, recall \eqref{eqn:restictions-lambda} and let $\lambda_{12}'=\lambda_{12}+2q_{12}\lambda_1\lambda_2$. Observe that, again,  $\lambda_{12}'\neq 0$ only if $\chi_1^2\chi_2^2=\eps$.
%Assume $\eta$ satisfies \eqref{eq:eta-de-alpha}, that is
%\[
%\eta=\lambda_1 \xi_1^1+\lambda_2 \xi_2^2 + \eta_{121}\xi_{121}^2 + (\lambda_{12}'-\eta_{121})\xi_{212}^1.
%\]
%The only case of interest is $\eta_{121}\eta_{212}\neq 0$; all other combinations give $\eta\in \Z^2(B,\k)^H$ automatically, by evaluating in a pair suitable elements of the basis. For this last case, we get from the $H$-linearity of $\alpha$ as in  \eqref{eq:alpha-necessary} that
%\[
%\eps(h)(\eta_{212}-\lambda_{12})=\chi_1^2\chi_2^2(h)(\eta_{212}-\lambda_{12}), \quad h\in H.
%\]
%Thus, $\eps(h)\eta_{212}=\chi_1^2\chi_2^2(h)\eta_{212}$ and therefore
%\[
%\eps(h)\eta_{121}=\eps(h)(\lambda_{12}'-\eta_{212})=\chi_1^2\chi_2^2(h)(\lambda_{12}'-\eta_{212})=\chi_1^2\chi_2^2(h)\eta_{121}
%\]
%which shows that $\eta\in \Z^2(\Bq_{\bq},\k)^H$.
\epf

Let $\overline{\C}\subset\Z^2(\Bq_{\bq},\k)^\Gamma$ be the set defined by
\begin{align*}
\overline{\C}\coloneqq\{\eta_i\xi_i^i+\eta_{121}\xi_{121}^2-\eta_{121}\xi_{212}^1,\eta_{121}\xi_{121}^2+\eta_{212}\xi_{212}^1 :\eta_i,\eta_{121},\eta_{212}\in\k,i\in\I_2\}.
\end{align*}
Notice that $\eta\in \overline{\C}$ if and only if
\begin{align}\label{eq:eta-de-hopf}
\eta_1\eta_2=\eta_1(\eta_{121}+\eta_{212})=\eta_2(\eta_{121}+\eta_{212})=0.
\end{align}

\begin{remark}\label{rem:commutation-not-necessary}
In particular, $\C\subset \overline{\C}$ and this inclusion is strict. However, they coincide up to cobordisms, see \eqref{eqn:cobordism}:
\[
\overline{\C}=\C+\k\{\xi_{121}^2-\xi_{212}^1\}.
\]
\end{remark}
%\begin{multline*}
%\overline{\C}\coloneqq\{\eta_1\xi_1^1+\eta_{121}\xi_{121}^2-\eta_{121}\xi_{212}^1,\eta_2\xi_2^2+\eta_{121}\xi_{121}^2-\eta_{121}\xi_{212}^1,\\ \eta_{121}\xi_{121}^2+\eta_{212}\xi_{212}^1 :\eta_1,\eta_2,\eta_{121},\eta_{212}\in\k\}.
%\end{multline*}

\medbreak

We have the following characterization of Hopf cocycles that are an exponential map of a Hochschild cocycle.
%\begin{remark}
%Observe that if $e^{\eta}$ is a Hopf cocycle in Lemma \ref{lem:alpha-sigma=eta}, then $\alpha'$ exist if and only if it is defined by
%\begin{align}\label{eq:alpha-abeliano}
%\alpha(1)=1,\quad  \alpha(x_1)=\alpha(x_2)=\alpha(x_{12})&=\alpha(x_2x_1)=\alpha(x_{12}x_1)=\alpha(x_2x_{12})=0,\\\nonumber
%&\alpha(x_2x_{12}x_1)=-\eta_{121}.
%\end{align}
%Recall Corollary \ref{cor:primeralinea}.
%\end{remark}

\begin{lemma}\label{lem:converse}
	$e^\eta$ is a Hopf 2-cocycle if and only if $\eta \in \overline{\C}$. 
\end{lemma}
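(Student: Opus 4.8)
The plan is to prove both implications, pivoting on the structural fact recorded in Remark \ref{rem:commutation-not-necessary}, namely $\overline{\C}=\C+\k\{\xi_{121}^2-\xi_{212}^1\}$, together with the cobordism relation \eqref{eqn:cobordism}. As always I may assume $\eta(1,1)=0$ by Lemma \ref{lem:eta11} and write $\eta=\eta_1\xi_1^1+\eta_2\xi_2^2+\eta_{121}\xi_{121}^2+\eta_{212}\xi_{212}^1$ as in \eqref{eq:eta-abeliano}. Recall that membership in $\overline{\C}$ is precisely the system \eqref{eq:eta-de-hopf}, so the target of the $\Rightarrow$ direction is to extract $\eta_1\eta_2=0$ and $\eta_1(\eta_{121}+\eta_{212})=\eta_2(\eta_{121}+\eta_{212})=0$ from the Hopf cocycle condition.

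For the implication $\eta\in\overline{\C}\Rightarrow e^\eta\in Z^2(\Bq_{\bq})$, I would first split $\eta=\eta_0+c\,\delta$ with $\eta_0\in\C$, $c\in\k$ and $\delta\coloneqq\xi_{121}^2-\xi_{212}^1$, which is possible by Remark \ref{rem:commutation-not-necessary}. Proposition \ref{pro:exponencial-abeliano} already gives that $e^{\eta_0}$ is a Hopf cocycle. Since $\delta$ is the cobordism $\delta(b,b')=f(bb')$ of \eqref{eqn:cobordism}, with $f$ supported on $x_2x_{12}x_1$, we have $f\ast f=0$, so $\alpha\coloneqq\eps-c f$ is convolution invertible with $\alpha^{-1}=\eps+c f$. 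The key claim is then $e^\eta=\alpha\rightharpoonup e^{\eta_0}$; because the $U(\Bq_{\bq}^\ast)$-action preserves $Z^2(\Bq_{\bq})$, this immediately yields $e^\eta\in Z^2(\Bq_{\bq})$. To prove the claim I would invoke Corollary \ref{cor:primeralinea}(b): it suffices to check equality on pairs $(x_i,b)$, $i\in\I_2$, $b\in\mathbb{B}_{\qb}$. On the left, Remark \ref{rem:minimo} gives $e^\eta(x_i,b)=\eta(x_i,b)=e^{\eta_0}(x_i,b)+c\,f(x_ib)$; on the right, Lemma \ref{lem:cuentaalpha}(a) computes $\alpha\rightharpoonup e^{\eta_0}(x_i,b)$, and since $\alpha$ is supported in degrees $0$ and $4$ only the terms in which $x_ib_{(2)}$ reaches top degree survive, reproducing exactly the correction $c\,f(x_ib)$. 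This is a finite and routine verification (the only nontrivial corrections occur for $b$ of degree $3$, where $x_ib$ can be the top monomial).

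For the converse $e^\eta\in Z^2(\Bq_{\bq})\Rightarrow\eta\in\overline{\C}$, I would compute the table of $e^\eta$ in two ways and force them to agree. On one hand, every entry $\sigma(a,b)$ of a Hopf cocycle $\sigma=e^\eta$ is determined, through the decomposition of Lemma \ref{lem:descomposi} and Example \ref{exa:descompos-lowdregree}, by the values $\sigma(x_i,b)=\eta(x_i,b)$; on the other hand the same entry is computed directly from the exponential via Remark \ref{rem:minimo}. The two must coincide. For instance, writing $x_{12}x_1=x_1x_2x_1$ and applying \eqref{eqn:hop-cocycle-decomp-recursive} one gets, using $q_{21}=-q_{12}$, that the decomposition value is $\sigma(x_{12}x_1,x_2)=\eta_{212}+2q_{12}\eta_1\eta_2$, whereas the exponential gives $e^\eta(x_{12}x_1,x_2)=\eta_{212}$; equating them forces $\eta_1\eta_2=0$ for both signs $q_{12}=\pm1$. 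Carrying out the same comparison on the degree-$(3,3)$ entries $(x_{12}x_1,x_{12}x_1)$ and $(x_2x_{12},x_2x_{12})$ produces the remaining relations $\eta_1(\eta_{121}+\eta_{212})=0$ and $\eta_2(\eta_{121}+\eta_{212})=0$, i.e. \eqref{eq:eta-de-hopf}.

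I expect the converse to be the main obstacle. Unlike Proposition \ref{pro:exponencial-abeliano}, one cannot invoke the strong commutation conditions \eqref{eqn:conm1}–\eqref{eqn:conm2}; the identities must instead be read off the genuine (weaker) Hopf cocycle equation, which couples many entries and requires the explicit higher-degree exponential values and the cumbersome degree-$3$ and degree-$4$ decomposition formulas. A secondary point demanding care is that the coboundary direction $\xi_{121}^2-\xi_{212}^1$ must drop out of every obstruction, consistently with the strict inclusion $\C\subsetneq\overline{\C}$: concretely, one must verify that each relation involves $\eta_{121}$ and $\eta_{212}$ only through the combination $\eta_{121}+\eta_{212}$, which is exactly why $\overline{\C}$, and not $\C$, is the correct answer.
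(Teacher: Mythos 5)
Your ``only if'' half (from $e^\eta\in Z^2(\Bq_{\qb})$ to \eqref{eq:eta-de-hopf}) is correct and is essentially the paper's own method: compute entries of $e^\eta$ once through the decomposition of Lemma \ref{lem:descomposi} and once through the exponential definition, and equate. You use the entry $(x_{12}x_1,x_2)$ where the paper uses $(x_2x_1,x_{12})$, and then the same two diagonal degree-$(3,3)$ entries; the values you quote are right, so this half stands.

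The gap is in the ``if'' half, at the single step carrying all the weight: you justify the key claim $e^\eta=\alpha\rightharpoonup e^{\eta_0}$ by invoking Corollary \ref{cor:primeralinea}(b) and checking agreement only on pairs $(x_i,b)$. That corollary is stated---and proved, via Lemma \ref{lem:descomposi}---for \emph{two Hopf cocycles} $\sigma,\sigma'$: it is precisely the Hopf cocycle equation for $\sigma'$ that forces every value $\sigma'(a,b)$ to equal the decomposition polynomial in its first-line values. Here $\sigma'=e^\eta$ is the very map whose membership in $Z^2(\Bq_{\qb})$ you are trying to establish, so you may not assume its higher-degree values obey the decomposition; the argument is circular. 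And the circularity is not a formality: your own ``only if'' computation exhibits Hochschild cocycles $\eta$ for which the exponential value and the decomposition value of $e^\eta(x_{12}x_1,x_2)$ differ (by $2q_{12}\eta_1\eta_2$), so exponentials of Hochschild cocycles do \emph{not} in general satisfy the conclusion of Lemma \ref{lem:descomposi}. First-line agreement only tells you that $\alpha\rightharpoonup e^{\eta_0}$ is the unique \emph{Hopf cocycle} with those first-line values; it says nothing about $e^\eta(a,b)$ for $\deg a\geq 2$. To close the gap you must do what the paper's proof does: compute the entire table of $e^\eta$ directly from the exponential (degrees $\leq 4$ reduce to $\eta$ by Remark \ref{rem:minimo}; the degree $5$, $6$, $7$ entries vanish by degree arguments together with $\eta_1\eta_2=0$; and the top entry needs the explicit computation $\eta^{\ast 2}(x_2x_{12}x_1,x_2x_{12}x_1)=2\eta_{212}\eta_{121}$), and then compare it entry by entry with the table of the genuine Hopf cocycle $\alpha\rightharpoonup e^{\eta_0}$, which \emph{is} computable by decomposition. (Alternatively one could try to prove $\alpha\rightharpoonup e^{\eta_0}=e^{\eta_0+c\,\delta}$ as a convolution identity, writing $\alpha=e^{-cf}$ and verifying that $f\ot\eps+\eps\ot f$, $f\circ m$ and $\eta_0$ pairwise $\ast$-commute; but that verification is also absent from your proposal.) Your choice $\alpha=\eps-cf$ is the correct unit---it agrees with the $\alpha$ of \eqref{eq:alpha-necessary} that the paper uses for the same purpose, with $\sigma_{\eta_1,0,0}$ in place of $e^{\eta_0}$---so the claim itself is true; what is missing is a non-circular proof of it.
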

\pf
Let $\eta\in \Z^2(\Bq_{\bq},\k)^\Gamma$ be a Hochschild cocycle as in \eqref{eq:eta-abeliano}.
Assume $e^\eta$ is a Hopf 2-cocycle. We compute the values of $e^\eta$ in two ways. Namely, via the decomposition formulas in Example \ref{exa:descompos-lowdregree} and via the definition of an exponential map. We obtain a list of equations that imply that $\eta \in \overline{\C}$. We show below some of these calculations.
\begin{description}[leftmargin=*]
\item[$e^{\eta}(x_2x_1,x_{12})$] Using the decomposition formula, we obtain
\[
e^{\eta}(x_2x_1,x_{12})=2\eta_1\eta_2-q_{12}\eta_{121},
\] but the definition of exponential map gives
\[
e^{\eta}(x_2x_1,x_{12})=\eta_1\eta_2-q_{12}\eta_{121}.
\]
Thus, $\eta_1\eta_2=0$. From now on, we suppose that this relations hold.
\item[$e^{\eta}(x_{12}x_1,x_{12}x_1)$] We obtain:
\[
e^{\eta}(x_{12}x_1,x_{12}x_1)=q_{12}\eta_1\eta_{121}+q_{12}\eta_1\eta_{212},  \qquad
e^{\eta}(x_{12}x_1,x_{12}x_1)=0.
\]
So, we get the relation $\eta_1(\eta_{121}+\eta_{212})=0$.
\item[$e^{\eta}(x_2x_{12},x_2x_{12})$] Here we get
\[
\sigma(x_2x_{12},x_2x_{12})=-q_{12}\eta_2\eta_{121}-q_{12}\eta_2\eta_{212},
\qquad
e^{\eta}(x_2x_{12},x_2x_{12})=0.
\]
Hence $\eta_2(\eta_{121}+\eta_{212})=0$.
\end{description}
Therefore, we see that $\eta\in\overline{\C}$.

Conversely, suppose that $\eta\in\overline{\C}$. If $\eta=\eta_{121}\xi_{121}^2+\eta_{212}\xi_{212}^1$, then $e^{\eta}$ is a Hopf cocycle by Proposition \ref{pro:exponencial-abeliano}. Now, suppose that $\eta=\eta_1\xi_1^1+\eta_{121}\xi_{121}^2-\eta_{121}\xi_{212}^1$. The case $\eta=\eta_2\xi_2^2+\eta_{121}\xi_{121}^2-\eta_{121}\xi_{212}^1$ is analogous.

We claim that, by definition, $e^{\eta}$ is given by the table
\begin{center}
	\begin{table}[h]
		\resizebox{10cm}{!}
		 {\begin{tabular}{|c|c|c|c|c|c|c|c|c|}
				\hline
				$e^\eta$ &  $x_1$ & $x_2$ & $x_{12}$ & $x_2x_1$  &  $x_2x_{12}$ & $x_{12}x_1$ & $x_2x_{12}x_1$ \\
				\hline
				$x_1$   & $\eta_1$ & $0$ & $0$ & $0$ & $-\eta_{121}$ & $0$ & $0$ \\
				\hline
				$x_2$  & $0$ & $0$ & $0$ & $0$  &  $0$ & $\eta_{121}$& $0$\\
				\hline
				$x_{12}$  & $0$ & $0$ & $0$ & $-q_{12}\eta_{121}$ &  $0$ & $0$ & $0$ \\
				\hline
				$x_2x_1$  & $0$ & $0$ & $-q_{12}\eta_{121}$ & $\eta_{121}$  &  $0$ & $0$ & $0$   \\
				\hline
				$x_2x_{12}$  & $\eta_{121}$ & $0$ & $0$ & $0$& $0$ & $0$ & $0$  \\
				\hline
				$x_{12}x_1$  & $0$ & $-\eta_{121}$ & $0$ & $0$ &  $0$ & $0$ & $0$  \\
				\hline
				$x_2x_{12}x_1$ & $0$ & $0$ & $0$ & $0$  &  $0$ & $0$ & $-\eta_{121}^2$ \\
				\hline
		\end{tabular}}
	\end{table}
\end{center}
Indeed, by Remark \ref{rem:minimo} and a degree argument, plus considering that $\eta_1\eta_2=0$, we see that $e^{\eta}(x,y)=\eta(x,y)$ for $\deg x+\deg y\leq4$.

On the other hand, if $\deg x+\deg y=5$, then
\begin{align*}
  e^\eta(x,y)=\eta(x,y)+\eta(x_{\underline{{(1)}}},y_{\underline{{(1)}}})\eta(x_{\underline{{(2)}}},y_{\underline{{(2)}}}),
\end{align*}
so $e^\eta(x,y)=0$ in this case.
Now, when $\deg x+\deg y=6$ and $\deg x\neq\deg y$, it follows that for each of the four possible cases, there are scalars $(a_{ij})_{i,j}$, depending on $x,y$, such that the following relation holds:
\begin{align*}
  e^\eta(x,y)=\eta(x,y)+\sum_{i,j\in\I_2,i\leq j}a_{ij}\eta(x_i,x_i)\eta(x_j,x_{ij}x_j).
\end{align*}
Thus $e^\eta(x,y)=0$ as $\eta(x,y)=0$ and $\eta(x_j,x_{ij}x_j)=q_{ij}\eta(x_j,x_jx_{ij})=0$.
If $\deg x=\deg y=3$, then it follows that $e^\eta(x,y)=0$.

If $\deg x+\deg y=7$, then we see that $e^\eta(x,y)=0$. Finally, if $x=y=x_2x_{12}x_1$, then it is easy to see that $\eta^{\ast k}(x,y)=0$ for $k\neq 2$ and
\begin{align*}
\eta^{\ast 2}(x,y)=&\, -4\eta(x_2x_1,x_2x_1)^2 +4q_{21}\eta(x_2x_1,x_{12})\eta(x_2x_1,x_2x_1)\\
&+\eta(x_1,x_2x_{12})\eta(x_2x_{12},x_1)+2q_{21}\eta(x_{12},x_2x_1)\eta(x_2x_1,x_2x_1)\\
&-\eta(x_{12},x_{12})\eta(x_2x_1,x_2x_1)-\eta(x_{12},x_2x_1)\eta(x_2x_1,x_{12})\\
&+2\eta(x_{12}x_1,x_2)\eta(x_2,x_{12}x_1)+2q_{21}\eta(x_2x_1,x_2x_1)\eta(x_{12},x_2x_1)\\
&-\eta(x_2x_1,x_{12})\eta(x_{12},x_2x_1)-\eta(x_2x_1,x_2x_1)\eta(x_{12},x_{12})\\
&+\eta(x_2x_{12},x_1)\eta(x_1,x_2x_{12})\\
=&\, -4\eta_{121}^2+4\eta_{121}^2+\eta_{212}\eta_{121}+2\eta_{121}^2-\eta_{121}^2-\eta_{212}\eta_{121}-\eta_{121}^2\\
&+2\eta_{212}\eta_{121}+2\eta_{121}^2-\eta_{121}^2-\eta_{121}^2-\eta_{212}\eta_{121}+\eta_{121}\eta_{212}\\
=&\, 2\eta_{212}\eta_{121}.
\end{align*}
Hence, recalling that $\eta_{212}=-\eta_{121}$, we obtain that $e^\eta(x,y)=\frac{1}{2}\eta^{\ast 2}(x,y)=-\eta_{121}^2$.

Now, let $\sigma=\sigma_{\eta_1,0,0}$ as in Theorem \ref{thm:sigma-a2} and $\alpha\in U(\Bq_{\bq}^{\ast})$ as in \eqref{eq:alpha-necessary}.
%\begin{align*}
%\alpha'(1)=1,\quad \alpha'(x)=0, \text{for $0<\deg x\leq 3$},\quad\alpha'(x_2x_{12}x_1)=-\eta_{121}.
%\end{align*}
Then $\alpha\rightharpoonup\sigma_{\eta_1,0,0}(x_i,b)=e^\eta(x_i,b)$, for any $i\in\I_2$, $b\in \mathbb{B}_\bq$, see Lemma \ref{lem:alpha-sigma=eta}. Next, we apply the decomposition formulas to compute all values of the Hopf cocycle $\alpha\rightharpoonup\sigma_{\eta_1,0,0}$ and we obtain that $\alpha\rightharpoonup\sigma=e^{\eta}$. Therefore, $e^{\eta}$ is a Hopf cocycle.
\epf

Let $\ex(\C), \ex(\overline{\C})\subset Z^2(\Bq_{\bq})$ be the subsets of Hopf cocycles that arise as exponential maps of elements of $\C$ and $\overline{\C}$ respectively. Next, we show that it is enough to consider exponential cocycles in $\ex(\C)$.

\begin{lemma}\label{lem:c-eta-enough}
Let $\sigma_{\bs\lambda}\in Z^2(\Bq_{\bq})$ be as in Theorem \ref{thm:sigma-a2}. Then $\sigma$ is cohomologous to a cocycle in $\ex(\C)$ if and only $\sigma$ is cohomologous to a cocycle in $\ex(\overline{\C})$.
\end{lemma}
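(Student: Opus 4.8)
The plan is to dispatch one implication for free and reduce the other to the case analysis already implicit in the membership criteria \eqref{eq:eta-Ceta} and \eqref{eq:eta-de-hopf}. Since $\C\subseteq\overline{\C}$ we have $\ex(\C)\subseteq\ex(\overline{\C})$, so if $\sigma$ is cohomologous to a cocycle in $\ex(\C)$ it is \emph{a fortiori} cohomologous to one in $\ex(\overline{\C})$; this settles the ``only if'' direction with no work. All the content is in the ``if'' direction, which I would prove by pinning the Hopf cohomology class of an exponential via Lemma \ref{lem:alpha-sigma=eta} and observing that the extra direction of $\overline{\C}$ over $\C$ does not move that class.

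Concretely, I would start from $\sigma=\sigma_{\bs\lambda}\sim e^\eta$ with $\eta=\eta_1\xi_1^1+\eta_2\xi_2^2+\eta_{121}\xi_{121}^2+\eta_{212}\xi_{212}^1\in\overline{\C}$, so that $\eta_1\eta_2=\eta_1(\eta_{121}+\eta_{212})=\eta_2(\eta_{121}+\eta_{212})=0$ by \eqref{eq:eta-de-hopf}. Note that $e^\eta$ is a Hopf cocycle by Lemma \ref{lem:converse}, so the hypotheses of Lemma \ref{lem:alpha-sigma=eta} are met. If $\eta_1=\eta_2=0$ then $\eta=\eta_{121}\xi_{121}^2+\eta_{212}\xi_{212}^1$ already lies in $\C$ by \eqref{eq:eta-Ceta} and there is nothing to do. Otherwise I may assume $\eta_1\neq 0$, the case $\eta_2\neq 0$ being symmetric (with $\eta_2\xi_2^2$ in place of $\eta_1\xi_1^1$). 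Then \eqref{eq:eta-de-hopf} forces $\eta_2=0$ and $\eta_{121}+\eta_{212}=0$, and Lemma \ref{lem:alpha-sigma=eta} gives $\lambda_1=\eta_1$, $\lambda_2=0$ and $\lambda_{12}+2q_{12}\lambda_1\lambda_2=\eta_{121}+\eta_{212}=0$, whence $\lambda_{12}=0$. Thus $\sigma=\sigma_{(\eta_1,0,0)}$.

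To conclude I would take $\eta'=\eta_1\xi_1^1\in\C$ and invoke the computation in the proof of Lemma \ref{lem:converse}, which exhibits, for each $c\in\k$, a convolution unit $\alpha$ with $\alpha\rightharpoonup\sigma_{(\eta_1,0,0)}=e^{\eta_1\xi_1^1+c(\xi_{121}^2-\xi_{212}^1)}$; taking $c=0$ yields $e^{\eta'}\sim\sigma_{(\eta_1,0,0)}=\sigma$, so $\sigma$ is cohomologous to $e^{\eta'}\in\ex(\C)$, as desired. The step I expect to carry the weight is conceptual rather than computational: one must notice, via Lemma \ref{lem:alpha-sigma=eta}, that $\eta_{121}$ and $\eta_{212}$ affect the triple $\bs\lambda$ only through the sum $\eta_{121}+\eta_{212}$, which is exactly the quantity annihilated along the cobordism direction $\xi_{121}^2-\xi_{212}^1$ spanning $\overline{\C}/\C$ (Remark \ref{rem:commutation-not-necessary}). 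This is why replacing $\eta\in\overline{\C}\setminus\C$ by its $\C$-component leaves the Hopf cohomology class of the exponential unchanged, making $\ex(\overline{\C})$ redundant up to cohomology.
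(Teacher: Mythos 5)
Your proof is correct, and it reaches the same endpoint as the paper's proof --- reducing the nontrivial case to the identity $\sigma_{(\eta_1,0,0)}=e^{\eta_1\xi_1^1}$ --- but the middle step is organized genuinely differently. The paper never determines the parameter $\bs\lambda$ of $\sigma$: in the case $\eta=\eta_1\xi_1^1+\eta_{121}(\xi_{121}^2-\xi_{212}^1)$ it constructs an explicit convolution unit $\alpha'$ (vanishing in degrees $1$ through $3$, with $\alpha'(x_2x_{12}x_1)=\eta_{121}$) such that $\alpha'\ra e^{\eta}=\sigma_{\eta_1,0,0}=e^{\eta_1\xi_1^1}$, and then composes equivalences, $\sigma\sim_{\beta}e^{\eta_1\xi_1^1}$ with $\beta=\alpha'\ast\alpha$; this isolates a fact purely about exponentials, namely that $e^\eta$ is cohomologous to the exponential of the $\C$-component of $\eta$, independently of which $\sigma_{\bs\lambda}$ one started from --- the cohomological content of Remark \ref{rem:commutation-not-necessary}, that $\overline{\C}$ exceeds $\C$ only along the cobordism direction $\xi_{121}^2-\xi_{212}^1$. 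You instead feed the assumed equivalence $\sigma\sim e^\eta$ into Lemma \ref{lem:alpha-sigma=eta} to pin down $\bs\lambda=(\eta_1,0,0)$, so that $\sigma$ itself equals $\sigma_{(\eta_1,0,0)}=e^{\eta_1\xi_1^1}\in\ex(\C)$, invoking the $c=0$ instance of the computation in Lemma \ref{lem:converse}, where the transporting unit is $\eps$. What your route buys: you avoid constructing and verifying the unit $\alpha'$ for general $\eta_{121}$, and you get the slightly sharper conclusion that $\sigma$ is literally equal, not merely cohomologous, to an exponential of an element of $\C$. What the paper's route buys: the statement $e^\eta\sim e^{\eta'}$, $\eta'\in\C$, holds as a self-contained comparison of exponentials and does not consume the hypothesis that $\sigma$ comes from the classification table. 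Since both arguments rest on the same two inputs (Lemmas \ref{lem:alpha-sigma=eta} and \ref{lem:converse}), the difference is one of organization rather than machinery, but your reorganization is sound and complete, including the trivial direction via $\ex(\C)\subseteq\ex(\overline{\C})$ and the symmetric case $\eta_2\neq 0$.
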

\pf
One implication is trivial. Assume $\sigma\sim_\alpha e^\eta$, $\eta\in \overline{\C}$. That is, $\alpha\ra\sigma=e^\eta$.

If $\eta=\eta_{121}\xi_{121}^2+\eta_{212}\xi_{212}^1$, then $\eta\in \C$  and we can take $\alpha=\eps$.

Fix now $\eta=\eta_1\xi_1^1+\eta_{121}\xi_{121}^2-\eta_{121}\xi_{212}^1$.
By Lemma \ref{lem:alpha-sigma=eta}, (notice that $\eta_{212}=-\eta_{121}$) if we define $\alpha'$ by
\begin{align*}
\alpha'(1)=1,\quad \alpha'(x)=0, \text{ for $0<\deg x\leq 3$},\quad\alpha'(x_2x_{12}x_1)=\eta_{121},
\end{align*}
then $\alpha'\ra e^\eta=\sigma_{\eta_1,0,0}$. But $\sigma_{\eta_1,0,0}=e^{\eta'}$ and $\eta'=\eta_1\xi_1^1\in \C$. Hence $\sigma\sim_\beta e^{\eta'}$, for $\beta=\alpha'\ast\alpha$.
\epf

%We are looking for a $\alpha'\in U(\Bq_{\bq}^{\ast})$, $H$-linear, such that $\alpha'\rightharpoonup \sigma=e^{\eta}$, with $\eta\in\overline{\C}$ and $\sigma$ as in Theorem \ref{thm:sigma-a2}. The next step is extend $\alpha'$ to $\alpha=\alpha'\#\eps$ as Proposition \ref{pro:alpha-extendido}. So the problem comes down to finding $\alpha'$. Note that as $e^{\eta}$ and $\sigma$ are Hopf cocycles, then it is sufficient find $\alpha'$ such that $\alpha'\rightharpoonup\sigma(x_i,b)=e^{\eta}(x_i,b)$, $i\in \I_2,\,b\in\mathbb{B_\qb}$, see Corollary \ref{cor:primeralinea}. However, the Lemma \ref{lem:alpha-sigma=eta} and Lemma \ref{lem:converse} solve this problem,

The next theorem characterizes the cocycles that are co-homologous to an exponential map of a Hochschild cocycle.

\begin{theorem}\label{thm:sigma-pure}
Let $\sigma=\sigma_{\bs\lambda}$ be a Hopf 2-cocycle as in Theorem \ref{thm:sigma-a2}, $\sigma\neq\eps$. Then the following are equivalent:
\begin{enumerate}[label=(\alph*)]
    \item $\sigma\sim e^\eta$ for some $\eta\in \Z^2(\Bq_{\bq},\k)^H$.
    \item There is a unique no null $\lambda\in\{\lambda_1,\lambda_2,\lambda_{12}\}$.
    \item $\sigma\in \ex(\Z^2(\Bq_{\bq},\k)^H)$%\ex(\overline{\C})$.
\end{enumerate}
In particular, any $\sigma=\sigma_{\lambda_1,\lambda_2,\lambda_{12}}$ with at least two nonzero parameters is pure.
\end{theorem}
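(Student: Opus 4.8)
The plan is to establish the cycle of implications $(b)\Rightarrow(c)\Rightarrow(a)\Rightarrow(b)$. The implication $(c)\Rightarrow(a)$ is immediate: if $\sigma=e^\eta$ then $\sigma\sim e^\eta$ via $\alpha=\eps$. The final assertion of the theorem is then the contrapositive of the equivalence $(a)\Leftrightarrow(b)$: for $\sigma\neq\eps$, failing to have a unique nonzero parameter means having at least two, and by $(a)\Leftrightarrow(b)$ such a $\sigma$ is not cohomologous to any exponential, i.e.\ is pure.

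For $(a)\Rightarrow(b)$ I would argue as follows. Suppose $\sigma\sim e^\eta$ with $\eta\in\Z^2(\Bq_{\bq},\k)^H$. By Lemma \ref{lem:eta11} I may normalize $\eta(1,1)=0$, and since $\Z^2(\Bq_{\bq},\k)^H\subseteq\Z^2(\Bq_{\bq},\k)^\Gamma$, Lemma \ref{lem:generadores-eta-abeliano} lets me write $\eta=\eta_1\xi_1^1+\eta_2\xi_2^2+\eta_{121}\xi_{121}^2+\eta_{212}\xi_{212}^1$ as in \eqref{eq:eta-abeliano}. As $e^\eta$ is cohomologous to the Hopf cocycle $\sigma$, it is itself a Hopf cocycle, so Lemma \ref{lem:converse} forces $\eta\in\overline{\C}$, i.e.\ the relations \eqref{eq:eta-de-hopf} hold. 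Now Lemma \ref{lem:alpha-sigma=eta} supplies $\lambda_1=\eta_1$, $\lambda_2=\eta_2$ and $\eta_{121}+\eta_{212}=\lambda_{12}+2q_{12}\lambda_1\lambda_2$ from \eqref{eq:eta-de-alpha}. Substituting these into \eqref{eq:eta-de-hopf} yields
\[
\lambda_1\lambda_2=0,\qquad \lambda_1\bigl(\lambda_{12}+2q_{12}\lambda_1\lambda_2\bigr)=0,\qquad \lambda_2\bigl(\lambda_{12}+2q_{12}\lambda_1\lambda_2\bigr)=0.
\]
The first relation kills the term $2q_{12}\lambda_1\lambda_2$ in the other two, so these collapse to $\lambda_1\lambda_{12}=0$ and $\lambda_2\lambda_{12}=0$. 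Hence all pairwise products of $\lambda_1,\lambda_2,\lambda_{12}$ vanish, so at most one parameter is nonzero; as $\sigma\neq\eps$, exactly one is, which is $(b)$.

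For $(b)\Rightarrow(c)$ I would exhibit $\sigma$ explicitly as an exponential in each of the three cases, with candidates
\[
\sigma_{(\lambda_1,0,0)}=e^{\lambda_1\xi_1^1},\qquad \sigma_{(0,\lambda_2,0)}=e^{\lambda_2\xi_2^2},\qquad \sigma_{(0,0,\lambda_{12})}=e^{\lambda_{12}\xi_{212}^1}.
\]
Each exponent lies in $\C$, so Proposition \ref{pro:exponencial-abeliano} guarantees the right-hand sides are Hopf cocycles. To identify them with the table of Theorem \ref{thm:sigma-a2} it suffices, by Corollary \ref{cor:primeralinea}(a), to compare both sides on the generator rows $\sigma(x_i,b)$; and by the truncation formula of Remark \ref{rem:minimo} these rows equal $\eta(x_i,b)$, which I read off directly from the definitions \eqref{eqn:etabi} and \eqref{eqn:xi-212}. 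A point to watch is that in the $\lambda_{12}$-case one must use $\xi_{212}^1$ rather than $\xi_{121}^2$: the latter would place the value $\lambda_{12}$ in the entry $\sigma(x_2,x_{12}x_1)$, which vanishes in the table when $\lambda_1\lambda_2=0$, whereas $\xi_{212}^1$ correctly feeds $\lambda_{12}$ into $\sigma(x_1,x_2x_{12})$. The two choices differ by the cobordism \eqref{eqn:cobordism} and so give cohomologous, but not equal, cocycles.

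The bulk of the difficulty has already been absorbed into Lemmas \ref{lem:converse} and \ref{lem:alpha-sigma=eta}, so the remaining obstacle is essentially bookkeeping: ensuring that the normalization and the form \eqref{eq:eta-abeliano} of $\eta$ are compatible with the hypotheses of those lemmas, and verifying the generator-row computations in $(b)\Rightarrow(c)$ carefully enough to justify the equalities through Corollary \ref{cor:primeralinea}. If one prefers, Lemma \ref{lem:c-eta-enough} can be invoked in $(a)\Rightarrow(b)$ to reduce at once to $\eta\in\C$, slightly shortening the case analysis, but the direct substitution into \eqref{eq:eta-de-hopf} displayed above bypasses even that reduction.
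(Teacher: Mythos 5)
Your proof is correct and takes essentially the same route as the paper: the same cycle of implications, with $(a)\Rightarrow(b)$ resting on Lemmas \ref{lem:converse} and \ref{lem:alpha-sigma=eta}, and $(b)\Rightarrow(c)$ established through the same explicit identities $\sigma_{(\lambda,0,0)}=e^{\lambda\xi_1^1}$, $\sigma_{(0,\lambda,0)}=e^{\lambda\xi_2^2}$, $\sigma_{(0,0,\lambda)}=e^{\lambda\xi_{212}^1}$. The only (cosmetic) deviation is that you substitute \eqref{eq:eta-de-alpha} directly into the $\overline{\C}$-relations \eqref{eq:eta-de-hopf} rather than first reducing to $\eta\in\C$ via Lemma \ref{lem:c-eta-enough} and invoking \eqref{eq:eta-Ceta}, as the paper does; both computations yield $\lambda_1\lambda_2=\lambda_1\lambda_{12}=\lambda_2\lambda_{12}=0$ and hence the same conclusion.
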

\pf
First, notice that we can assume $\eta\in \C$, by Lemma \ref{lem:c-eta-enough}.

$(a)\Rightarrow (b)$.
Combining Lemma \ref{lem:alpha-sigma=eta} with equations \eqref{eq:eta-Ceta}, we obtain:
\[
\lambda_1\lambda_2=0,\qquad\lambda_1\lambda_{12}=\lambda_2\lambda_{12}=0.
\]
%Now, if $\eta=\eta_i\xi_i^{i}$, then $\lambda_i=\lambda_{12}=0$, $i\in \I_2$. On the other hand, if $\eta=\eta_{121}\xi_{121}^2+\eta_{212}\xi_{212}^1$, then $\lambda_1=\lambda_2=0$. Thus, in all cases, there is a unique no null $\lambda\in\{\lambda_1,\lambda_2,\lambda_{12}\}$.

$(b)\Rightarrow (c)$. By looking at the tables in Theorem \ref{thm:sigma-a2} and in the proof of Lemma \ref{lem:converse}, we see that   this implication is clear as $\sigma_{(\lambda,0,0)}=e^{\lambda\xi_1^1}$. Similarly, $\sigma_{(0,\lambda,0)}=e^{\lambda\xi_2^2}$. Finally, if $\eta=\lambda\xi_{212}^1$, it follows from Lemma \ref{lem:alpha-sigma=eta} that $\sigma_{(0,0,\lambda)}=e^\eta$, as $\alpha=\eps$ therein.

$(c)\Rightarrow (a)$. Clear.
\epf

\begin{remark}
In the setting of Lemma \ref{lem:alpha-sigma=eta}, we see that if $e^\eta\sim \eps$, then $\eta_1=\eta_2=0$ and $\eta_{121}=-\eta_{212}$. In other words, $\eta\in \B^2(B,\k)$ as it is a scalar multiple of $\beta\coloneqq\xi_{121}^2-\xi_{212}^1$, see \eqref{eqn:cobordism}. Conversely, $e^\beta$ is a Hopf cocycle by Proposition \ref{pro:exponencial-abeliano} and Lemma \ref{lem:alpha-sigma=eta} shows that $e^\beta\sim\sigma_{(0,0,0)}=\eps$.
	
We remark that $e^\beta\neq \eps$, as shown in the table in the proof of Lemma \ref{lem:converse}.
	%\item Set $i\in\I_2$. It is easy to see that $\beta\ast\xi_i=\xi_i\ast\beta$. Hence, $e^{\xi_i+\beta}=e^{\xi_i}\ast e^{\beta}$. 
	%However, $e^{\xi_i+\beta}\not\sim e^{\xi_i}$, as< .
	%\end{itemize}
\end{remark}

%\begin{remark}\label{rem:expo-not-B2}
%Notice that  $\eta=\xi_{212}^1\in\B^2(\Bq_{\bq},\k)$ by Lemma \ref{lem:bxi}, as $x_1x_2x_{12}\neq 0$.
%However $e^\eta=\sigma_{(0,0,1)}\not\sim \eps$.
%That is, we see that if $\eta\in\B^2(A,\k)$, we do not necessarily have $e^\eta\sim \eps$.
%\end{remark}	
%
%\input{fk3p}
%
%\input{fk3c}
%
%


\begin{thebibliography}{CGW}
%
%\bibitem[A+]{AAGMV} {\sc N. Andruskiewitsch}, {\sc I. Angiono}, {\sc A. Garc\'ia Iglesias}, {\sc A. Masuoka},
%{\sc C. Vay}. \emph{Lifting via cocycle
%	deformation}. J. Pure Appl. Alg. {\bf 218} (4), 684--703 (2014).

\bibitem[AG]{AG}  {\sc N. Andruskiewitsch}, {\sc A. Garc\'ia Iglesias}, 
\emph{Twisting Hopf algebras from cocycle deformations}. 
Ann. Univ. Ferrara Sez. VII Sci. Mat. 63 (2017), no.~2, 221--247.


%\bibitem[AS1]{AS1} {\sc Andruskiewitsch, N.}, {\sc Schneider, H.J.}, \emph{Lifting of
%	quantum linear spaces and pointed Hopf algebras of order $p^3$},
%J. Algebra \textbf{209}, 658--691 (1998).
%
\bibitem[AS1]{AS1} {\sc Andruskiewitsch, N.}, {\sc Schneider, H.J.},
\emph{Pointed Hopf algebras}, ``New directions in Hopf algebras'',
New directions in Hopf algebras, 1--68, Math. Sci. Res. Inst. Publ., 43, Cambridge Univ. Press, Cambridge, 2002.

\bibitem[AS2]{AS}  \bysame,
\emph{On the classification of finite-dimensional pointed Hopf algebras}, 
Ann. of Math. (2) 171 (2010), no.~1, 375--417. 


\bibitem[AV]{AV} {\sc Andruskiewitsch, N.}, {\sc Vay, C.}, 
\emph{On a family of Hopf algebras of dimension 72}, 
Bull. Belg. Math. Soc. Simon Stevin 19 (2012), no.~3, 415--443.


\bibitem[An1]{An1} {\sc Angiono, I.},
\emph{On Nichols algebras of diagonal type.}, 
J. Reine Angew. Math. 683 (2013), 189--251. 

\bibitem[An2]{An2} \bysame,
\emph{A presentation by generators and relations of Nichols algebras of diagonal type and convex orders on root systems}, 
J. Eur. Math. Soc. (JEMS) 17 (2015), no. 10, 2643--2671.

 

\bibitem[AnG]{AnG} {\sc Angiono, I.,  Garc\'ia Iglesias, A.},
\emph{Liftings of Nichols algebras of diagonal type II. All liftings are cocycle deformations}. 
Selecta Math. 25, no.~1, Paper No. 5, 95 pp. (2019).


\bibitem[B]{book-cohom} {\sc Brown K.S.},
\emph{Cohomology of Groups}
(Graduate Texts in Mathematics) - Springer-Verlag Berlin and Heidelberg GmbH \& Co. (1982).


\bibitem[CGW]{CGW} {\sc C\v{a}ld\v{a}raru, A, Giaquinto, A., Witherspoon, S.},
\emph{Algebraic deformations arising from orbifolds with discrete torsion.},
J. Pure Appl. Algebra 187 (2004), no.~1-3, 51--70.

%
%\bibitem[DT86]{DT86} {\sc Y. Doi}, {\sc M. Takeuchi}, \emph{Cleft comodule algebras for a bialgebra}, Comm. Alg. \textbf{14}, 801--818, (1986).
%
\bibitem[D]{D}
 {\sc Doi, Y.} \emph{Equivalent crossed products for a Hopf algebra}. Comm. Algebra 17 (1989), no. 12, 3053--3085.
%\bibitem[D89]{D89} {\sc Y. Doi}, \emph{Equivalent crossed products for a Hopf algebra}, Comm. Alg. \textbf{17}, 3053--3085, (1989).

%\bibitem[DT1]{DT1} {\sc Doi, Y.}, {\sc Takeuchi, M.},
%{\it Hopf-Galois extensions of algebras, the Miyashita-Ulbrich action and Azumaya
%	algebras}, J. Algebra {\bf 121}, 488--516, (1989).
%
\bibitem[DT]{DT}  {\sc Doi, Y.}, {\sc Takeuchi, M.},
{\it Multiplication alteration by two-cocycles - the quantum	version},
Comm. Algebra 22 (1994), no.~14, 5715--5732.


%\bibitem[FG]{FG} {\sc Fantino, F.}, {\sc Garc\'ia, G. A.}, 
%{\it On pointed Hopf
%	algebras over dihedral groups}, Pacific J. Math. {\bf 252}, 69--91,  (2011).

\bibitem[FGM]{FGM} {\sc Fantino, F., García, G., Mastnak, M.}, 
{\it On finite-dimensional copointed Hopf algebras over dihedral groups}.
 J. Pure Appl. Algebra 223 (2019), no.~8, 3611--3634.

%\bibitem[GGI]{GGI} {\sc Garc\'ia, G. A.}, {\sc Garc\'ia Iglesias, A.},
%{\it Pointed Hopf algebras over $\s_4$}. Israel J. Math. {\bf 183},
%417--444, (2011).

\bibitem[GaM]{GaM} {\sc Garc\'ia, G. A.}, {\sc Mastnak, M.},
{\it Deformation by cocycles of pointed Hopf algebras over non-abelian groups},
Math. Res. Lett. {\bf 22} (2015), 59--92.

\bibitem[GIM]{GIM} {\sc Garc\'ia Iglesias, A.}, {\sc Mombelli, M.},
{\it Representations of the category of modules over pointed Hopf algebras over $\s_3$	and $\s_4$}, 
Pacific J. Math. 252 (2011), no.~2, 343--378.


\bibitem[GS]{GS} {\sc Garc\'ia Iglesias, A.}, {\sc S\'anchez, J.},
{\it Hopf cocycles associated to pointed and copointed deformations over $\s_3$}, 
in progress.

\bibitem[GV1]{GIV1} {\sc Garc\'ia Iglesias, A.}, {\sc Vay, C.},
{\it Finite-dimensional Pointed or Copointed Hopf algebras over affine racks}, 
J. Algebra 397 (2014), 379--406.

\bibitem[GV2]{GIV2} \bysame,
\textsl{Copointed Hopf algebras over $\s_4$}.
J. Pure Appl. Algebra 222 (2018), no.~9, 2784--2809.

\bibitem[GrM]{GrM} {\sc Grunelfelder, L.}, {\sc Mastnak, M.}, 
\emph{Pointed and copointed Hopf algebras as cocycle deformations}, 
\texttt{arXiv:0709.0120} (2007).

%\bibitem[Gu]{G} {\sc G\"unther,R.}, {\it Crossed products for pointed Hopf algebras}.
%Comm.  Algebra, \textbf{27}, 4389--4410,  (1999).
%
\bibitem[H]{H} {\sc Heckenberger, I.},
\emph{Classification of arithmetic root	systems}. 
Adv. Math. 220 (2009), no.~1, 59--124.

\bibitem[MW]{MW}  {\sc Mastnak, M.} {\sc Witherspoon, S.},
\emph{Bialgebra cohomology, pointed Hopf algebras, and deformations.} 
J. Pure Appl. Algebra 213 (2009), no.~7, 1399--1417



%
%\bibitem[Ma1]{M1} {\sc Masuoka, A.}, \emph{On Hopf algebras with cocommutative
%	coradicals},
%J. Algebra \textbf{22}, 451--466, (1991).
%
%\bibitem[Ma2]{M0} \bysame, \emph{Defending the negated Kaplansky conjecture},
%Proc. Amer. Math. Soc. \textbf{129} 3185--3192, (2001).
%
%\bibitem[Ma3]{M3} \bysame,
%\emph{The fundamental correspondences in super affine groups and super formal groups},
%J. Pure Appl. Algebra {\bf 202}, 284--312, (2005).
%
\bibitem[Ma]{Ma}  {\sc Masuoka, A.},
\emph{ Abelian and non-abelian second cohomologies of quantized enveloping algebras.}
J. Algebra 320 (2008), no.~1, 1--47.


\bibitem[Mo]{Mo} {\sc Montgomery, S.}, 
\emph{Hopf algebras and their action on rings},
CBMS Regional Conference Series \textbf{82} (1993).

%\bibitem[S]{S} {\sc P. Schauenburg}, {\it Hopf bi-Galois extensions}. Comm. Alg.
%\textbf{24}, 3797--3825, (1996).

\bibitem[S]{S} {\sc S\'anchez, J.}, 
{\it Álgebras de Hopf y deformaciones por cociclo}. 
Tesis de Licenciatura. Universidad Nacional de Formosa, Argentina, (2020).


\bibitem[Sc]{Sch} {\sc Schauenburg, P.}, 
{\it Hopf-Galois and bi-Galois extensions}. 
Galois theory, Hopf algebras, and semiabelian categories, 469--515, Fields Inst. Commun., 43, Amer. Math. Soc., Providence, RI, 2004. 

\bibitem[St]{St} {\sc \c{S}tefan, D.},
\emph{Hochschild cohomology on Hopf Galois extensions.} 
J. Pure Appl. Algebra 103 (1995), no.~2, 221--233. 


%
%\bibitem[T1]{T1} {\sc Takeuchi, M.},
%{\it Free Hopf algebras generated by coalgebras}, J. Math. Soc. Japan {\bf
%	22}, 561--582, (1971).
%
%\bibitem[T2]{T2} \bysame, {\it Quotient spaces for Hopf algebras}, Comm. Algebra {\bf
%	22}, 2503--2523, (1994).

\end{thebibliography}
\end{document}